\DeclareMathOperator{\lastBorn}{\ell}
\DeclareMathOperator{\choosePath}{c}
\DeclareMathOperator{\nextB}{\textsc{next-b}}
\DeclareMathOperator{\nextT}{\textsc{next-t}}
\DeclareMathOperator{\im}{im}
\DeclareMathOperator{\interior}{^{\circ}}
\DeclareMathOperator{\desc}{\prec}
\DeclareMathOperator{\adj}{\curvearrowright}
\DeclareMathOperator{\Rl}{R}
\DeclareMathOperator{\notRl}{\cancel{\Rl}}
\DeclareMathOperator{\Gr}{\Gamma}
\DeclareMathOperator{\Xspan}{X-span}
\DeclareMathOperator{\Yspan}{Y-span}
\DeclareMathOperator{\score}{s}
\DeclareMathOperator{\antiscore}{\overline{s}}
\newcommand{\sm}{\setminus}
\setlist[itemize]{noitemsep, topsep=0pt}
\newtheorem{theorem}{Theorem}[section]
\newtheorem{lemma}[theorem]{Lemma}
\newtheorem{definition}[theorem]{Definition}
\newtheorem{statement}{Statement}
\title{Burling graphs revisited, part I:\\New characterizations}
\author{Pegah Pournajafi\thanks{Univ Lyon, EnsL, UCBL, CNRS, LIP,
    F-69342, LYON Cedex 07, France. Partially supported by the LABEX
    MILYON (ANR-10-LABX-0070) of Universit\'e de Lyon, within the
    program ‘‘Investissements d'Avenir’’ (ANR-11-IDEX-0007) operated
    by the French National Research Agency (ANR) and by Agence
    Nationale de la Recherche (France) under research grant ANR
    DIGRAPHS ANR-19-CE48-0013-01.}~~and Nicolas Trotignon\footnotemark[1]}
\date{\today}
\begin{document}
	
       \maketitle
	
       \begin{abstract}
         The Burling sequence is a sequence of triangle-free graphs of
         increasing chromatic number. Each of them is isomorphic to
         the intersection graph of a set of axis-parallel boxes in
         $\mathbb{R}^3$. These graphs were also proved to have other
         geometrical representations: intersection graphs of line
         segments in the plane, and intersection graphs of frames,
         where a frame is the boundary of an axis-aligned rectangle in
         the plane.

         We call Burling graph every graph that is an induced subgraph
         of some graph in the Burling sequence.  We give five new
         equivalent ways to define Burling graphs. Three of them are
         geometrical, one is of a more graph-theoretical flavor, and
         one, that we call abstract Burling graphs, is more axiomatic.
       \end{abstract}
       
\providecommand{\keywords}[1]
{
  \small	
  {\textit{Keywords:}} #1
}

       \keywords{Burling graphs, intersection graphs of geometric objects}
       
       \section{Introduction}

       Graphs in this paper have neither loops nor multiple edges. In
       this introduction they are non-oriented, but oriented graphs
       will be considered in the rest of the paper.  A class of graphs
       is \emph{hereditary} if it is closed under taking induced
       subgraphs.  A \emph{triangle} in a graph is a set of three
       pairwise adjacent vertices, and a graph is \emph{triangle-free}
       if it contains no triangle.  The \emph{intersection graph} of
       sets $S_1, \dots, S_n$ is defined as follows: the vertices are
       the sets, and for $ i \neq j $, $ S_i $ is connected to $ S_j $
       by an edge if and only if $S_i \cap S_j \neq \varnothing$.

       \subsection*{Burling graphs}
       
       In 1965, Burling~\cite{Burling65} proved that triangle-free
       intersection graphs of axis-aligned boxes in $\mathbb{R}^3$
       have unbounded chromatic number. The work of Burling uses
       purely geometric terminology. However, one may rephrase it into
       a more modern graph theoretic setting and define in a
       combinatorial way a sequence of triangle-free graphs with
       increasing chromatic number, and prove that each of them is
       isomorphic to the intersection graph of a set of axis-parallel
       boxes in $\mathbb{R}^3$.  The definition of this sequence is
       recalled in Section~\ref{sec:BequalD}, and we call it the
       \emph{Burling sequence}.

       It was later proved that every graph in the Burling sequence is
       isomorphic to the intersection graph of various geometrical
       objects.  In a work of Pawlik, Kozik, Krawczyk, Lason, Micek,
       Trotter and Walczak~\cite{Pawlik2012Jctb}, the objects under
       consideration are line segments in the plane.  The intersection
       graphs of line segments in the plane are called \emph{line
         segment graphs}.  In fact, the Burling sequence was
       rediscovered in~\cite{Pawlik2012Jctb}, and the way it is
       presented in recent works is inspired by this paper.

       In works of Pawlik, Kozik, Krawczyk, Lason, Micek, Trotter and
       Walczak~\cite{DBLP:journals/dcg/PawlikKKLMTW13}, Krawczyk,
       Pawlik and Walczak~\cite{Krawczyk2015}, and later Chalopin,
       Esperet, Li and Ossona de Mendez~\cite{Chalopin2014}, the
       objects under consideration are frames, where a \emph{frame} is
       the boundary of an axis-aligned rectangle in the plane. In
       fact, a stronger result is given in~\cite{Krawczyk2015}: it is
       proved that every graph of the Burling sequence is a
       \emph{restricted frame graph}, meaning that the frames satisfy
       several constraints that we recall in Section~\ref{sec:geom}.

       The Burling sequence also attracted attention lately because it
       is a good source of examples of graphs of high chromatic number
       in some hereditary classes of graphs that are not defined
       geometrically, but by excluding several patterns as induced
       subgraphs.  Most notably, it is proved in~\cite{Pawlik2012Jctb}
       that they provide a counter-example to a well-studied
       conjecture of Scott, see~\cite{Scott18} for a survey.
        
       Since graphs of the Burling sequence appear in the context of
       hereditary classes of graphs, it is natural to define \emph{Burling
         graphs} as graphs that are induced subgraphs of some graph in
       the Burling sequence.  Observe that Burling graphs trivially
       form a hereditary class (in fact, the smallest hereditary class
       that contains the Burling sequence).  The goal of this work is
       a better understanding of Burling graphs.

       \subsection*{New geometrical characterizations}
       
       In this first part, we give three new characterizations of
       Burling graphs: as intersection graphs of frames, as
       intersection graphs of line segments in the plane and as
       intersection graphs of boxes of the 3-dimensional space.  The
       new feature of our characterizations is that they provide
       the following equivalences.  We put some restrictions on the geometrical
       objects, to obtain what we call \emph{strict frame graphs},
       \emph{strict line segment graphs} and \emph{strict box graphs}.
       The precise definitions are given in Section~\ref{sec:geom}.
       We then prove that a graph $G$ is a Burling graph \emph{if and
         only if} it is a strict frame graph (resp.\ a strict line
       segment graph, a strict box graph).

       Observe that in~\cite{Pawlik2012Jctb} (resp.\
       \cite{Chalopin2014,DBLP:journals/dcg/PawlikKKLMTW13,Pawlik2012Jctb}),
       it is proved that every Burling graph is a strict line segment
       graph (resp.\ a strict frame graph). The proofs there are
       implicit because the authors of these works do not mention our
       new restrictions, but it is straightforward to check that their
       way to embed Burling graphs in the plane satisfies them. Note
       that as pointed out by an anonymous referee, strict frame
       graphs are implicitly defined in~\cite{Krawczyk2015} as
       intersection graphs of clean directed families of frames
       avoiding some configurations.  Our contribution is the
       definition of the new restrictions on the geometrical objects
       and the converse statements: every strict frame graph and every
       strict line segment graph is a Burling graph.  Examples of line
       segment graphs that are not Burling graphs are already given
       in~\cite{Chalopin2014}. We go further by providing examples of
       restricted frame graphs that are not Burling graphs, showing
       that our new restrictions are necessary, see
       Figures~\ref{pic:k5sd-rfg}, \ref{pic:necklace-rfg}
       and~\ref{pic:wheel-rfg}.  Note that proving that the examples
       are not Burling graphs is non-trivial and postponed to the
       second part of this work~\cite{Part2}.

       \subsection*{Combinatorial characterizations}
       
       The definition of Burling graphs as induced subgraphs of graphs
       in the Burling sequence is not very easy to handle, at least for
       us. So, to prove the equivalence between Burling graphs and our
       geometrical constructions, we have to introduce two other new
       equivalent definitions of Burling graphs.  The first one,
       called \emph{derived graphs}, is purely combinatorial: we see
       how every Burling graph can be \emph{derived} from some tree
       structure using several simple rules (and we prove that only
       Burling graphs are obtained).  Derived graphs are defined in
       Section~\ref{section:Derived-graphs} and their equivalence with
       Burling graphs is proved in Section~\ref{sec:BequalD}.  In fact,
       derived graphs have a natural orientation that is very useful
       to consider, so they are defined as oriented graphs.
       Then, in Section~\ref{sec:abs} we prove that derived graphs can
       be defined as graphs obtained from a set with two relations
       satisfying a small number of axioms, again with simple rules.
       We call these \emph{abstract Burling graphs}.

       The advantage of this approach is that derived graphs seem to
       be specific and well structured, while abstract Burling graphs
       seem to be general (though they are equivalent).  As a
       consequence, derived graphs turn out to be useful to study the
       structure of Burling graphs, and this will be mostly done in
       the second part of this work~\cite{Part2}.  On the other hand, since abstract Burling
       graphs are ``general'', it is easy to check that geometrical
       objects satisfy the axioms in their definition.  So the proof
       that every graph arising from one of our geometrical
       characterizations is an abstract Burling graph, and therefore a
       Burling graph, is not too long. This is done in
       Section~\ref{sec:geom}.  Moreover, abstract Burling graphs
       might be of use to prove other geometrical or combinatorial
       characterizations of Burling graphs.

\begin{figure}[h!] \vspace*{-.1cm}  
	\centering
	\begin{tikzcd}[row sep=1.5cm, column sep=1cm] 
	&
	\parbox{5cm}{\centering Strict line segment graphs \\ Section 6} 
	\arrow[rd, hook, sloped, "Theorem \ 6.12"] \\
	\parbox{2.5cm}{\centering { \ } \\ Burling graphs \\ Section 4}
	\arrow[ru, hook, sloped, "{[4]}\  (See\  Lemma \ 6.11.)"]
	\arrow[r, hook, shift left, "Theorem \ 4.9"]
	\arrow[rd, hook,  sloped, "{[3]}\  (See\  Lemma \ 6.5.)"]
	& \parbox{3cm}{\centering Derived graphs \\ Section 3}
	\arrow[l, hook, shift left]
	\arrow[r, hook, shift left, sloped, "Theorem \ 5.7"]
	&
	\parbox{3cm}{\centering Abstract\\ Burling graphs \\ Section 5}
	\arrow[l, hook, shift left] \\
	&
	\parbox{3.5cm}{\centering Strict frame graphs \\ Section 6}
	\arrow[ru, hook, sloped, "Theorem \ 6.6"]
	\arrow[d, hook, shift left, "Theorem \ 6.15"] \\
	& 
	\parbox{3cm}{\centering Strict box graphs \\ Section 6}
	\arrow[u, hook, shift left] \\
	\end{tikzcd} \vspace*{-1.5cm}
	\caption{The six equivalent classes of graphs in this
		article. Each arrow shows an inclusion and the label of the
		arrow shows where the proof may be found.}  \label{fig:summary-partI}
\end{figure}
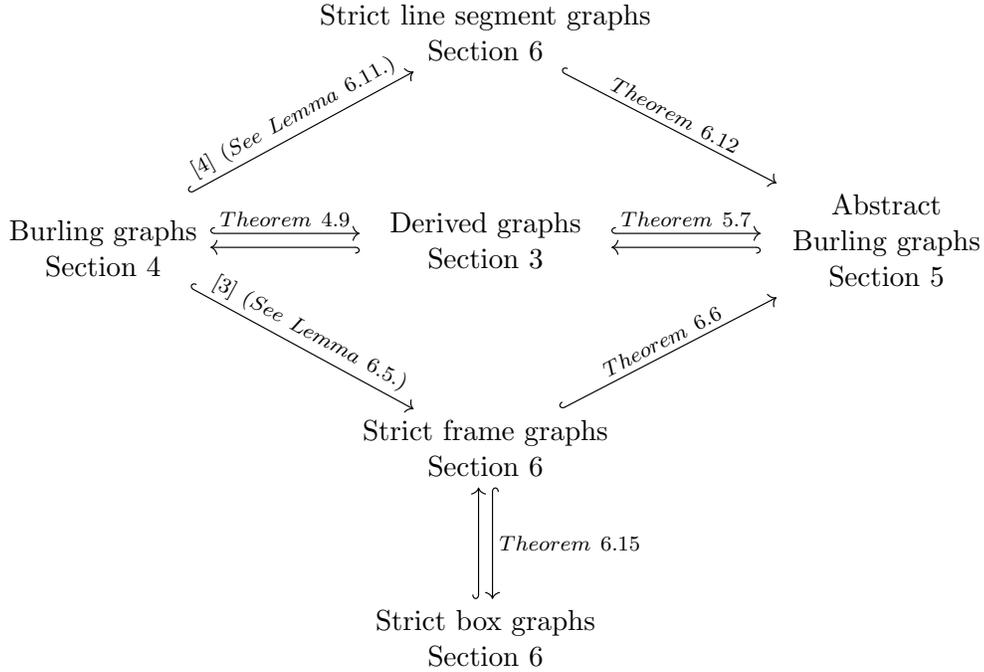

       \subsection*{Sum up}
       
       To sum up, we have now six different ways to define Burling
       graphs: as induced subgraphs of graphs in the classical Burling
       sequence, as derived graphs, as abstract Burling graphs, as
       strict frame graphs, as strict line segment graphs and as strict
       box graphs.  In Figure~\ref{fig:summary-partI}, we sum up where
       the different steps of the proofs of the equivalence between
       all classes can be found.

       The second~\cite{Part2} and third~\cite{Part3} parts of this work 
       are about the structure of Burling graphs and are motivated by the
       chromatic number in hereditary classes of graphs.  For
       instance, we prove a decomposition theorem for oriented Burling
       graphs and study under what conditions simple operations such
       as gluing along a clique, subdividing an edge or contracting an
       edge preserve being a Burling graph. We prove that no
       subdivision of $K_5$ is a Burling graph. Moreover, we classify as
       Burling or not Burling many series-parallel graphs. 

       We also prove that no wheel is a Burling graph, where a wheel
       is a graph made of a chordless cycle and a vertex with at least
       three neighbors in the cycle.  This last result was already
       known by Scott and Seymour (personal communication, 2017).
       Very recently, Davies rediscovered this independently and
       published a proof~\cite{davies2021trianglefree}.  Some of the
       results of Part 2 and Part 3 appeared in the master thesis of the first author, see~\cite{report}.

	\section{Notation}
        \label{sec:notation}

	There is a difficulty regarding notations in this paper. The graphs we
	are interested in will be defined from trees. More specifically, a
	tree $T$ is considered and a graph $G$ is \emph{derived} from it,
	following some rules defined in the next section.  We have $V(G) = V(T)$
	but $E(G)$ and $E(T)$ are different (disjoint, in fact). Also,
	even if we are originally motivated by non-oriented graphs, it turns
	out that $G$ has a natural orientation, and considering this orientation is
	essential in many of our proofs.
	
	So, in many situations, we have to deal simultaneously with the tree,
	the oriented graph derived from it and the underlying graph of this
	oriented graph. A last difficulty is that since we are interested in
	hereditary classes, we allow removing vertices from $G$. However, we have
	to keep all vertices of $T$ to study $G$ because of the so-called
	\emph{shadow vertices}: the vertices of $T$ that are not in $G$, which nevertheless capture
	essential structural properties of $G$.  All this will become clearer in
	the next section. For now, it explains why we need to be very careful
	about the notation that is mostly classical, see~\cite{BondyMurty}.

	\subsection*{Notation for trees}
	
	A \emph{tree} is a graph $T$ such that for every pair of vertices
	$u, v\in V(T)$, there exists a unique path from $u$ to $v$.  A
	\emph{rooted tree} is a pair $(T, r)$ such that $T$ is a tree and
	$r\in V(T)$.  The vertex $r$ is called the \emph{root} of $(T, r)$.
	Often, the rooted tree $(T, r)$ is abusively referred to as $T$, in
	particular when $r$ is clear from the context.
	
	In a rooted tree, each vertex $v$ except the root has a unique
	\emph{parent} which is the neighbor of $v$ in the unique path from the
	root to $v$. We denote the parent of $ v $ by $ p(v)$. 
	If $u$ is the parent of $v$, then $v$ is a \emph{child}
	of $u$.  A \emph{leaf} of a rooted tree is a vertex that has no children.
	Note that every tree has at least one leaf. We denote by $L(T)$ the
	set of all leaves of $T$.
	
	A \emph{branch} in a rooted tree is a path $ v_1 v_2 \dots v_k $ such
	that for each $ i \in \{1, \dots, k-1\} $, the vertex~$ v_i$ is the parent of $v_{i+1}$.  This
	branch \emph{starts} at $v_1$ and \emph{finishes} at $v_k$.  A branch
	that starts at the root and finishes at a leaf is a \emph{principal}
	branch.  Note that every rooted tree has at least one principal
	branch.
	
	If $T$ is a rooted tree, the \emph{descendants} of a vertex $v$ are
	all the vertices that are on a branch starting at $v$. The
	\emph{ancestors} of $v$ are the vertices on the unique path from $v$
	to the root of~$T$.  Notice that a vertex is a descendant and an
	ancestor of itself. Any descendant of a vertex $ v$, other than itself, is called a \emph{proper descendant} of $ v $.

	It is classical to orient the edges of a rooted tree (from the root,
	or sometimes to the root), but to avoid any confusion with the
	oriented graph derived from a tree, we will not use any of these
	orientations here. Also, we will no more use words such as
	\emph{neighbors}, \emph{adjacent}, \emph{path}, etc.\ for trees. Only
	\emph{parent}, \emph{child}, \emph{branch}, \emph{descendant} and
	\emph{ancestor} will be used.

	\subsection*{Notation for graphs and oriented graphs}
	
	By \emph{graph}, we mean a non-oriented graph with no loops and no
	multiple edges.  By \emph{oriented graph}, we mean a graph whose edges
	(called \emph{arcs}) are all oriented, and no arc is oriented in
	both directions.  When $G$ is a graph or an oriented graph, we denote
	by $V(G)$ its vertex set.  We denote by $E(G)$ the set of edges of a
	graph $G$ and by $A(G)$ the set of arcs of an oriented graph $ G $.  When
	$u$ and $v$ are vertices, we use the same notation $uv$ to denote an
	edge and an arc.  However, observe that the arc $uv$ is different from the arc $vu$,
	while the edge $uv$ is equal to the edge $vu$.
	
	For an oriented graph $ G $, its \emph{underlying graph} is the graph
	$H$ such that $V(H) = V(G)$ and for all $u, v\in V(H)$, $uv\in E(H)$
	if and only if $uv\in A(G)$ or $vu\in A(G)$.  We then also say that
	$G$ is an \emph{orientation of $H$}.  When there is no risk of
	confusion, we often use the same letter to denote an oriented
	graph and its underlying graph.
	
	In the context of oriented graphs, we use the words
	\emph{in-neighbor}, \emph{out-neighbor}, \emph{in-degree},
	\emph{out-degree}, \emph{sink} and \emph{source} with their classical meaning.
	Terms from the non-oriented realm, such as \emph{degree},
	\emph{neighbor}, \emph{isolated vertex} or \emph{connected component},
	when applied to an oriented graph, implicitly apply to its underlying
	graph.

	\subsection*{Notation for binary relations}
	
	Let $ S $ be a set, and let $ \Rl $ be a binary relation on $
        S $. We write $ x \Rl y $ for $ (x,y) \in \Rl $, and
        $ x \notRl y $ for $ (x,y) \notin \Rl $. For an element
        $ s\in S $, we denote by $ [s \Rl] $ the set
        $ \{t \in S : s \Rl t \} $.

        The relation $ \Rl $ is \emph{asymmetric} if for all
        $x,y \in S $, $ x \Rl y $ implies $ y \notRl x $, and it is
        \emph{transitive} if for all $x,y,z \in S$, $ x \Rl y $ and
        $ y \Rl z$  implies $x \Rl z $. The relation $ \Rl $
        is a \emph{strict partial order} if it is asymmetric and transitive.
%
%

	A \emph{directed cycle} in $ \Rl $ is a set of elements
        $ x_1, x_2, \dots, x_n $, with $ n \in \mathbb{N} $, such that
        $ x_1\Rl x_2$, $ x_2 \Rl x_3 $, $\dots$, $ x_n \Rl x_1 $. Note
        that when we deal with relations, we allow cycles on one or
        two elements. So, strict partial orders do not have
        directed cycles. In fact, a relation $\Rl $ has no directed
        cycles if and only if its transitive closure is a strict
        partial order.
	
	An element $ s \in S $ is said to be a \emph{minimal element}
        with respect to $ \Rl $ if there exists no element
        $ t \in S \sm \{s\} $ such that $ t \Rl s $. Notice that if a
        relation $ \Rl $ on a finite set $ S $ has no directed cycle,
        then $ S $ necessarily has a minimal element with respect to
        $ \Rl $.

        \section{Derived graphs}
        \label{section:Derived-graphs}
	
	In this section, we introduce the class of \emph{derived
          graphs} and study some of their basic properties.  A
        \emph{Burling tree} is a 4-tuple
        $(T,r, \lastBorn, \choosePath)$ in which:
	
	\begin{enumerate}[label=(\roman*)] 
		\item $ T $ is a rooted tree and $ r $ is its root,
		\item $\lastBorn$ is a function associating to each vertex $v$ of $T$
		which is not a leaf, one child of $v$ which is called the
		\emph{last-born} of $v$,
		\item $\choosePath$ is a function defined on the vertices of $ T $. If
		$ v $ is a non-last-born vertex in $ T $ other than the root, then
		$ \choosePath $ associates to $ v $ the vertex-set of a (possibly empty)
		branch in $T$ starting at the last-born of $p(v)$. If $v$ is a
		last-born or the root of $T$, then we define
		$ \choosePath(v) = \varnothing $. We call $ \choosePath $ the \textit{choose
			function} of $ T $.
	\end{enumerate}
	
	By abuse of notation, we often use $T$ to denote the 4-tuple.

	The oriented graph $G$ \emph{fully derived} from the Burling tree $T$
	is the oriented graph whose vertex-set is $V(T)$ and
	$uv \in A(G) $ if and only if $v$ is a vertex in $\choosePath(u)$.  A
	non-oriented graph $G$ is \emph{fully derived} from $T$ if it is the
	underlying graph of the oriented graph fully derived from $T$.
	
	A graph (resp.\ oriented graph) $G$ is \emph{derived} from a Burling
	tree $T$ if it is an induced subgraph of a graph (resp.\ oriented
	graph) fully derived from $T$. The oriented or non-oriented graph
	$G$ is called a \emph{derived graph} if there exists a Burling
	tree $T$ such that $G$ is derived from $T$.
	
	Observe that if the root of $T$ is in $V(G)$, then it is an isolated
	vertex of~$G$.  Observe that a last-born vertex of~$T$ that is in $G$
	is a sink of $G$. 
	
	Let us give some examples. In all figures in this paper, the tree~$T$ is represented with black edges while
        the arcs of $G$ are represented in red. The last-born of a
        vertex of $T$ is presented as its rightmost child. Moreover, \emph{shadow
          vertices}, the vertices of $T$ that are not in $G$,
        are represented in white.
	
	\begin{figure}
          \begin{center}
            \includegraphics[width=8cm]{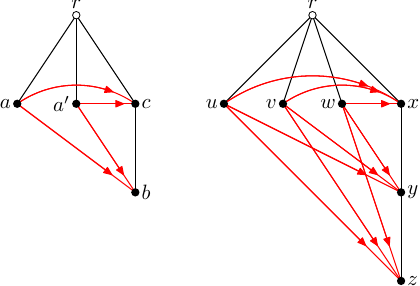}
            \caption{Complete bipartite graphs seen as derived
              graphs\label{f:square}}
          \end{center}
	\end{figure}
	
	On the first graph represented in Figure~\ref{f:square},
        $\choosePath(x) = \choosePath(y) = \{z,w\} $.  It shows that
        at least one orientation of $C_4$ is a derived graph,
        so that $C_4$ is a derived graph.  The second graph shows that
        $K_{3, 3}$ is a derived graph, and it is easy to generalize
        this construction to $K_{n, m}$ for all integers
        $n, m \geq 1$. In both graphs, the vertex $ r $ of $ T $ is
        not a vertex of $ G $. Figure \ref{f:c6} is a presentation of
        $ C_6 $ as a derived graph. Notice that, in this presentation,
        $ v $ is a shadow vertex.
	
	\begin{figure}
		\begin{center}
			\includegraphics[width=5cm]{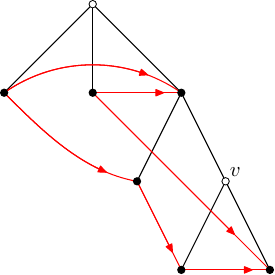}
			\caption{Cycle of length 6 seen as a derived graph\label{f:c6}}
		\end{center}
	\end{figure}
	
%
	Notice that if a graph $G$ is derived from $T$, the branches of $T$,
	restricted to the vertices of $G$, are stable sets of $G$. In
	particular, no edge of $T$ is an edge of $G$.

	Let $G$ be an oriented graph derived from a Burling tree $T$. A
	vertex~$v$ in~$G$ is a \emph{top-left} vertex if its distance in $T$
	to the root of $T$ is minimum among all vertices of $G$, and one of
	the followings holds:
	
	\begin{enumerate}[label=(\roman*)]
		\item $v$ is not a last-born,
		\item $ v$ is a last-born and every vertex of $ G $ whose distance in $ T $ to the root is minimum is also a last-born. 
	\end{enumerate}
	
%
%
	There might be more than one top-left 
	vertex in
	a graph. For example, in the first graph of Figure \ref{f:square},
	both vertices $ x $ and $ y $ are top-left vertices.

	\begin{lemma}
		\label{lem:top-left-exists}
		Every non-empty oriented graph $G$ derived from a Burling tree
		$(T, r, \lastBorn, \choosePath)$ contains at least one top-left
		vertex and every such vertex is a source
		of $G$. Moreover, the neighborhood of a top-left
		vertex is a stable set.
	\end{lemma}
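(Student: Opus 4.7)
The plan is to handle the three assertions (existence of a top-left vertex, source property, stability of the neighborhood) separately, all organized around the min-distance set $M = \{v\in V(G) : \mathrm{dist}_T(v,r) = d\}$, where $d = \min_{v\in V(G)}\mathrm{dist}_T(v,r)$. The definition of top-left is stated exactly so that these three facts should read off the elementary interplay between $\lastBorn$ and $\choosePath$.

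For existence, I would first check whether $M$ contains any non-last-born vertex $v$: if so, $v$ meets clause (i) and is top-left. Otherwise every element of $M$ is a last-born, and the goal is to show $|M|=1$ so that its unique element meets clause (ii). To this end, if $v,v'$ were distinct last-borns in $M$, their parents $p(v)\neq p(v')$ would both lie at distance $d-1$ from $r$ (hence be shadow vertices); descending from their least common ancestor $w$ toward $v$ and toward $v'$ through two distinct children of $w$, of which at most one is $\lastBorn(w)$, I would try to use the other child to exhibit a vertex of $G$ strictly closer to $r$ than $d$, contradicting minimality. Making this contradiction explicit is the step I expect to be the main obstacle, since it is the only place where the structural constraints of the Burling-tree definition are really used.

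The source property is the cleanest part. Let $v$ be top-left and suppose $uv\in A(G)$; by definition of derived arcs, $v\in\choosePath(u)$, so $v$ lies on the branch of $T$ starting at $\lastBorn(p(u))$, and in particular is a descendant of $\lastBorn(p(u))$. Since $\lastBorn(p(u))$ is at the same distance from $r$ as $u$, one gets $\mathrm{dist}_T(v,r)\geq \mathrm{dist}_T(u,r)\geq d$, and $v\in M$ forces both inequalities to be equalities. The first equality pushes $v$ up to $\lastBorn(p(u))$, so $v$ is a last-born and clause (i) fails; the second gives $u\in M\setminus\{v\}$, so clause (ii) fails. This contradicts $v$ being top-left, so no such $u$ exists and $v$ is a source of $G$.

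For the neighborhood claim, note that by the previous step the neighborhood of a top-left $v$ equals its out-neighborhood $\choosePath(v)\cap V(G)$. If $v$ satisfies (ii), then $v$ is a last-born so $\choosePath(v)=\varnothing$ and the neighborhood is empty. If $v$ satisfies (i), then $\choosePath(v)$ is (the vertex set of) a branch of $T$; appealing to the observation recalled just before the lemma — that every branch of $T$, intersected with $V(G)$, is a stable set of $G$ — shows that $\choosePath(v)\cap V(G)$, and hence the neighborhood of $v$, is stable.
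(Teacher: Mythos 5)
Your treatment of the second and third assertions is correct and is essentially the paper's own argument: from $uv\in A(G)$ one gets $v\in\choosePath(u)$, hence $v$ is a descendant of a sibling of $u$, hence $d(v)\geq d(u)\geq d$; minimality forces equalities, which make $v$ the last-born $\lastBorn(p(u))$ and make $u$ a second vertex of $G$ at depth $d$, killing both clauses of the definition; and then $N(v)=N^+(v)\subseteq\choosePath(v)$ lies on a branch of $T$ and is therefore stable. No complaints there.

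The genuine gap is exactly where you suspected it, in the existence claim, and it is worse than a missing detail: the statement you set out to prove --- that if every minimum-depth vertex of $G$ is a last-born then there is only one such vertex --- is false. Take $T$ with root $r$ having children $a$ and $b=\lastBorn(r)$, give $a$ children $a_1$ and $a_2=\lastBorn(a)$, give $b$ children $b_1$ and $b_2=\lastBorn(b)$, let all choose-sets be empty, and let $V(G)=\{a_2,b_2\}$. Then $M=\{a_2,b_2\}$ consists of two distinct last-borns, so neither clause (i) nor clause (ii) holds for either vertex. Your plan of descending from the least common ancestor $w$ through a non-last-born child to reach a vertex of $G$ closer to the root cannot succeed, because every vertex at depth strictly less than $d$ is, by the very definition of $d$, a shadow vertex; the children of $w$ are at depth at most $d-1$ and so are never in $V(G)$. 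You should also know that the paper's own proof of existence is the single sentence ``By the definition of top-left vertex, it exists in $G$,'' which does not address this case either; read literally, the existence assertion of the lemma fails on the example above. (The damage is contained: in that degenerate case each vertex of $M$ is an isolated vertex of $G$, hence still a source with stable neighborhood, so the induction in Lemma~\ref{lem:DG-no-cycle-no-triangle} goes through. But the existence of a top-left vertex in the stated sense cannot be proved by your route, or indeed at all without modifying the definition or the statement.)
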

	
	\begin{proof}
		By the definition of top-left vertex, it exists in $ G $. Let $v$ be
		a top-left vertex of $G$. Suppose for the sake of contradiction that
		$uv \in A(G)$ for some vertex $u \in V(G)$. Thus $v$ is a vertex in
		$\choosePath(u)$. Denote by $ d(x) $ the distance in $ T $ of a
		vertex $ x $ to $r $. The fact that $ v \in \choosePath(u) $ means
		that $v$ is a descendant of a brother of $u$, and therefore
		$ d(v) \geq d(u) $. Since $v$ is a vertex that minimizes the
		distance to the root, we must have $ d(v)=d(u) $, and in particular
		$ p(v) = p(u) $. Notice that $ u $ and $ v $ cannot both be last-born. 
		On the other hand, $v$ is a last-born because
		$u$ cannot be connected to one of its non-last-born brothers. This
		contradicts the definition of a top-left vertex. So $N(v) =
		N^+(v)$. It follows that $N(v)$ is included in a branch of $T$, and
		is therefore a stable set.
%
	\end{proof}
	
	\begin{lemma}
		\label{lem:DG-no-cycle-no-triangle}
		An oriented derived graph has no directed cycles and its underlying graph has no triangles.
	\end{lemma}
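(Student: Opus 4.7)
The plan is to derive both non-existence statements from Lemma~\ref{lem:top-left-exists} applied to carefully chosen induced subgraphs. The key preliminary observation is that if $G$ is derived from a Burling tree $T$, then every induced subgraph of $G$ is also derived from $T$: this is immediate from the definition, since an induced subgraph of an induced subgraph of the fully derived graph is again an induced subgraph of it. Consequently every non-empty induced subgraph of $G$ has a top-left vertex, which is a source and whose neighborhood is a stable set.

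For the no-directed-cycle part, I would argue by contradiction: suppose $u_1, u_2, \dots, u_k$ is a directed cycle of $G$ and let $H$ be the oriented subgraph of $G$ induced by $\{u_1, \dots, u_k\}$. Then $H$ is derived from $T$ and non-empty, so Lemma~\ref{lem:top-left-exists} supplies a top-left vertex $v$ that is a source of $H$. But in $H$ every vertex has in-degree at least one (from its predecessor on the cycle), so $H$ has no source — contradiction.

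For the no-triangle part, I would likewise suppose $\{a,b,c\}$ induces a triangle in $G$ and consider the subgraph $H$ induced on $\{a,b,c\}$, which is again derived from $T$. By Lemma~\ref{lem:top-left-exists}, $H$ contains a top-left vertex $v \in \{a,b,c\}$ whose neighborhood in $H$ is stable. But the two other vertices of the triangle both belong to $N_H(v)$ and are themselves adjacent, contradicting stability.

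The substantive work — the existence of top-left vertices, the fact that they are sources, and that their neighborhoods are stable — has already been done in Lemma~\ref{lem:top-left-exists}. The main (and only) thing to be careful about here is the heredity claim for derived graphs, which is unpacked above but genuinely immediate from the definition; once that is in hand, both parts of the lemma are two-line applications of the top-left principle to a minimal witness of the forbidden configuration. I do not anticipate a real obstacle.
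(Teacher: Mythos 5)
Your proof is correct and rests on exactly the same two ingredients as the paper's: the heredity of derived graphs and Lemma~\ref{lem:top-left-exists}. The paper phrases it as an induction (peel off a top-left vertex, which is a source with stable neighborhood), whereas you apply the lemma directly to the induced subgraph on a putative cycle or triangle; this is the same argument in minimal-counterexample form.
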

	
	\begin{proof}
		Adding a source whose neighborhood is a stable set to an oriented
		graph with no directed cycle and no triangle does not create a
		triangle or a directed cycle. Since every induced subgraph of a
		derived graph is a derived graph, the statement follows from Lemma
		\ref{lem:top-left-exists} by a trivial induction.
	\end{proof}

	Suppose that $(T, r, \lastBorn, \choosePath)$ is a Burling tree, $u$
	is a non-leaf vertex of $T$ and $v$ is its last-born.  Suppose that
	$b$ is a non-last-born child of $u$. Consider the tree $T’$ obtained
	from $T$ by removing the edges $uv$ and $ ub $, and adding a vertex $w$ adjacent to
	$u$, $v$ and $b$.  Define $\lastBorn'(u) = w$, $\lastBorn'(w) = v$ and
	$\lastBorn'(z)=\lastBorn(z)$ for all non-leaf vertices $z$ of
	$T\sm \{u\}$.  Define $\choosePath'(z) = \choosePath(z)\cup \{w\}$ for
	every vertex $z\in V(T)\setminus\{b\}$ such that $v \in \choosePath(z)$ or
	$b\in \choosePath(z)$, define $\choosePath'(w)=\varnothing$,
        and define $\choosePath'(z)=\choosePath(z)$
	otherwise. See Figure~\ref{f:kill3}.
	
	\begin{definition}
		\label{d:subdivB}
		The Burling tree $(T', r', \lastBorn', \choosePath’)$ defined above
		is said to be obtained from $(T, r, \lastBorn, \choosePath)$ by
		sliding $b$ into $uv$ (note that the definition requires that $v$ is
		a last-born).
	\end{definition}

	\begin{figure}[t]
		\begin{center}
			\includegraphics[width=10cm]{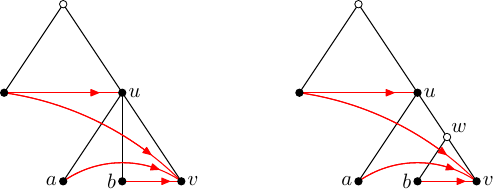}
			\caption{Sliding $b$ into $uv$}
			\label{f:kill3}
		\end{center}
	\end{figure}

	\begin{lemma}
		\label{l:slide}
		If $(T', r', \lastBorn', \choosePath’)$ is obtained from
		$(T, r, \lastBorn, \choosePath)$ by sliding a vertex into an edge,
		then any oriented graph derived from
		$(T, r, \lastBorn, \choosePath)$ can be derived from
		$(T', r', \lastBorn', \choosePath’)$.
	\end{lemma}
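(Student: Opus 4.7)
The plan is to show that the oriented graph $G_T$ fully derived from $T$ coincides with the subgraph of the oriented graph $G_{T'}$ fully derived from $T'$ induced by $V(T) \subseteq V(T')$. Once this is established, the lemma follows at once: any oriented graph $G$ derived from $T$ is, by definition, an induced subgraph of $G_T$ on some vertex set $V(G) \subseteq V(T)$; by the identity above $G$ is then equally an induced subgraph of $G_{T'}$ on $V(G) \subseteq V(T')$, so $G$ is derived from $T'$. Note that $w$, the only vertex of $T'$ not in $V(T)$, will play the role of a shadow vertex.

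To establish the identity I would proceed in two steps. First, I would verify that $(T', r, \lastBorn', \choosePath')$ is a genuine Burling tree, i.e., that for every non-last-born, non-root vertex $z$ of $T'$ the set $\choosePath'(z)$ is the vertex set of a (possibly empty) branch of $T'$ starting at $\lastBorn'(p'(z))$, together with $\choosePath'(w)=\varnothing$ since $w=\lastBorn'(u)$. This reduces to a short case analysis: if $\choosePath(z)$ avoids both $v$ and $b$, the branch sits entirely in the unchanged part of the tree and remains a branch of $T'$; if $\choosePath(z)$ passes through $v$ (respectively through $b$), adjoining $w$ amounts to inserting this new vertex between $u$ and $v$ (respectively between $u$ and $b$), producing again a valid descending path in $T'$. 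The non-last-born children of $u$ in $T$ distinct from $b$ are the only vertices whose relevant last-born changes (from $v$ in $T$ to $w$ in $T'$), and for them prepending $w$ to $\choosePath(z)$, which is either empty or starts at $v$, yields exactly a branch starting at $w$ in $T'$. Second, I would check the key identity $\choosePath'(x) \cap V(T) = \choosePath(x)$ for every $x \in V(T)$. This is immediate from the definition of $\choosePath'$: the two sets differ by at most the inclusion of $w$, and $w \notin V(T)$. Since an arc $xy$ with $x,y \in V(T)$ lies in $G_T$ (respectively in $G_{T'}$) precisely when $y \in \choosePath(x)$ (respectively $y \in \choosePath'(x)$), this yields the desired equality of induced subgraphs.

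The main obstacle is the case analysis in the first step, in particular the handling of the vertex $b$, whose parent changes from $u$ in $T$ to $w$ in $T'$: here one must recognize $\choosePath'(b)$ as a branch of $T'$ starting at $\lastBorn'(w)=v$, which coincides with $\choosePath(b)$ regarded as a branch starting at $\lastBorn(u)=v$ in $T$, since the subtree rooted at $v$ is unchanged. Beyond this point the argument is essentially bookkeeping that captures the intuitive picture: $w$ is a new shadow vertex inserted into $T$ so that every arc of the derived graph is preserved.
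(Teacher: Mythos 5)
Your proposal is correct and takes essentially the same route as the paper, whose entire proof is the observation that $\choosePath$ is the restriction of $\choosePath'$ to $V(G)$ (your second step); since $w\notin V(T)$, the induced subgraph of the fully derived graph of $T'$ on $V(T)$ coincides with the fully derived graph of $T$. Your first step, checking that $T'$ is a genuine Burling tree (including the slightly delicate case of the vertex $b$, whose parent changes), is extra bookkeeping that the paper leaves implicit in Definition~\ref{d:subdivB} rather than a different argument.
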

	
	\begin{proof}
		Let $G$ be derived from $T$. The statement follows directly from the
		fact that the function $\choosePath$ is the restriction of
		$\choosePath’$ to $V(G)$.
	\end{proof}

	The next lemma shows that all derived graphs can be derived from
	Burling trees with specific properties. This will reduce the technical difficulty of
 some proofs.

	\begin{lemma}
		\label{l:chooseT}
		Every oriented derived graph $G$ can be derived from a Burling tree
		$ (T, r, \lastBorn, \choosePath) $ such that:
		\begin{enumerate}[label=(\roman*)]
			\item $ r $ is not in $V(G)$, \label{item:1-in-lem-chooseT}
			\item every non-leaf vertex in $T$ has exactly two children,
			\item no last-born of $T$ is in $V(G)$.
		\end{enumerate}
	\end{lemma}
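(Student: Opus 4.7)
The plan is to start from an arbitrary Burling tree $(T^{(0)}, r^{(0)}, \lastBorn^{(0)}, \choosePath^{(0)})$ from which $G$ is derived, and perform three successive modifications of it, each preserving the fact that $G$ is derived and each preserving the conditions already achieved.

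First, I would enforce (i). If $r^{(0)} \in V(G)$, then (as observed just after the definition of derived graph) $r^{(0)}$ is isolated in $G$. I would introduce a new root $r$ with a single additional shadow child $w$ declared to be its last-born, so that $r^{(0)}$ becomes a non-last-born child of $r$ with empty $\choosePath$-value. Neither $r$ nor $w$ is in $V(G)$ and no existing $\choosePath$-value changes, so $G$ is still derived from the new tree.

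Next, I would enforce (ii). For every internal vertex $u$ with $k \geq 3$ children, I would repeatedly apply the slide operation of Definition~\ref{d:subdivB}, each time sliding a non-last-born child into the edge joining $u$ to its current last-born; by Lemma~\ref{l:slide}, derivability is preserved at each step. After $k-2$ slides the vertex has exactly two children. For every internal vertex with a single child, I would add a new shadow non-last-born sibling with empty $\choosePath$-value, which contributes no arc of the derived graph. The root is untouched, and a vertex that was a last-born in $V(G)$ remains a last-born (of possibly a new shadow vertex).

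Finally, I would enforce (iii) iteratively. While some last-born $v \in V(G)$ remains, let $u = p(v)$ and modify the tree by replacing the edge $uv$ with a path $u$--$v^*$--$v$, where $v^*$ is a new shadow vertex, and by adding another shadow leaf $w$ as a child of $v^*$; declare $v^*$ the new last-born of $u$ and $w$ the last-born of $v^*$, so that $v$ becomes a non-last-born. Update $\choosePath$ by inserting $v^*$ immediately before $v$ in every branch of the form $\choosePath(z)$ that passes through $v$, and set $\choosePath(v) = \choosePath(v^*) = \choosePath(w) = \varnothing$. Each iteration strictly decreases the count of last-borns in $V(G)$ while only introducing shadow last-borns, so the process terminates. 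Condition (ii) is preserved because the number of children of $u$ is unchanged and $v^*$ has exactly two children; condition (i) is preserved because the root is never touched.

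The main technical obstacle is the bookkeeping in the last step: one has to verify that, for every $z$ with $v \in \choosePath^{(0)}(z)$, the updated set (with $v^*$ inserted just before $v$) is indeed a branch in the new tree, and that the arc set of $G$ on $V(G)$ is thereby unchanged because $v^* \notin V(G)$. Once this is checked, termination and the simultaneous preservation of all three conditions follow without further difficulty.
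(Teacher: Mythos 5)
Your proposal is correct and follows essentially the same route as the paper: fix the root with a new shadow root, normalize the arity using the sliding operation of Definition~\ref{d:subdivB} (via Lemma~\ref{l:slide}) together with added shadow children, and then subdivide the edge above each last-born of $V(G)$ with a shadow vertex while inserting that vertex into every $\choosePath$-branch through it. The only (immaterial) deviation is that in the first step you make the old root a non-last-born child of the new root, whereas the paper makes it the last-born; both preserve derivability since the old root is isolated in $G$.
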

	
	\begin{proof}
		We apply a series of transformations on
		$ (T, r, \lastBorn, \choosePath) $ until the conclusion is
		satisfied.
		
		\textit{First transformation:} If $ r \in V(G) $,
                build a tree $T'$ by adding to $T$ a new vertex $r'$
                adjacent to $r$.  Define $\lastBorn'(r') = r $ and
                $\lastBorn'(v)=\lastBorn(v)$ for all vertices $v$ of
                $T$. Moreover set $ \choosePath'(r') = \varnothing $,
                and do not change the choose function on the rest of
                the vertices. Notice that $ r $ is an isolated vertex
                in $ G $, thus $G$ can be derived from
                $(T', r', \lastBorn', \choosePath')$.

		\textit{Second transformation:} Suppose that $ u $ is a non-leaf
		vertex of $ T $ which has only one child. Build a tree $ T' $ by
		adding a new child $ v $ to $u $ and define
		$ \choosePath'(v) = \varnothing $. Notice that $ v $ is a leaf, so
		it does not have a last-born in $ T' $. The graph $ G $ is also
		derived from $ T ' $. Apply this process until that every non-leaf
		vertex in $ G $ has at least two children.

		\textit{Third transformation:} Suppose that $u$ is a vertex in $T$
		with at least three children, let $v$ be the last-born of $u$ and
		$a, b$ be two distinct children of $u$ other than $ v $. We define a
		Burling tree $T'$ by sliding $b$ into the edge $uv$, and observe
		that the degree of $u$ in $T'$ is smaller than in $T$. And by
		Lemma~\ref{l:slide}, $G$ can be derived from $T'$. We apply the
		transformation until all vertices have at most two children.
		
		Notice that during this process we decrease the number of children
		of $ u $, the new vertex $w$ has two children, and we do not
		increase the number of children of any other vertex. Hence the
		process terminates if we apply the transformation until conclusion
		(ii) of the lemma is satisfied.
		
		Moreover, notice that in applying the third transformation on a vertex
		$ u $, we do not decrease the number of children of any vertex other
		than $ u $, and once again the new vertex that we create has two
		children. Thus, after the third transformation, we do not undo the
		effect of the second transformation.
		
		Notice that after the second and the third transformations, Property
		\ref{item:1-in-lem-chooseT} of the lemma remains satisfied.
		
		\textit{Fourth transformation:} If $v$ is a last-born of $T$ that is
		in $V(G)$, then let $u$ be the parent of $v$. Observe that
		$\choosePath(v)=\varnothing$.  We build a tree $T'$ by removing the
		edge $uv$, adding a new vertex $w$ adjacent to $u$ and $v$, and a
		new vertex $x$ adjacent to $w$. Define $\lastBorn'(u) = w$,
		$\lastBorn'(w) = x$ and $\lastBorn'(y)=\lastBorn(y)$ for all
		non-leaf vertices $y$ of $T\sm u$.  Define
		$\choosePath'(y) = \choosePath(y)\cup \{w\}$ for every vertex
		$y\in V(T)$ such that $v \in \choosePath(y)$ and
		$\choosePath'(y)=\choosePath(y)$ otherwise.  We see that $G$ can be
		derived from $(T', r', \lastBorn', c')$, and $ v $ is not a
		last-born in $ T' $, so we have reduced the number of last-borns of
		the Burling tree in $ V(G) $. Apply this transformation until there
		is no last-born of the Burling tree in $ V(G)$. See
		Figure~\ref{f:killlb}.
		\begin{figure}[t]
			\begin{center}
				\includegraphics[width=10cm]{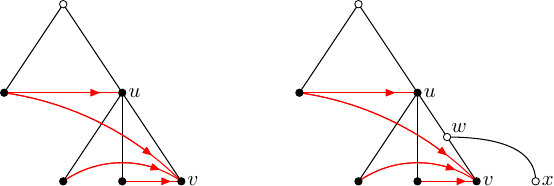}
				\caption{Turning $v$ into a non-last-born}
				\label{f:killlb}
			\end{center}
		\end{figure}  
		
		Finally, notice that this transformation does not cancel the effect
		of the previous ones.  This completes the proof of the lemma.
	\end{proof}

	\section{Equality of Burling graphs and derived graphs}
        \label{sec:BequalD}
	
        In this section, we recall the classical definition of the
        Burling sequence and prove that derived graphs and Burling graphs form the same class. 
		
		
	\subsection*{Burling graphs}
	
	There are different equivalent approaches to define
        Burling graphs. See~\cite{Burling65} for the first definition
        by Burling, or \cite{Pawlik2012Jctb} for a
        second definition. The definition that we
        use here is the one from~\cite{Chalopin2014} (see Appendix~B
        of~\cite{Chalopin2014}).
	
	\begin{definition}
          \label{def:BG}
          Let $(G, \mathcal S)$ be a pair where $G$ is a graph and
          $\mathcal S$ is a set of stables sets of~$G$. We define a
          function $ \nextB $ associating to a pair $(G, \mathcal S)$
          another pair~$ (G', \mathcal S') $ as follows:
          \begin{enumerate}[label=(\roman*)]
          \item Take a copy of $G$. 
          \item For each stable set $S \in \mathcal S$, take a new
            copy of $G$ and denote it by $G_{\mathcal{S}}$. Note that
            the same set of stable sets as $ \mathcal S $ exists in
            $G_{\mathcal{S}}$. Denote it by~$ \mathcal S_S $.			
          \item For each $ S \in \mathcal S $ and
            $ Q \in \mathcal{S}_S $, add a new vertex $v_{S,Q}$
            adjacent to all vertices in $ Q $.
          \item Denote by $G'$ the obtained
            graph	
          \item Consider every stable set of the form $ S \cup Q $ and
            $ S \cup \{v_{S,Q} \}$ where $ S \in \mathcal{S} $ and
            $ Q \in \mathcal S_S $. Call $\mathcal S'$ the set of all
            these stable sets.
          \end{enumerate}
		
          The pair $(G', \mathcal S')$ is defined to be
          $\nextB(G, \mathcal
          S)$.
	\end{definition}
	
	Starting with $(G_1, S_1)$ where $G_1=K_1$ and
        $\mathcal S_1 = \{V(G_1)\}$ and applying the function $\nextB$
        iteratively, we define a sequence $ (G_k)_{k \geq 1} $ in
        which
        $(G_{k+1}, \mathcal S_{k+1}) = \nextB(G_k, \mathcal
        S_k)$. This sequence is called \emph{the Burling sequence}. In
        Figure~\ref{f:example-Burling-sequence}, the first three
        graphs in this sequence are represented. The edges of the
        graphs are represented in red and the stable sets are
        represented by dashed curves.
	
	\begin{figure}
          \begin{center}
            \vspace*{-2.5cm}
            \includegraphics[width=14cm]{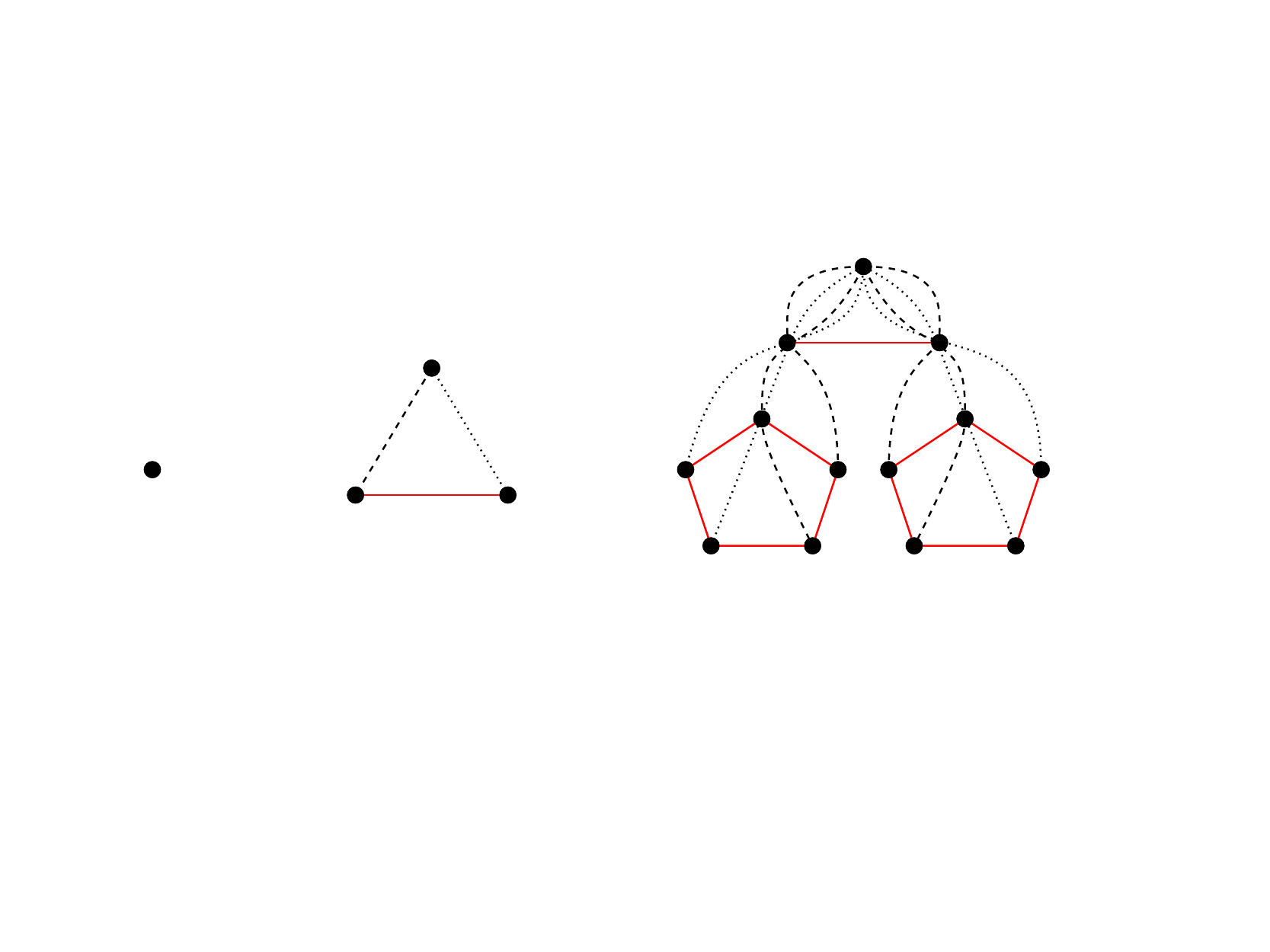}
            \vspace*{-4.5cm}
            \caption{The first three graphs in the Burling sequence}
            \label{f:example-Burling-sequence}
          \end{center}
	\end{figure}

	Notice that a copy of the first graph $ G_1 $, which is a
        single vertex, is present in all the graphs of the sequence,
        and it is an isolated vertex of them. 
	
	The class of \emph{Burling graphs} is the class consisting of
        all graphs in the Burling sequence and their induced
        subgraphs.
	
	Burling proved that the graphs of the Burling sequence have
        unbounded chromatic number. (See Theorem~1
        of~\cite{Pawlik2012Jctb}.) For the sake of completeness, we
        include the sketch of the proof here. Here, a \emph{coloring}
        of a graph is a function that assigns to each vertex a
        color, in such a way that adjacent vertices receive different
        colors. By induction, we prove the following statement:
	
	\textit{In every coloring of the vertices of $G_k$, one of the
          stable sets in the family $\mathcal S_k$ receives at least
          $k$ colors.}
	
	This is obvious for $k=1$. Suppose the statement holds for
        some fixed~$k$. Consider a coloring of $ G_{k+1} $. By the
        induction hypothesis, in the first copy of $ G_k $ in
        $ G_{k+1} $, there exists a stable set $ S \in \mathcal S_k $
        which receives at least $k $ colors. Again, by the induction
        hypothesis, in $ G_S $, the copy of $ G_k $ associated to
        $ S $, there exists a stable set $ Q \in \mathcal S_S $
        receiving $k $ colors. Now either the $k $ colors of $ S $ are
        the same as the $ k $ colors of $ Q $, in which case
        $ v_{S,Q} $ has a new color, and therefore
        $ S \cup \{v_{S,Q}\} \in \mathcal S_{k+1} $ receives $ k+1 $
        different colors, or the colors in $ S $ and $ Q $ are
        different, in which case $ S \cup Q \in \mathcal S_{k+1} $
        receives $ k+1 $ different colors. This completes the proof.

	\subsection*{Tree sequence}
	
	Recall that a \emph{principal branch} of a Burling tree
        $ (T, r, \lastBorn, \choosePath) $ is any branch starting in
        its root $ r $ and ending in one of its leaves. The
        \emph{principal set of $(T, r, \lastBorn, \choosePath)$} is
        the set of all vertex-sets of the principal branches of
        $T$. We denote the principal set of $ T $ by
        $ \mathcal{P} (T)$. Notice that there is a one-to-one
        correspondence between $ \mathcal{P} (T)$ and $L(T)$, the set of
        leaves of $ T $.
	
	If a graph $ G $ is derived from a Burling tree $ T $, then
        the restriction of each principal branch of $ T $ to the
        vertices of $ G $, forms a stable set in $ G $. In particular,
        $ \mathcal P(T) $, restricted to $ V(G) $, is a set of stable
        sets of $ G $.
	
	In this section, we define a sequence $ (T_k)_{k \geq 1}$ of
        Burling trees and we prove that the sequence
        $ (T_k, \mathcal P_k)_{k \geq 1} $ of Burling trees and their
        principle sets is in correspondence to the sequence
        $(G_k, \mathcal{S}_k)_{k \geq 1}$ of Burling graphs. More
        precisely, we will show that the $k$-th Burling graph $ G_k $
        is isomorphic to the graph fully derived from $ T_k $, and
        $ S_k $ is the same as $ \mathcal P_k = \mathcal P(T_k) $.
	
	To define the mentioned sequence, we first define a
        function $ \nextT $ on Burling trees.
	
	\begin{definition} \label{def:BurlingTreeNext} Let
          $(T, r, \lastBorn, \choosePath)$ be a Burling tree, and let
          $ \mathcal S $ denote its principal set. We build a Burling
          structure $ (T', r', \lastBorn', \choosePath') $ with
          principal set $ \mathcal S' $ as follows:
		\begin{enumerate}[label=(\roman*)]
                \item Take a copy of
                  $ (T, r, \lastBorn, \choosePath) $.
                \item For each principal branch $ P \in \mathcal{S} $
                  ending in the leaf $ l $, pend a leaf $ l_P $ to
                  $ l $, and define $ \lastBorn(l) = l_P $. Then put a
                  copy $ (T, r, \lastBorn, \choosePath)_P $ on
                  $ l_P $, identifying its root with $ l_P$. Denote
                  the principal set of
                  $ (T, r, \lastBorn, \choosePath)_P $ by
                  $ \mathcal S_P $.
                \item For each copy
                  $ (T, r, \lastBorn, \choosePath)_P $, corresponding
                  to a leaf $ l \in P $, for each
                  $ Q \in \mathcal{S}_P $, add a new leaf $ l_{P,Q} $
                  to $ l $.
                \item to obtain $ c' $, first extend the function
                  $ \choosePath $ naturally to the copies of
                  $ (T, r, \lastBorn, \choosePath) $, and then also
                  define $ \choosePath'(l_{P,Q}) = Q $ for
                  $ P \in \mathcal S $ and $ Q \in \mathcal{S}_P $.
                \item Notice that the result is a Burling tree
                  $ (T', r', \lastBorn', \choosePath') $.
                \item Observe that the principal branches of $ T' $
                  are of the form $ P \cup Q $ or
                  $ P \cup \{l_{P,Q}\} $ for $ P \in \mathcal S $ and
                  $ Q \in \mathcal{S}_P $. Thus
                  $ \mathcal{S}' = \{ P \cup Q, P \cup \{l_{P,Q}\} : P
                  \in \mathcal S, Q \in \mathcal{S}_P \} $.
		\end{enumerate}
		We denote $ (T', r', \lastBorn', \choosePath') $ by
                $ \nextT(T, r, \lastBorn, \choosePath) $. By abuse of
                notation, we may write~$ T'=\nextT(T) $.
	\end{definition}
	
	Starting from $ T_1 $, the one vertex Burling tree, and
        applying the $\nextT$ function iteratively, we reach a
        sequence $ (T_k, r_k, \lastBorn_k, \choosePath_k)_{k \geq 1} $
        of Burling trees that we call \textit{the tree sequence}.
	
	In the rest of this section whenever we use the notation
        $ (G_k, \mathcal S_k) $, we mean the $k$-th graph in the
        Burling sequence and its set of stable sets. Similarly, when
        we write $ (T_k, r_k, \lastBorn_k, \choosePath_k) $, or by
        abuse of notation $ T_k $, we mean the $k$-th Burling tree in
        the tree sequence.

	The next two lemmas are about some properties of the sequence
        $ (T_k)_{k \geq 1} $.
	
	\begin{lemma} \label{lem:Tk-two-children} Let $ v $ be a
          vertex in $ T_k $. If $ v $ is not a leaf, then it has at
          least two children in $ T_k $.
        \end{lemma}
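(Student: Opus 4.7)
The plan is to prove this by induction on $k$. For the base case $k=1$, the tree $T_1$ consists of a single vertex, which is a leaf, so the statement holds vacuously. For the inductive step, assume the property holds for $T_k$, and examine each vertex of $T_{k+1} = \nextT(T_k)$.

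I would partition the vertices of $T_{k+1}$ according to the construction in Definition~\ref{def:BurlingTreeNext}. There are four types: (a) vertices of the original copy of $T_k$ that were non-leaves in $T_k$; (b) vertices of the original copy of $T_k$ that were leaves in $T_k$; (c) vertices of the pendant copies $(T_k)_P$; and (d) the newly added leaves $l_{P,Q}$. For type (a), the inductive hypothesis gives at least two children in $T_k$, and the $\nextT$ operation never removes any child, so at least two children remain. For type (d), these vertices are leaves by construction, so nothing is required. Type (c) splits naturally: non-root vertices of $(T_k)_P$ retain the same children as in $T_k$, so the inductive hypothesis applies directly; and the root of $(T_k)_P$ (identified with $l_P$) has the same children in $T_{k+1}$ as the root of $T_k$ has in $T_k$, which by induction is at least two whenever the root of $T_k$ is not a leaf (and for $k\geq 2$, the root of $T_k$ is never a leaf).

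The main case to handle carefully is type (b): a vertex $l$ that was a leaf of the original copy of $T_k$. Such a vertex receives at least the new child $l_P$, where $P$ is the unique principal branch of $T_k$ ending at $l$. In addition, for every $Q \in \mathcal{S}_P$, a new child $l_{P,Q}$ is attached to $l$. Since $T_k$ has at least one principal branch, the set $\mathcal{S}_P$ is non-empty, so at least one such $l_{P,Q}$ exists. Thus $l$ gains at least two children, as required.

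The only subtlety I anticipate is the case $k=1$ in the inductive step, where the root of $T_1$ coincides with a leaf; but this case is covered by the preceding analysis since the unique vertex $r_1$ of $T_1$ is a leaf of $T_1$ and thus falls under type (b), not type (a). Aside from this bookkeeping, the proof is a direct case analysis matching the construction.
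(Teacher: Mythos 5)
Your proof is correct and follows essentially the same route as the paper: induction on $k$ with a case analysis on where the vertex sits in $T_{k+1}$, the key case being a leaf of the main copy of $T_k$, which gains the child $l_P$ from step~(ii) and at least one child $l_{P,Q}$ from step~(iii) since $\mathcal{S}_P \neq \varnothing$. Your treatment is merely more explicit about the pendant copies and the root identification than the paper's version.
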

	
	\begin{proof}
          We prove the lemma by induction on $ k $. For $ k=1 $, there
          is nothing to prove. Suppose that the statements are true
          for $ T_k $ where $ k \geq 1 $.
		 
          Let $ v $ be a vertex in $ T_{k+1} = \nextT (T_k)$ which is
          not a leaf. The vertex $ v $ appears in one of the copies of
          $ T_k $, and because it is not a leaf, either it is a
          non-leaf vertex of a copy of $ T_k $, and thus it has at
          least 2 children by the induction hypothesis, or it is a
          leaf of the main copy of $ T_k $ in $ T_{k+1} $. But notice
          that as a leaf of the main copy of $ T_k $, in Step~(ii) of
          Definition~\ref{def:BurlingTreeNext}, it receives a child,
          and in Step~(iii) it receives at least one more child. So
          $v $ has at least 2 children in $ T_{k+1}$.
	\end{proof}
	
	\begin{lemma} \label{lem:Tk-nonempty-c} If $ v $ is a
          non-last-born vertex in $T_k $ which is not the root, then
          $ c_k(v) \neq \varnothing $. In particular, the last-born
          brother of $ v $ is in $ c_k(v) $.
	\end{lemma}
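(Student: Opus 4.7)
The plan is to proceed by induction on $k$, following the recursive construction $T_{k+1} = \nextT(T_k)$ from Definition~\ref{def:BurlingTreeNext}. The base case $k=1$ is vacuous because $T_1$ consists of a single vertex, so it has no non-root vertices at all. It suffices to establish the stronger ``in particular'' clause that the last-born brother of $v$ lies in $c_k(v)$, since this automatically gives $c_k(v)\neq\varnothing$.

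For the inductive step, I would take an arbitrary non-root, non-last-born vertex $v$ of $T_{k+1}$ and split into three cases according to where $v$ sits: (a) $v$ is a vertex of the main copy of $T_k$; (b) $v$ is a non-root vertex of one of the attached copies $(T_k)_P$; or (c) $v$ is one of the newly added leaves $l_{P,Q}$. In cases (a) and (b) the key observation is that the parent $p(v)$ in $T_{k+1}$ and the $\lastBorn$ value assigned to $p(v)$ are unchanged from $T_k$ (respectively, from the isolated copy $(T_k)_P$), because $\nextT$ only ever assigns $\lastBorn$ to vertices that previously had none, namely the leaves. Hence $v$ is non-root and non-last-born in $T_k$ (or in the isolated copy), so the induction hypothesis supplies a branch $c_k(v)$ containing the last-born brother of $v$; by the natural extension of $c$ in step~(iv) of Definition~\ref{def:BurlingTreeNext}, this branch equals $c_{k+1}(v)$, and the last-born brother is the same vertex in $T_{k+1}$.

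For case (c), the argument is direct from the definition: if $v = l_{P,Q}$, then the parent of $v$ is the endpoint leaf $l$ of the principal branch $P$ in the main copy, and step~(ii) of Definition~\ref{def:BurlingTreeNext} sets $\lastBorn(l) = l_P$, so the last-born brother of $v$ is exactly $l_P$. Step~(iv) then defines $c_{k+1}(v) = Q$, where $Q$ is a principal branch of the attached copy $(T_k)_P$ and therefore starts at its root $l_P$. Thus $l_P \in c_{k+1}(v)$, as required.

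The main subtlety I anticipate is in case~(a) when $v$ happens to be a leaf of the main copy of $T_k$ but acquires children in $T_{k+1}$ via steps~(ii) and~(iii). Here $v$ itself gains a $\lastBorn$ value for the first time, which could superficially suggest that the induction hypothesis needs extra care; however, the hypothesis constrains only $p(v)$ and the children of $p(v)$, and neither of these is altered by $\nextT$, so the inductive step still goes through cleanly.
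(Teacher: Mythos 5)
Your proof is correct and follows essentially the same route as the paper's: induction on $k$ via the recursive construction $\nextT$, with the vertices of $T_{k+1}$ split into those inherited from a copy of $T_k$ (handled by the induction hypothesis, since $\nextT$ does not change parents or last-born assignments of existing non-leaf vertices) and the new leaves $l_{P,Q}$ (handled directly, since $c_{k+1}(l_{P,Q})=Q\neq\varnothing$). Your treatment is somewhat more detailed than the paper's — in particular you explicitly verify that $Q$ starts at $l_P$, the last-born brother of $l_{P,Q}$ — but the underlying argument is the same.
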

	
	\begin{proof}
          We prove the lemma by induction on $ k $. For $ k=1 $, there
          is nothing to prove. Suppose that the statements are true
          for $ T_k $ where $ k \geq 1 $, and suppose that $ v $ is a
          non-last-born vertex in $ T_{k+1} $ other than its
          root. There are two possibilities:
		
          First, $ v $ is a non-last-born vertex in one of the copies
          of $ T_k $ (either the main copy, or a copy corresponding to
          a principal branch). In this case, the result follows from
          the induction hypothesis.
		
          Second, $ v $ is a vertex of the form $ l_{P,Q} $ as in
          Step~(iii) of Definition~\ref{def:BurlingTreeNext}. Then in
          Step~(iv) we define $ c_{k+1}(v) $ to be $ Q $ which is not
          empty.
	\end{proof}

	\subsection*{Equality of Burling graphs and derived graphs}
	
	We are now ready to prove the equality of Burling
        graphs and derived graphs. 
	
	
	\begin{lemma} \label{lem:Gk-is-derived-from-Tk}
		For every $ k \geq 1$, $ G_k $ is fully derived from $ T_k $, and $ \mathcal S_k $ is $\mathcal P_k = \mathcal{P}(T_k)$.
	\end{lemma}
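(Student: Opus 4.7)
The plan is a straightforward induction on $k$, mirroring the two parallel inductive constructions. The base case $k=1$ is immediate: $T_1$ is a single vertex, so the oriented graph fully derived from $T_1$ has one vertex and no arcs, matching $G_1=K_1$; and the unique principal branch of $T_1$ yields $\mathcal{P}_1=\{V(T_1)\}=\mathcal{S}_1$.

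For the inductive step, I would assume $G_k$ is fully derived from $T_k$ via an identification $V(G_k)=V(T_k)$ under which $\mathcal{S}_k=\mathcal{P}_k$, and then decompose $V(T_{k+1})$ as in Definition~\ref{def:BurlingTreeNext}: (a) the main copy of $T_k$, (b) for each $P\in\mathcal{P}_k$ ending at a leaf $l$, the copy $T_P$ rooted at $l_P=\lastBorn_{k+1}(l)$, and (c) for each such $P$ and each $Q\in\mathcal{P}_P=\mathcal{S}_P$, the new leaf $l_{P,Q}$ attached to $l$. Since $\choosePath_{k+1}$ restricted to the main copy equals $\choosePath_k$, and restricted to any $T_P$ equals the choose function of that copy, and since in both cases the image stays inside the same copy, the induction hypothesis gives that the induced subgraph of the fully derived oriented graph on the main copy, respectively on any $T_P$, is a copy of $G_k$, and no arc goes between two distinct such copies.

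The only remaining arcs involve the new leaves $l_{P,Q}$. By construction $\choosePath_{k+1}(l_{P,Q})=Q\subseteq V(T_P)$, giving $l_{P,Q}$ out-neighborhood $Q$; and a direct inspection of $\choosePath_{k+1}$ on every other vertex shows that no vertex has $l_{P,Q}$ in its image, so $l_{P,Q}$ has no in-neighbors. Under the induction hypothesis, $P$ corresponds to the stable set $S\in\mathcal{S}_k$ and the copy $T_P$ corresponds to the copy $G_S$, so the vertices $l_{P,Q}$ play exactly the role of $v_{S,Q}$ in Definition~\ref{def:BG}: complete to (the copy of) $Q$ in $G_S$ and with no other neighbors. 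This identifies the fully derived graph of $T_{k+1}$ with $G_{k+1}$. For the stable-set half of the lemma, the leaves of $T_{k+1}$ are exactly the leaves of the subtrees $T_P$ and the vertices $l_{P,Q}$ (since every leaf of the main copy acquires at least the child $l_P$), so the principal branches of $T_{k+1}$ are precisely the sets $P\cup Q$ and $P\cup\{l_{P,Q}\}$ for $P\in\mathcal{P}_k$ and $Q\in\mathcal{P}_P$, which matches the description of $\mathcal{S}_{k+1}$ verbatim.

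The main difficulty is bookkeeping rather than conceptual: one has to juggle the bijection $\mathcal{S}_k\leftrightarrow\mathcal{P}_k$ from the induction hypothesis together with its recursive instance inside each copy $T_P$, and verify carefully that the $\lastBorn$ and $\choosePath$ functions of $T_{k+1}$, as extended in Definition~\ref{def:BurlingTreeNext}, produce no spurious arcs and that the branches $Q$ starting at $l_P$ remain valid values of $\choosePath_{k+1}(l_{P,Q})$ once the leaves of the main copy lose their leaf status and correctly receive $l_P$ as last-born.
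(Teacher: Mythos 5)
Your proposal is correct and follows essentially the same induction as the paper's proof: identify the main copy and the copies $(T_k)_P$ with the corresponding copies of $G_k$ via the induction hypothesis, check that the new leaves $l_{P,Q}$ behave exactly as the vertices $v_{S,Q}$ of Definition~\ref{def:BG}, and read off the principal branches of $T_{k+1}$ as the sets $P\cup Q$ and $P\cup\{l_{P,Q}\}$. If anything, you are more explicit than the paper about the absence of arcs between distinct copies and the absence of in-neighbors of $l_{P,Q}$, which are details the paper leaves implicit.
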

	
	\begin{proof}
		We prove the lemma by induction on $ k $. If $ k=1 $, the statement holds. Suppose that $ G_k $ is fully derived from $ T_k $ and $\mathcal S_k $ is equal to $\mathcal{P}_k$. 
		
		To build $ T_{k+1} $, to every leaf $ l $ of $ T_k $, we add a new leaf and we pend a copy of $ T_k $ to this new leaf. Since every leaf in $ T_k $ identifies exactly one of the principal branches, or by the induction hypothesis, one stable set in $ \mathcal S_k $, this step is equivalent to step~(ii) in Definition~\ref{def:BG}. Then for each copy $ (T_k)_P $ of $ T_k $, we add $|\mathcal P_k| = |\mathcal S_k|$ new leaves to the leaf corresponding of the principal branch $ P $. For a new vertex $ l_{P,Q} $ corresponding to the branch $ Q \in \mathcal{P}(T_k)_P $, we define the choose-function to be $ Q  \in (T_k)_P $ which assures that in the graph fully derived from $ T $, this vertex is complete to $ Q $. Thus these new vertices $ l_{P,Q} $ are the vertices $ v_{P,Q} $ that we add in step~(iii) of Definition~\ref{def:BG}, and $ G_{k+1} $ is the graph fully derived from $ T_{k+1} $. 
		
		Finally, we notice that the vertex sets of the principal branches of $ T_{k+1} $ are exactly sets of the form $ P \cup Q $ and $ P \cup \{l_{P,Q}\} $ for $ P \in \mathcal \mathcal P_k = \mathcal S_k$ and $ Q \in (\mathcal P_k)_P =(\mathcal{S}_k)_P$. Thus $ \mathcal S_{k+1} = \mathcal P_{k+1} $. 
	\end{proof}

	\begin{figure}[t]
		\begin{center}
			\includegraphics[width=12cm]{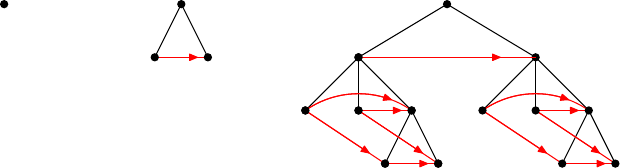}
			\caption{The first three graphs of the Burling sequence seen as fully derived graphs} \label{f:Burling-sequence-as-derived}
		\end{center}
	\end{figure}
	
	Figure~\ref{f:Burling-sequence-as-derived} shows some orientations of the first three graphs of the Burling sequence as fully derived graphs.

	Now we define the notion of \textit{extension} for 
	Burling trees, which is, as we will see formally in Lemma \ref{lem:extension-induced-subgraph}, closely related 
	to the notion of induced subgraph in fully derived graphs. 
	
	\begin{definition} \label{def:monotonic-embedding}
		Let $ (T, r, \lastBorn, \choosePath) $ and $ (T', r', \lastBorn', \choosePath') $ be two Burling trees. We say that $ T' $ is an \emph{extension} of $ T $ if there exists an injection $ \varphi $ from $V(T) $ to $ V(T') $ with the following properties:
		\begin{enumerate}[label=(\roman*)]
			\item $\varphi(r) = r'$,
			\item $\varphi$ preserves ancestors, i.e.\ if $ u $ is an ancestor of $ v $ in $ T $, then $ \varphi(u) $ is an ancestor of $ \varphi(v) $ in $ T' $,
			\item $\varphi$ preserves the last-born vertices, i.e. if $ v \in V(T) $ is a last-born in $ T $, then  $\varphi(v) $ is a last-born in $ T'$.
			\item $\varphi $ preserves the choose-path function on $ T $, i.e.\ for every vertex $ v \in T $, $ \varphi(\choosePath(v)) = \choosePath'(\varphi(v)) \cap \varphi(V(T)) $. \label{item:iii-monoton}
		\end{enumerate}
	\end{definition}

	\begin{lemma} \label{lem:extension-induced-subgraph}
		Let $ G $ and $ G' $ be two oriented graphs fully derived from $ T $ and $ T' $ respectively. If $ T' $ is an extension of $ T $, then $ G $ is an induced subgraph of~$ G$. 
	\end{lemma}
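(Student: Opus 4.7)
The plan is to unpack the definitions and observe that the lemma is essentially a direct consequence of property \ref{item:iii-monoton} in Definition~\ref{def:monotonic-embedding}, together with the injectivity of $\varphi$. Concretely, I want to show that the subgraph of $G'$ induced by $\varphi(V(T))$ is isomorphic to $G$ via the map $\varphi$, i.e.\ for all $u,v \in V(T)$,
\[
uv \in A(G) \iff \varphi(u)\varphi(v) \in A(G').
\]
Since $\varphi$ is injective, this is enough: $V(G) = V(T)$ and $V(G') = V(T')$, and an isomorphism onto an induced subgraph is exactly what we want.

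First, I would recall that, by the definition of the fully derived oriented graph, $uv \in A(G)$ if and only if $v \in \choosePath(u)$, and similarly $\varphi(u)\varphi(v) \in A(G')$ if and only if $\varphi(v) \in \choosePath'(\varphi(u))$. So everything reduces to showing
\[
v \in \choosePath(u) \iff \varphi(v) \in \choosePath'(\varphi(u)),
\]
for all $u, v \in V(T)$.

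For the forward implication, if $v \in \choosePath(u)$ then $\varphi(v) \in \varphi(\choosePath(u))$, and by property~\ref{item:iii-monoton} the right-hand side equals $\choosePath'(\varphi(u)) \cap \varphi(V(T))$, which is contained in $\choosePath'(\varphi(u))$. For the backward implication, assume $\varphi(v) \in \choosePath'(\varphi(u))$. Since $\varphi(v) \in \varphi(V(T))$, property~\ref{item:iii-monoton} gives $\varphi(v) \in \varphi(\choosePath(u))$, so there exists $w \in \choosePath(u)$ with $\varphi(w) = \varphi(v)$. The injectivity of $\varphi$ forces $w = v$, hence $v \in \choosePath(u)$.

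There is no real obstacle here: conditions (i)--(iii) of Definition~\ref{def:monotonic-embedding} are not used for this particular lemma (they are structural conditions that ensure $\varphi(V(T))$ respects the tree/last-born data, and are needed elsewhere, in particular in the converse direction that identifies induced subgraphs of derived graphs with extensions). The only subtle point is to be careful with the intersection with $\varphi(V(T))$ in condition~\ref{item:iii-monoton}, which is precisely what makes the backward implication go through via injectivity.
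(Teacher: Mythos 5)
Your proposal is correct and follows essentially the same route as the paper: identify $\varphi$ as an injection from $V(G)$ to $V(G')$ and use property~(iv) of Definition~\ref{def:monotonic-embedding} to show $uv \in A(G)$ iff $\varphi(u)\varphi(v) \in A(G')$. You are in fact slightly more careful than the paper's one-line justification, spelling out how injectivity handles the intersection with $\varphi(V(T))$ in the backward direction.
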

	
	\begin{proof}
		Let $ \varphi $ be the injection from $ V(T) $ to $ V(T')$. Since $ G $ and $ G' $ are fully derived from $ T $ and $ T' $, $ V(G) = V(T) $ and $ V(G') = V(T') $. Thus $ \varphi $ can be seen as an injection from $ V(G) $ to $ V(G') $. By property \ref{item:iii-monoton} in Definition \ref{def:monotonic-embedding}, $v \in \choosePath(u) $ if and only if $ \varphi(v) \in \choosePath'(\varphi(u)) $. In other words, $ uv \in A(G) $ if and only if $ \varphi(u) \varphi (v) \in A(G')$. Thus $ G $ is an induced subgraph of $ G' $.      
	\end{proof}

	Next lemma shows that the tree sequence $ (T_k)_{k \geq 1} $ contains all the Burling trees in the extension sense.
	
	\begin{lemma} \label{lem:Tk-extension-T}
		If $ (T, r, \lastBorn, \choosePath) $ is a Burling tree such that every non-leaf vertex has exactly two children, then there is an integer $ i \geq 1 $ such that $ T_i $ is an extension of~$ T  $.
	\end{lemma}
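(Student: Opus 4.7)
I would proceed by strong induction on $n := |V(T)|$. The base case $n = 1$ is immediate: $T_1$ is itself a single vertex and extends $T$ via the identity. For the inductive step, the two-children property forces the root $r$ to have exactly two children: its last-born $a = \lastBorn(r)$ and a non-last-born $b$, with $c_T(b) = \beta$ a (possibly empty) branch of the subtree rooted at $a$. Let $T_A$ and $T_B$ denote the two subtrees rooted at $a$ and $b$, each regarded as a Burling tree in its own right by setting $c(a) = c(b) = \varnothing$ and otherwise inheriting the structure from $T$. Both have the two-children property and strictly fewer vertices than $T$. By the inductive hypothesis some $T_{i_A}$ extends $T_A$ and some $T_{i_B}$ extends $T_B$; and since $T_{k+1}$ is always an extension of $T_k$ via the main-copy identification inside $\nextT$, and extensions compose, we may take a single $i := \max(i_A, i_B)$ with extensions $\varphi_A\colon V(T_A) \to V(T_i)$ and $\varphi_B\colon V(T_B) \to V(T_i)$, each sending the root of the corresponding subtree to $r_i$.

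I would then build an extension $\varphi\colon V(T) \to V(T_{i+2})$. Recall from Definition~\ref{def:BurlingTreeNext} that inside $T_{i+1}$ every leaf $\lambda$ of the main copy of $T_i$ acquires a last-born $\lambda_P$ rooting a fresh copy $C_\lambda$ of $T_i$, together with non-last-born leaves $\lambda_{P,Q}$ satisfying $c(\lambda_{P,Q}) = Q$ as $Q$ ranges over the principal branches of $C_\lambda$; inside $T_{i+2}$, each such $\lambda_{P,Q}$ in turn acquires a last-born child rooting a fresh copy of $T_{i+1}$. Fix any leaf $\lambda$ of the main $T_i$ (one exists by Lemma~\ref{lem:Tk-two-children}), set $\varphi(r) := r_{i+2}$, embed $T_A$ inside $C_\lambda$ via $\varphi_A$ (so $\varphi(a) = \lambda_P$), and pick a principal branch $Q$ of $C_\lambda$ containing $\varphi_A(\beta)$ as an initial segment (any $Q$ works if $\beta = \varnothing$). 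If $\beta = \varnothing$, put $\varphi(b)$ equal to the last-born child of $\lambda_{P,Q}$ in $T_{i+2}$, whose $c$-set is empty, and embed $T_B$ into the fresh $T_{i+1}$-copy rooted there via $\varphi_B$ composed with the main-copy inclusion $T_i \hookrightarrow T_{i+1}$. If $\beta \neq \varnothing$, put $\varphi(b) := \lambda_{P,Q}$ and embed $T_B$ into the subtree of $T_{i+2}$ below $\lambda_{P,Q}$, using the same mechanism applied to the fresh $T_{i+1}$-copy attached at $\lambda_{P,Q}$'s last-born.

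Conditions (i)--(iii) of Definition~\ref{def:monotonic-embedding} then follow directly: $\varphi(r) = r_{i+2}$ by construction; ancestry is preserved because the images of $T_A$ and $T_B$ lie in disjoint subtrees both hanging below $\lambda$ (inside $C_\lambda$ and below $\lambda_{P,Q}$ respectively); and last-born preservation follows from those of $\varphi_A$ and $\varphi_B$ together with $\varphi(a) = \lambda_P$ being last-born. Condition (iv) is routine for vertices of $V(T_A) \cup V(T_B)$ because each image lives in its own region and the relevant $c$-branches of $T_{i+2}$ do not reach across. The main obstacle is the case $v = b$ with $\beta \neq \varnothing$: one must verify $Q \cap \varphi(V(T)) = \varphi_A(\beta)$, not merely the easy inclusion $\supseteq$; equivalently, the principal branch $Q$ of $C_\lambda$ must leave $\varphi_A(V(T_A))$ immediately after crossing $\varphi_A(\beta)$. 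I would handle this by strengthening the inductive hypothesis to require, in addition, that $\varphi_A$ be chosen so that the image of each branch of $T_A$ starting at $a$ coincides exactly with the intersection of some principal branch of $T_i$ with $\varphi_A(V(T_A))$. This strengthened statement is preserved through the inductive step because the fresh copies attached by $\nextT$ always supply vertices outside the already-embedded subtrees; carrying this propagation through is where the real bookkeeping of the proof lies.
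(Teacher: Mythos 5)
Your root-splitting decomposition into $T_A$ and $T_B$ is genuinely different from the paper's proof, which instead deletes a deepest pair of sibling leaves $x,y$ (with $y$ a last-born) and re-inserts them: that choice keeps the only new choose-set to be controlled, namely $\choosePath(x)\subseteq\{y\}$, a branch of length at most one hanging at a leaf of the already-embedded tree, so the exactness check in condition (iv) stays local. Your architecture is sound up to the point you yourself flag, but that point is a genuine gap rather than bookkeeping: the entire difficulty of the lemma is concentrated in the strengthened inductive hypothesis, which you state but do not prove, and which your construction as described does not satisfy.

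Concretely, the strengthened hypothesis must hold for \emph{every} branch $\gamma$ of $T_A$ starting at its root $a$, including $\gamma=\{a\}$, since $\choosePath(b)$ may consist of the single vertex $a$. Now let $T_A$ be the three-vertex tree (root, last-born, and non-last-born whose choose-set is the last-born). Its natural extension is the identity into $T_2$, and then $\varphi_A(V(T_A))=V(T_2)$; every principal branch of $T_2$ has two vertices, so no principal branch meets the image in the root alone. Hence for the five-vertex $T$ in which $b$ is a leaf with $\choosePath(b)=\{a\}$ and $T_A$ is this three-vertex tree, your recipe yields no admissible $Q$ for $\lambda_{P,Q}$: whatever principal branch of $C_\lambda$ you choose, $Q\cap\varphi(V(T))$ has two elements while $\varphi(\choosePath(b))$ has one, so condition (iv) fails at $b$. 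The strengthened statement is therefore not ``preserved through the inductive step'' for free: the base cases and the gluing must be re-engineered so that the embedding always leaves, for each root-branch $\gamma$ (including $\gamma=\{r\}$), a principal branch that exits $\varphi(V(T))$ exactly after $\varphi(\gamma)$ --- which already forces embedding into larger $T_i$ than the minimal one, and requires re-verifying the property for the branches $\{r\}$, $\{r\}\cup\gamma_A$ and $\{r\}\cup\gamma_B$ after re-rooting $T_B$ at $\lambda_{P,Q}$. I believe this can be carried out, but it is precisely the content of the lemma and cannot be deferred as ``real bookkeeping''.
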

	
	\begin{proof}
		We prove the lemma by induction on the number of vertices of $ T $.
		
		
		For the induction step, the smallest possible $ T $ is a tree on three vertices: the root $ r $, the last-born of the root $v $, and the other child of the root $ u$. If $ \choosePath(u) = \{v\} $, then $ T_2 $ is an extension of $ T $. If $ \choosePath(u) = \varnothing $, then $ T_3 $ is an extension of $ T $ as shown in Figure~\ref{pic:T3-extensio-of-T}.
		
		\begin{figure} 
			\vspace*{-2cm}
			\centering
			\includegraphics[width=10cm]{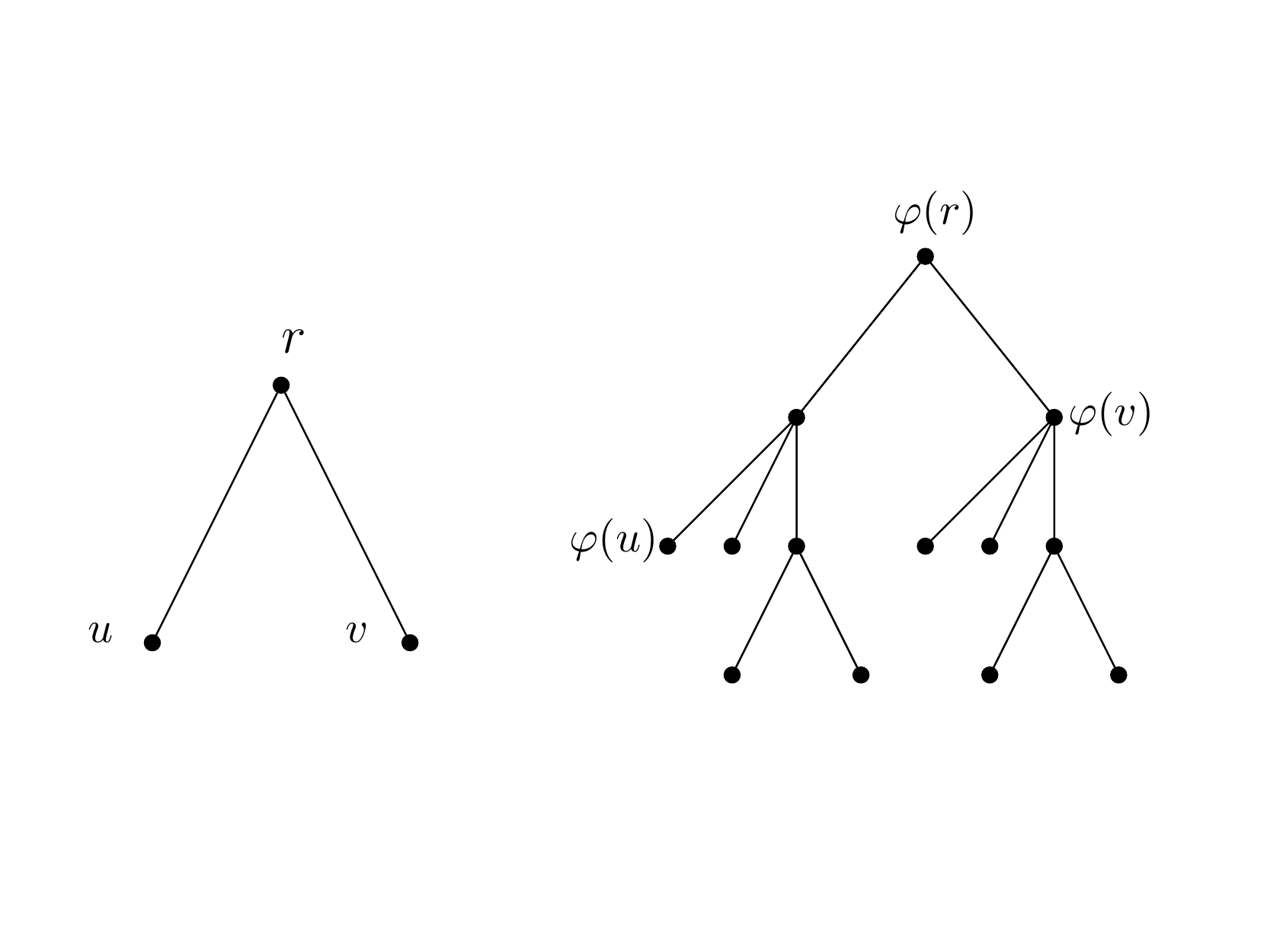}
			\vspace*{-2cm}
			\caption{When $ \choosePath(u) = \varnothing $, $T_3$ (right) is an extension of $ T $ (left)} \label{pic:T3-extensio-of-T}
		\end{figure}
		
		Suppose that the lemma is true for every Burling tree on at most $ n $ vertices. Suppose that $ T $ on $ n > 1 $ vertices is given.
		
		Consider the set of all the vertices of $ T  $ which have the maximum distance to $ r $. Because every non-leaf vertex in $ T $ has two children, there is a non-last-born vertex $ x $ in this set. Notice that $ x $ has no children. Denote by $ p $ the parent of $ x $ and by $ y $ the last-born of $ p $. Notice that $ y $ also has the maximum distance to the root, and thus both $ x $ and $ y $ are leaves of $ T $.
		
		Consider the tree $ (T', r, \lastBorn', \choosePath') $, obtained from $ T $ by removing the two leaves $ x $ and $ y $, and restricting the functions $ \lastBorn $ and $ \choosePath $. By induction hypothesis, there exist $ k $ such that $ T_k $ is an extension of $ T $. Let $ \varphi $ be the injection from $V(T)$ to $ V(T_k) $. In the rest of the proof, we will define $\varphi$ on $ x $ and $ y $ in order to extend $\varphi$ to $V(T)$, in a way that all the four properties of Definition \ref{def:monotonic-embedding} remain satisfied. 
		
		Now there are two possible cases. 
		
		\textit{Case 1:} $ y \in \choosePath(x) $. 
		
		If $ \varphi(p) $ is not a leaf of $ T_k $, then define $\varphi(x)$ to be a non-last-born child of $ \varphi(p) $, which exists by lemma \ref{lem:Tk-two-children}, and define $ \varphi(y) $ to be the last-born of $ \varphi(p) $. By Lemma \ref{lem:Tk-nonempty-c}, $\varphi(y)$ is in $ c_k(\varphi(x)) $. Notice that this extension of $\phi $ has all the properties of Definition \ref{def:monotonic-embedding}. Properties~(i) to~(iii) are easy to verify, and for Property~(iv), notice that no descendant of $ \varphi(y) $ is in the image of~$\varphi$, thus $ \varphi(\choosePath(x)) = \varphi(\{y\}) = \{\lastBorn_k (p)\} = c_k(\varphi(y)) \cap \im(\varphi) $.
		
		If $ \varphi(p) $ is a leaf of $ T_k $, then consider $ T_{k+1} $. In building $ T_{k+1} $, every leaf of the first copy of $ T_k $, including $ \varphi(p) $, will receive a last-born and at least one non-last-born child. Define again $\varphi(x)$ to be a non-last-born child of $ \varphi(p) $ and $ \varphi(y) $ to be the last-born of $ \varphi(p) $. See Figure \ref{pic:Tk-extension-T-case1}. Notice that again $ \varphi $ has all the required properties. So $ T_{k+1} $ is an extension of $ T $. 
		
		\begin{figure}
			\centering
			\vspace*{-3cm}
			\includegraphics[width=12cm]{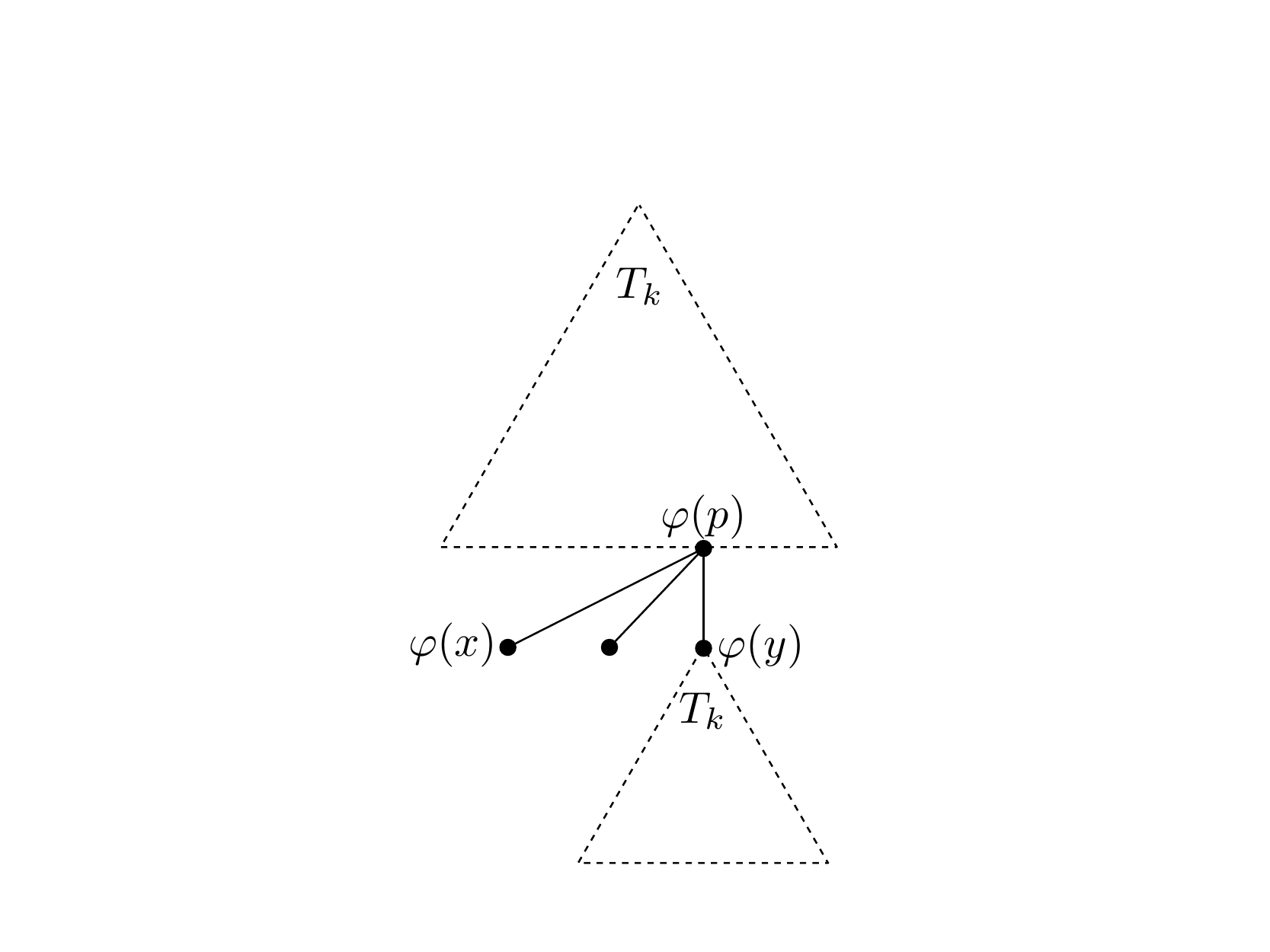} 
			\vspace*{-1cm}
			\caption{Case 1 of the proof of Lemma \ref{lem:Tk-extension-T}} \label{pic:Tk-extension-T-case1}
		\end{figure}
		
		\textit{Case 2:} $ y \notin \choosePath(x) $.
		
		If $\varphi(p)$ is not a leaf of $ T_k $, by \ref{lem:Tk-two-children} it has at least two children. Choose two paths starting at two different children of $ \varphi(p) $ and ending at two different leafs $l $ and $ \lastBorn' $ of $ T_k $. In $ T_{k+1} $, consider $ l $ and $ \lastBorn' $ in the first copy of $ T_k $. Define $ \varphi(x) $ to be some non-last-born of $ l$ in $ T_{k+1} $ and $ \varphi(y) $ to be the last-born of $ \lastBorn' $ in $ T_{k+1}$. See Figure \ref{pic:Tk-extension-T-case2-thetwo}, left. Notice that $ l \neq \lastBorn' $, thus $ \varphi(y) \notin \varphi(x) $. The new function $\varphi$ has all the required properties. Hence $ T_{k+1} $ is an extension of~$ T$.
	
		If $\varphi(p)$ is a leaf of $ T_k $, then consider $ T_{k+1} $. In $ T_{k+1} $, the vertex $ \varphi(p) $ in the main copy of $ T_k $ has a last-born $ l $ and at least one non-last-born. Choose any non-last-born child of $ \varphi (p) $ and denote it by $ n $. Notice that $ n $ is a leaf of $ T_{k+1} $. Thus in $ T_{k+2} $, this vertex will have a some children, including at least one non-last-born, that we denote by $ l' $. Notice that $ l \notin c_{k+2}(l') $. Define $ \varphi(x) = l' $ and $\varphi(y) = l $. See Figure \ref{pic:Tk-extension-T-case2-thetwo}, right. It is easy to check that $\varphi$ has all the properties of Definition \ref{def:monotonic-embedding}, so $ T_{k+2} $ is an extension of~$ T $.   
\begin{figure}
		\centering
		\includegraphics[width=12cm]{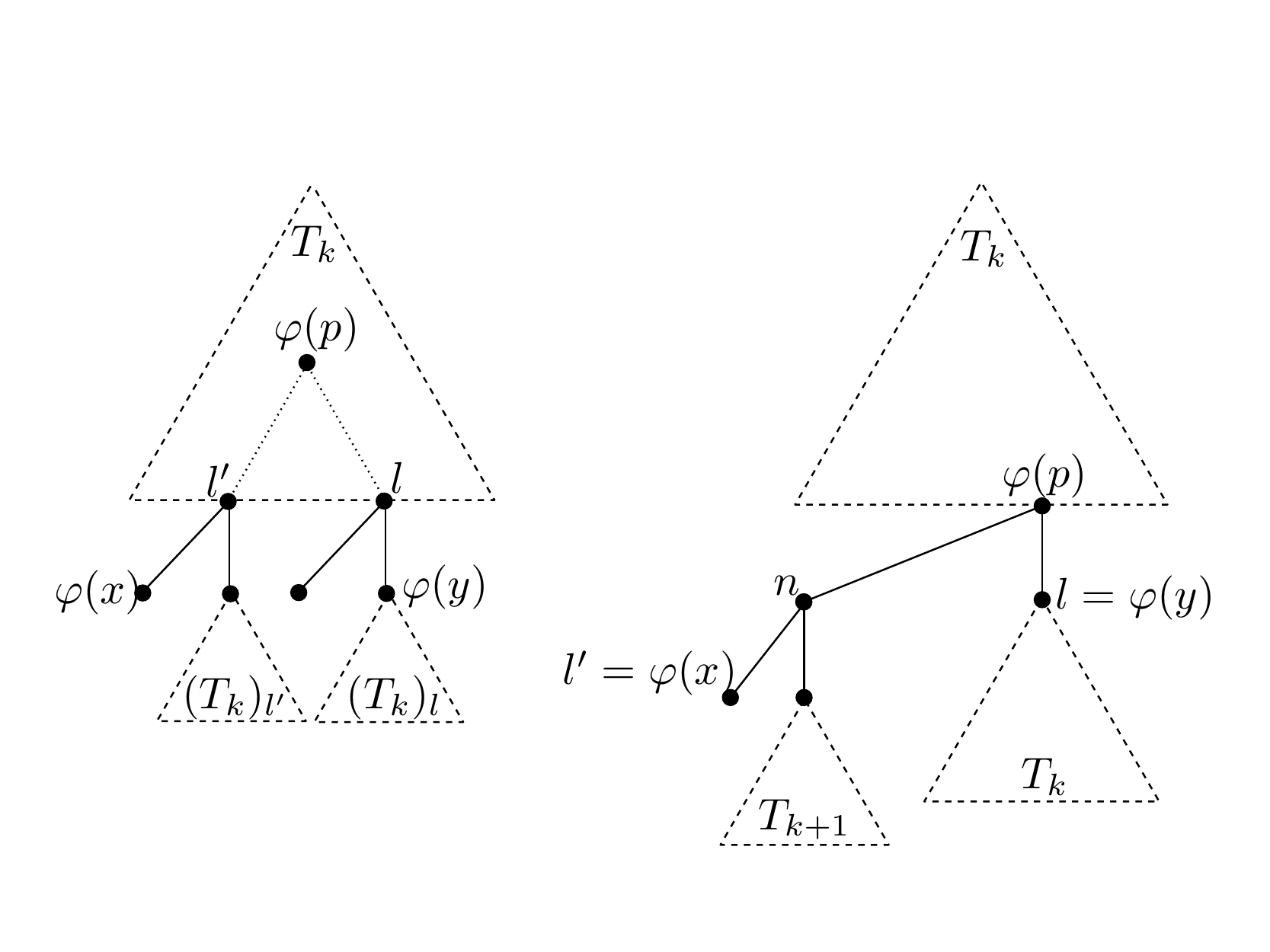}
		\vspace*{-1cm}
		\caption{Case 2 of the proof of Lemma \ref{lem:Tk-extension-T}} \label{pic:Tk-extension-T-case2-thetwo}
\end{figure}
	\end{proof}

	Now we can prove the main theorem of the section.
	
	\begin{theorem}
		\label{thm:B=D}
		The class of derived graphs is the
		same as the class of Burling graphs.
	\end{theorem}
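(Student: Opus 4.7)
The proof will be a short assembly of the lemmas already established in the section, so there is essentially no new work beyond carefully splicing them together. The plan is to prove the two inclusions separately.

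For the inclusion ``every Burling graph is a derived graph'', I would start with an arbitrary Burling graph $G$, which by definition is an induced subgraph of some $G_k$. By Lemma~\ref{lem:Gk-is-derived-from-Tk}, $G_k$ is fully derived from $T_k$, so $G$ is an induced subgraph of a graph fully derived from a Burling tree, and therefore $G$ is a derived graph by definition.

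For the reverse inclusion ``every derived graph is a Burling graph'', I would start with a derived graph $G$ and first apply Lemma~\ref{l:chooseT} to replace the Burling tree from which $G$ is derived with one in which every non-leaf vertex has exactly two children (this is the only property from Lemma~\ref{l:chooseT} that will be needed, in order to invoke the next lemma). Call the resulting tree $T$. By Lemma~\ref{lem:Tk-extension-T}, there exists an integer $i \geq 1$ such that $T_i$ is an extension of $T$. Then Lemma~\ref{lem:extension-induced-subgraph} tells us that the (oriented) graph fully derived from $T$ is an induced subgraph of the oriented graph fully derived from $T_i$, and taking underlying graphs preserves the induced subgraph relation. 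By Lemma~\ref{lem:Gk-is-derived-from-Tk}, the underlying graph fully derived from $T_i$ is exactly $G_k$, which is in the Burling sequence. Since $G$ is an induced subgraph of the graph fully derived from $T$, and Burling graphs form a hereditary class, $G$ is a Burling graph.

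No step really functions as an obstacle; the whole statement is essentially a corollary of Lemmas~\ref{lem:Gk-is-derived-from-Tk}, \ref{lem:extension-induced-subgraph}, \ref{lem:Tk-extension-T}, and~\ref{l:chooseT}. The one point where I would pay attention is making the oriented/non-oriented distinction explicit, since derived graphs are defined first as oriented graphs and then as their underlying graphs, while the Burling sequence is non-oriented; invoking Lemma~\ref{lem:extension-induced-subgraph} at the oriented level and then passing to underlying graphs handles this cleanly.
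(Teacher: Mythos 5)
Your proposal is correct and follows exactly the same route as the paper: Lemma~\ref{lem:Gk-is-derived-from-Tk} for the forward inclusion, and the chain Lemma~\ref{l:chooseT} $\to$ Lemma~\ref{lem:Tk-extension-T} $\to$ Lemma~\ref{lem:extension-induced-subgraph} $\to$ Lemma~\ref{lem:Gk-is-derived-from-Tk} for the converse. Your explicit remark about passing from oriented derived graphs to their underlying graphs is a point the paper glosses over, and it is handled correctly.
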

	
	\begin{proof}
		Suppose that $ H $ is a Burling graph. So $ H $ is an induced
		subgraph of some $ G_k $ which is a fully derived graph by Lemma
		\ref{lem:Gk-is-derived-from-Tk}. Thus $ H $ is a derived graph.
		
		Now suppose that $ H $ is derived from a tree $ T $. By Lemma
		\ref{l:chooseT}, we may assume that every non-leaf vertex in $ T $
		has exactly two children. Notice that $ H $ is an induced subgraph
		of $ G $, the graph fully derived from $ T $. By Lemma
		\ref{lem:Tk-extension-T}, there exists $ k $ such that $ T_k $ is an
		extension of $ T $. Moreover, by Lemma
		\ref{lem:Gk-is-derived-from-Tk}, $ G_k $ is the graph fully derived
		from $ T_k $. Thus by Lemma \ref{lem:extension-induced-subgraph},
		$ G $ is an induced subgraph of $G_k $, and thus it is a Burling
		graph. Therefore, so is~$ H $.
	\end{proof}
	
	Theorem \ref{thm:B=D} enables us to interchangeably use the
        words Burling graphs or derived graphs for referring to this
        class.  The advantage of derived graphs to the classical
        definition of Burling graphs is that thanks to the tree
        structure, we can study the behavior of the stable sets much
        better. The Burling tree captures in 
        an easier way both the structure of the
        stable sets, and the adjacency of vertices in Burling graphs. 
        Moreover, as we will show in the second part of this work, 
        the orientation gives us more information about the
        properties of this class of graphs.
	
	\section{Abstract Burling graphs}
        \label{sec:abs}

        In this section, we prove that Burling graphs can be defined
        as \emph{abstract Burling graph}, that are graphs arising from
        two relations defined on a set and satisfying a small number
        of axioms.

	\begin{definition} \label{def:BurlingSet} A \emph{Burling set}
          is a triple $ (S, \desc, \adj) $ where $ S $ is a non-empty
          finite set, $\desc$ is a strict partial order on $S$,
          $ \adj $ is a binary relation on $S$ that does not have
          directed cycles, and such that the following axioms hold:
          \begin{enumerate}[label=(\roman*)]
          \item if $ x \desc y $ and $ x \desc z $, then either
            $ y \desc z $ or $ z \desc y $, \label{item:descdesc}
          \item if $ x \adj y $ and $ x \adj z $, then either
            $ y \desc z $ or $ z \desc y $, \label{item:adjadj}
          \item if $ x \adj y $ and $ x \desc z $, then $ y \desc z
            $, \label{item:adjdesc}
          \item if $ x \adj y $ and $ y \desc z $, then either
            $ x \adj z $ or $ x \desc z $. \label{item:transitiveboth}
          \end{enumerate}
	\end{definition}
	
	Let us give an example of a Burling set. Let
        $ (T, r, \lastBorn, \choosePath) $ be a Burling tree, and set
        $ V = V(T) $. For $ x, y \in V $, we
        define $ x \desc y $ if and only if $ x $ is a proper
        descendant of $ y $ in $ T $ and $ x \adj y $ if and only if
        $ y \in \choosePath(x) $.  Note that $ x \adj y $ if and only
        if there is an arc from $ x $ to $ y $ in the oriented graph
        fully derived from $ (T, r, \lastBorn, \choosePath) $.
	
	We show that $ (V, \desc, \adj) $ forms a Burling set. First notice that the proper descendant relation on a rooted tree forms a strict partial order. Second, remember that by Lemma \ref{lem:DG-no-cycle-no-triangle}, the relation $ \adj $ has no directed cycles. Now we check the four axioms of Definition \ref{def:BurlingSet}. Let $ x $, $ y $, and $ z  $ be three elements of $ V $:
	
	Axiom \ref{item:descdesc}: Suppose that $ x \desc y $ and $ x \desc z $. So both $ y $ and $z $ are ancestors of $ x $ in $T $, so they are on the same branch and hence comparable with respect to~$ \desc $.
	
	Axiom \ref{item:adjadj}: Suppose that $ x \adj y $ and $ x \adj z $. So $ y, z \in \choosePath(x) $. Thus by definition, they are on the same branch and are comparable with respect to~$ \desc $.
	
	Axiom \ref{item:adjdesc}: Suppose that $ x \adj y $ and $ x \desc z $. So $ y \in \choosePath (x) $ and thus $ y $ is a descendant of $ p(x) $. On the other hand, $ z $ is an ancestor of $ x $, so it is an ancestor of $ y $ too, and it is different from $ y $. Hence $ y \desc z $. 
	
	Axiom \ref{item:transitiveboth}: Suppose that $ x \adj y $ and $ y \desc z $. Let $ l $ be the last-born of $ p(x) $. So $ y $ is a descendant of $ l $, and $ z $ is an ancestor of $ y $. Either $ z $ is a descendant of $ l $ too, in which case $ x \adj z $ or $ z $ is a proper ancestor of $ l $, in which case it is a proper ancestor of $ x $ too, i.e.\ $ x \desc z $.

	
	\begin{lemma} \label{lem:only-on-of-the-four}
		Let $ S $ be a Burling set, and let $ x, y \in S $. At most one of the following holds: 
		$ x \adj y $, $ y \adj x $, $ x \desc y $, or $ y \desc x $. In particular, $ \adj \cap \desc = \varnothing $. 
	\end{lemma}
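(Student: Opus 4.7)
The plan is to examine all six unordered pairs among the four listed statements and rule each one out. Two are immediate from the standing hypotheses on the relations themselves: the pair ``$x \desc y$ and $y \desc x$'' is forbidden by asymmetry of the strict partial order $\desc$, and the pair ``$x \adj y$ and $y \adj x$'' is forbidden because $\adj$ contains no directed cycle (in particular, no 2-cycle).

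For the four ``mixed'' pairs, the strategy is to apply axioms~\ref{item:adjdesc} and~\ref{item:transitiveboth} with a well-chosen third variable $z$ so as to produce a self-loop, which then contradicts either asymmetry of $\desc$ (which entails irreflexivity) or the absence of 1-cycles in $\adj$. Concretely, if both $x \adj y$ and $x \desc y$ held, then axiom~\ref{item:adjdesc} with $z := y$ would yield $y \desc y$, contradicting asymmetry. Swapping the roles of $x$ and $y$ rules out ``$y \adj x$ and $y \desc x$''. Similarly, if both $x \adj y$ and $y \desc x$ held, then axiom~\ref{item:transitiveboth} with $z := x$ would yield $x \adj x$ or $x \desc x$, both impossible. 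Swapping the roles of $x$ and $y$ rules out ``$y \adj x$ and $x \desc y$''.

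Since all six pairs are ruled out, at most one of the four statements can hold, as desired; the ``in particular'' clause $\adj \cap \desc = \varnothing$ is precisely the subcase ``$x \adj y$ and $x \desc y$'', already handled above. I do not expect any genuine obstacle: the only conceptual point is picking the right substitution, namely $z = y$ in axiom~\ref{item:adjdesc} and $z = x$ in axiom~\ref{item:transitiveboth}, and in each case the shape of the axiom essentially dictates this choice.
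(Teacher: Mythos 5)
Your proof is correct and follows essentially the same route as the paper: both rule out the symmetric pairs using asymmetry of $\desc$ and acyclicity of $\adj$, and both kill the mixed pairs by substituting $z=y$ into Axiom~(iii) to force $y \desc y$ and $z=x$ into Axiom~(iv) to force $x \adj x$ or $x \desc x$. No issues.
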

	
	\begin{proof}
		Notice that if any of the four relations hold, then $ x \neq y $, because $ \desc $ is a strict partial order and $ \adj $ has no directed cycle of length 1. 
		
		First suppose that $ x \adj y $. Because $\adj $ has no directed cycles, we cannot have $ y \adj x $. Moreover, if $ x \desc y $, then by Axiom \ref{item:adjdesc} of Definition \ref{def:BurlingSet} we must have $ y \desc y $ which is a contradiction. If $ y \desc x $, then by Axiom \ref{item:transitiveboth}, we have either $ x \adj x $ or $ x \desc x $, in both cases, it is a contradiction.
		
		It just remains to check that $ x \desc y $ and $y \desc x  $ cannot happen simultaneously, which is clear by the definition of strict partial orders.
	\end{proof}
	
	\begin{lemma} \label{lem:union-of-the-two-relations}
		Let $ \Rl = \adj \cup \desc $. The relation $ \Rl $ has no directed cycle. In particular, $ \Rl $ has some minimal element which is therefore minimal for both $ \adj $ and $ \desc $. 
	\end{lemma}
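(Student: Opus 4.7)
The plan is to prove the main assertion by contradiction: assume $\Rl$ contains a directed cycle, pick one $x_1 \Rl x_2 \Rl \cdots \Rl x_n \Rl x_1$ of minimum length $n$, and derive a contradiction. First I would observe that by Lemma~\ref{lem:only-on-of-the-four}, at most one of $x \adj y$, $y \adj x$, $x \desc y$, $y \desc x$ holds, so $\Rl$ is asymmetric and in particular admits no cycle of length $1$ or $2$; hence $n \geq 3$.

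Next I would split by the types of the edges appearing in the cycle. If every edge is of type $\desc$, the cycle contradicts the fact that $\desc$ is a strict partial order. If every edge is of type $\adj$, the cycle contradicts the standing hypothesis that $\adj$ has no directed cycle. Otherwise both types occur, and walking around the closed cycle the edges cannot be all $\desc$ after some $\adj$ without ever switching back, so there must be some index $i$ (read mod $n$) with $x_i \adj x_{i+1}$ and $x_{i+1} \desc x_{i+2}$: in any cyclic word on two letters containing both letters, the transition $\adj \to \desc$ occurs at least once.

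At such a position, Axiom~(iv) of Definition~\ref{def:BurlingSet} applies and yields $x_i \adj x_{i+2}$ or $x_i \desc x_{i+2}$; in either case $x_i \Rl x_{i+2}$. Deleting $x_{i+1}$ therefore produces a directed cycle of length $n-1$ in $\Rl$. Since $n \geq 3$, this shortened cycle has length at least $2$, contradicting the minimality of $n$. This completes the proof that $\Rl$ has no directed cycle. The ``in particular'' part is then immediate: $\Rl$ is a relation on the finite set $S$ with no directed cycle, so by the remark at the end of Section~\ref{sec:notation} some element $s \in S$ is minimal for $\Rl$; since $\adj \subseteq \Rl$ and $\desc \subseteq \Rl$, no $t \in S \setminus \{s\}$ satisfies $t \adj s$ or $t \desc s$, so $s$ is simultaneously minimal for both $\adj$ and $\desc$.

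The main obstacle is the combinatorial bookkeeping around the shortening step: one must justify cleanly that a cycle using both relation types necessarily contains an $\adj$-edge immediately followed by a $\desc$-edge (so that Axiom~(iv), and not some non-existent dual, is what we need), and one must verify that the shortened sequence is still a legitimate directed cycle of length $\geq 2$, which ultimately rests on the asymmetry of $\Rl$ provided by Lemma~\ref{lem:only-on-of-the-four}.
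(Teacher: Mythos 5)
Your proof is correct and follows essentially the same route as the paper: take a minimum-length cycle, rule out lengths $1$ and $2$ via Lemma~\ref{lem:only-on-of-the-four}, locate an $\adj$-edge immediately followed by a $\desc$-edge, and apply Axiom~(iv) to shorten the cycle. The only cosmetic difference is that the paper treats $n=3$ as a separate case (deriving $x_1\Rl x_3$ and $x_3\Rl x_1$ directly) whereas you fold it into the general shortening argument, which works equally well since the paper admits directed cycles of length $2$.
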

	
	\begin{proof}
		Suppose for the sake of contradiction that there is a cycle in $\Rl $, and let $ x_1, x_2, \dots, x_n $ be a minimal cycle. 
		
		By definition, $ n \neq 1 $, and by Lemma \ref{lem:only-on-of-the-four}, $ n \neq 2 $. 
		
		Now suppose that $ n \geq 4 $. Notice that none of $ \adj $ and $ \desc $ has a directed cycle, thus there exists $ 1 \leq i \leq n $, such that $ x_i \adj x_{i+1} $ and $ x_{i+1} \desc x_{i+2} $ (summations modulo $n $). Hence by Axiom \ref{item:transitiveboth}, we must have either $ x_{i} \adj x_{i+2} $ or $ x_{i} \desc x_{i+2} $. In any case, $ x_i \Rl x_{i+2} $, which is in contradiction to the minimality of the chosen directed cycle. 
		
		Finally, suppose that $ n = 3 $. Up to symmetry, we have $ x_1 \adj x_2 $ and $ x_2 \desc x_3 $, and therefore by Axiom \ref{item:transitiveboth}, $ x_1 \Rl x_3 $. But because this is a cycle, we must have $ x_3 \Rl x_1 $. This is in contradiction to Lemma \ref{lem:only-on-of-the-four}. 
		
		So $ \Rl $ has no directed cycle. So there exists a
                minimal element in $ \Rl $ which is, by definition, a
                minimal element for both $ \adj $ and $ \desc $.
	\end{proof}
	
	We recall that in a given Burling set $ S$, and for an element $ s $ in $ S $, $ [s{\adj}] = \{t \in S : s \adj t \} $, and $ [s {\desc}] = \{ t \in S : s \desc t  \} $.  
	
	\begin{lemma} \label{lem:Bset-everything-on-a-branch}
		Let $ s $ be an element of a Burling set $ S $. Then there exists an ordering of the elements of $ [s{\adj}] $ such as $ u_1, u_2, \dotsm u_k $ and an ordering of the elements of $ [s{\desc}] $ such as $ v_1, v_2, \dots v_l $ such that $ u_1 \desc u_2 \desc \dots \desc u_k \desc v_1 \desc v_2 \desc \dots \desc v_l $.
	\end{lemma}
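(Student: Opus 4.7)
The plan is to combine the four axioms of a Burling set almost directly. The proof breaks into three local claims, each coming from a single axiom, plus a disjointness observation.

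First I would observe that $[s{\desc}]$ is a chain with respect to $\desc$. Any two elements $v, v' \in [s{\desc}]$ satisfy $s \desc v$ and $s \desc v'$, so by Axiom~\ref{item:descdesc} they are $\desc$-comparable. Since $\desc$ is a strict partial order (in particular transitive and asymmetric), this gives a linear order on $[s{\desc}]$, so its elements can be enumerated as $v_1 \desc v_2 \desc \dots \desc v_l$.

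Next I would show, by the same method, that $[s{\adj}]$ is a chain with respect to $\desc$: for $u, u' \in [s{\adj}]$ we have $s \adj u$ and $s \adj u'$, so by Axiom~\ref{item:adjadj} they are $\desc$-comparable, and we can enumerate $[s{\adj}]$ as $u_1 \desc u_2 \desc \dots \desc u_k$. Then I would apply Axiom~\ref{item:adjdesc}: for every $u \in [s{\adj}]$ and $v \in [s{\desc}]$, from $s \adj u$ and $s \desc v$ we get $u \desc v$; in particular $u_k \desc v_1$.

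Finally I would note that the two orderings can be concatenated into a single $\desc$-chain: by Lemma~\ref{lem:only-on-of-the-four}, $\adj \cap \desc = \varnothing$, so $[s{\adj}] \cap [s{\desc}] = \varnothing$, and by transitivity of $\desc$ all the inequalities $u_i \desc u_{i+1}$, $u_k \desc v_1$, $v_j \desc v_{j+1}$ combine into the desired chain $u_1 \desc u_2 \desc \dots \desc u_k \desc v_1 \desc v_2 \desc \dots \desc v_l$. There is no real obstacle here: each axiom of Definition~\ref{def:BurlingSet} is invoked once in an essentially mechanical way, and the only subtle point is invoking Lemma~\ref{lem:only-on-of-the-four} to ensure that no $u_i$ coincides with any $v_j$ (which would otherwise break the strictness of the chain).
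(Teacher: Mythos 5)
Your proof is correct and follows essentially the same route as the paper's: Axiom~(ii) makes $[s{\adj}]$ a $\desc$-chain, Axiom~(i) makes $[s{\desc}]$ a $\desc$-chain, and Axiom~(iii) applied to $s\adj u_k$ and $s\desc v_1$ links the two. Your explicit appeal to Lemma~\ref{lem:only-on-of-the-four} for disjointness of the two sets is a small extra the paper leaves implicit, but it is a harmless (indeed welcome) clarification.
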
  
	
	\begin{proof}
		By Axiom \ref{item:adjadj} of Burling sets, all the elements of $ [s{\adj}] $ form a chain $ u_1 \desc u_2 \desc \dots \desc u_k $. Moreover, by Axiom \ref{item:descdesc}, all the elements of $ [s{\desc}] $  also form a chain $ v_1 \desc v_2 \desc \dots \desc v_l $. Finally, $ u_k \desc v_1 $ follows from Axiom \ref{item:adjdesc} since $ s \adj u_k $ and $ s \desc v_1 $.  
	\end{proof}
	

	Let $ (S, \desc, \adj) $ be a Burling set. We define the
        oriented graph $ G $ \textit{derived} from
        $ (S, \desc, \adj) $ as the oriented graph on vertex-set $ S $ such that
        for $ x, y \in S $, $ xy \in A(G) $ if and only if
        $ x \adj y $. We denote $ G $ by $ \Gr(S) $, and we say that
        $ G $ is an \emph{abstract Burling graph}.
	
	Notice that if $ S $ is a Burling set and $ G = \Gr (S) $, then for every induced subgraph $  G' $ of $ G $, $ S' = V(G') $ as a subset of $ S $ is itself a Burling set with inherited relations $ \desc $ and $ \adj $, and moreover $ G' = \Gr(S') $.

	\subsection*{Equality of abstract Burling graphs and Burling graphs}
	\begin{lemma} \label{lem:derived-is-abstractB}
		Every oriented derived graph is an abstract Burling graph.
	\end{lemma}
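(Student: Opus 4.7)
The plan is to use directly the running example of a Burling set that was constructed just after Definition~\ref{def:BurlingSet}, together with the observation that Burling sets are closed under restriction to a subset.

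First, let $G$ be an oriented derived graph. By definition there exists a Burling tree $(T, r, \lastBorn, \choosePath)$ and an oriented graph $G^\ast$ fully derived from $T$ such that $G$ is an induced subgraph of $G^\ast$. Set $V = V(T) = V(G^\ast)$ and define, for $x, y \in V$, the relations $x \desc y$ iff $x$ is a proper descendant of $y$ in $T$, and $x \adj y$ iff $y \in \choosePath(x)$. The discussion following Definition~\ref{def:BurlingSet} verifies that $(V, \desc, \adj)$ is a Burling set, and directly from the definitions $xy \in A(G^\ast)$ iff $x \adj y$, so $G^\ast = \Gr(V)$. Hence $G^\ast$ is an abstract Burling graph.

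Second, we pass from $G^\ast$ to the induced subgraph $G$. As pointed out in the remark preceding this lemma, if $S$ is any Burling set and $G' $ is an induced subgraph of $\Gr(S)$, then $S' = V(G')$ together with the restrictions of $\desc$ and $\adj$ to $S'$ is again a Burling set and $G' = \Gr(S')$. Applying this to $S = V$ and the induced subgraph $G$ of $\Gr(V) = G^\ast$, we conclude that $G = \Gr(V(G))$, so $G$ is an abstract Burling graph, as required.

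There is essentially no obstacle here: once the example after Definition~\ref{def:BurlingSet} is in place (which is where the four axioms were checked against the tree-theoretic meaning of $\desc$ and $\adj$) and the closure of Burling sets under induced substructures is noted, the lemma follows immediately. The only minor point to state explicitly is that $xy \in A(G^\ast)$ is, by the definition of ``fully derived'', the same as $y \in \choosePath(x)$, i.e.\ $x \adj y$, so the adjacency relation of $\Gr(V)$ coincides with the arc relation of $G^\ast$.
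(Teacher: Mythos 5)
Your proof is correct and follows the paper's own argument exactly: cite the verification after Definition~\ref{def:BurlingSet} that $(V(T),\desc,\adj)$ is a Burling set with $\Gr(V(T))$ equal to the fully derived graph, then invoke the closure of abstract Burling graphs under taking induced subgraphs. The extra sentence making explicit that $xy \in A(G^\ast)$ iff $x \adj y$ is a welcome clarification but does not change the approach.
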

	\begin{proof}
		We checked after Definition \ref{def:BurlingSet} that if $ T $ is a Burling tree, then $ V = V(T) $ forms a Burling set, and from there, it follows easily that the graph fully derived from $ T $ is exactly $ \Gr (V) $. Thus every fully derived Burling graph is an abstract Burling graph. Moreover, since abstract Burling graphs form a hereditary class, every derived graph is an abstract Burling graph.
	\end{proof}
	
	\begin{lemma} \label{lem:abstractB-is-derived}
		Let $ G $ be an oriented graph. If $ G = \Gr (S) $ for some Burling set $ S $, then $ G $ is an oriented derived graph.
	\end{lemma}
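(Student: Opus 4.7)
The plan is to argue by induction on $|S|$. The base case $|S| = 1$ is immediate: the graph has a single vertex and is derived from a Burling tree consisting of a root together with that vertex as its only leaf child.

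For the inductive step, I would invoke Lemma~\ref{lem:union-of-the-two-relations} to pick $s \in S$ minimal with respect to $\Rl = \adj \cup \desc$. Such an $s$ has no in-arc in $G$ and no proper $\desc$-descendant in $S$. Applying the induction hypothesis to the restricted Burling set $S' = S \sm \{s\}$ yields a Burling tree $T'$ from which $G' = \Gr(S') = G - s$ is derived; by Lemma~\ref{l:chooseT} one may further assume every non-leaf vertex of $T'$ has exactly two children and that no last-born of $T'$ belongs to $V(G')$.

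The task is then to modify $T'$ into a Burling tree $T$ from which $G$ is derived. By Lemma~\ref{lem:Bset-everything-on-a-branch}, the sets $[s{\adj}]$ and $[s{\desc}]$ join to form a single $\desc$-chain $u_1 \desc \cdots \desc u_k \desc v_1 \desc \cdots \desc v_l$. Since $T'$ derives $G'$, these elements all lie on a common branch of $T'$, with $v_l$ closest to the root and $u_1$ deepest. I would construct $T$ by inserting $s$ together with one or two shadow vertices on that branch just below $v_1$ (or just below the root of $T'$ when $[s{\desc}] = \varnothing$): create a new interior vertex $p$ on the branch, declare its last-born to be the vertex lying immediately below it on the branch (arranged so that the downward branch from that last-born meets $S$ exactly in $u_k, u_{k-1}, \ldots, u_1$), and attach $s$ as a new non-last-born child of $p$ with $\choosePath_T(s)$ equal to this branch. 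If necessary, a single application of the sliding operation of Lemma~\ref{l:slide} is used first to promote $u_k$ into a last-born position.

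The main obstacle is the bookkeeping of this local tree surgery. One must check that $T$ is still a Burling tree, that the choose-function values of every vertex of $S'$, intersected with $V(G)$, still reproduce the $\adj$-relation on $S'$ inherited from $G'$, and that $s$ receives exactly the arcs to $u_1, \ldots, u_k$ and no others. Axioms~\ref{item:adjdesc} and~\ref{item:transitiveboth} of Definition~\ref{def:BurlingSet} are precisely what rule out spurious arcs to or from $s$ created by the insertion, and the chain property of Lemma~\ref{lem:Bset-everything-on-a-branch} is what keeps the modification local to a single branch. The corner cases $[s{\adj}] = \varnothing$ and $[s{\desc}] = \varnothing$ call for separate but easy subcases (in the first, $s$ is simply attached as a last-born leaf; in the second, the insertion happens at the top of the tree, right below the root).
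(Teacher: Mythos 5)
Your overall strategy coincides with the paper's: induction on $|S|$, removal of a minimal element $s$ of $\adj \cup \desc$ supplied by Lemma~\ref{lem:union-of-the-two-relations}, the chain structure from Lemma~\ref{lem:Bset-everything-on-a-branch}, and a local insertion of $s$ at the bottom of the $\desc$-chain. But there is a genuine gap at the pivot of the argument: you apply the induction hypothesis in the form of the lemma itself (``$G'=\Gr(S')$ is derived from some Burling tree $T'$'') and then assert that ``since $T'$ derives $G'$, these elements all lie on a common branch of $T'$, with $v_l$ closest to the root and $u_1$ deepest.'' This does not follow. A derived graph records only the $\adj$-relation as arcs; the relation $\desc$ leaves no trace in $G'$, and the elements of $[s{\adj}]$ are pairwise non-adjacent in $G'$ (triangle-freeness), so a tree deriving $G'$ is free to scatter $u_1,\dots,u_k,v_1,\dots,v_l$ over unrelated branches. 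Without them on a common branch in the right order there is no place to perform your surgery, and even if they happened to lie on one branch you could not rule out spurious arcs: to argue (as you intend, via Axioms~\ref{item:adjdesc} and~\ref{item:transitiveboth}) that any $x\in S$ sitting on the branch between $\lastBorn$-head and $u_1$ already belongs to $[s{\adj}]$, you need to know that its position in $T'$ reflects $u_1\desc x$ in the Burling set.

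The fix is to strengthen the induction hypothesis, as the paper does: prove by induction that there is a Burling tree $T$ with $S\subseteq V(T)$ such that for all distinct $x,y\in S$, $x\desc y$ iff $x$ is a descendant of $y$ in $T$, and $x\adj y$ iff $y\in\choosePath(x)$. With this invariant the chain of Lemma~\ref{lem:Bset-everything-on-a-branch} really does sit on one branch of $T'$ in the order $v_l,\dots,v_1,u_k,\dots,u_1$ from the root down, the insertion of $s$ below $v_1$ (or below a new root when $[s{\desc}]=\varnothing$) goes through, and the axioms eliminate extra arcs. Note also that if you strengthen the hypothesis you should drop (or re-justify) the intermediate appeal to Lemma~\ref{l:chooseT}, since you would then have to check that its transformations preserve the $\desc$-correspondence, a verification your sketch does not supply and which is unnecessary for the argument.
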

	
	\begin{proof}
		We prove the following statement by induction on the number of elements of $ S  $.
		
		\begin{statement}
			There exists a Burling tree $ (T, r, \lastBorn, \choosePath) $ such that $ S \subseteq V(T) $, and for every two distinct elements $ x $ and $ y $ in $ S $:
			\begin{enumerate}[label=(\roman*)]
				\item $ x \desc y $ if and only if $ x $ is a descendant of $ y $ in $ T$,
				\item $ x \adj y $ if and only if $ y \in \choosePath(x)  $ in $ T $.
			\end{enumerate}
		\end{statement}
		
		If $ |S| = 1 $, then the result obviously holds. Suppose that the statement holds for every Burling set on at most $ k-1 $ elements, and let $ S $ be a Burling set on $k \geq 2 $ elements. 
		
		Let $ v \in S $ be a minimal element of $ \adj \cup \desc $ which exists by Lemma \ref{lem:union-of-the-two-relations}. Set $ S' = S \sm \{v\} $. By the induction hypothesis, there exists a Burling tree $ (T', r', \lastBorn ', \choosePath ') $ such that $ S' \subseteq V(T') $ and the two properties of the statement hold.
		
		Now let $ [v{\adj}] = \{u_1, u_2, \dots, u_m \} $ and
                $ [v{\desc}] = \{w_1, w_2, \dots, w_n \} $ (both
                possibly empty). By Lemma
                \ref{lem:Bset-everything-on-a-branch}, suppose without
                loss of generality that $ u_1 \desc u_2 \desc \dots
                \desc u_m \desc w_1 \desc w_2 \desc \dots \desc w_n
                $. Thus by the induction hypothesis, they appear on a
                same branch of $ T' $. So from the root to the leaf,
                they appear in this order: $ w_n, \dotsm w_1, u_n,
                \dots, u_1 $. Now we consider two cases: 
		
		\textit{Case 1:} $ [v{\desc}]  = \varnothing $. In this case, add a parent $ r $ to $ r' $ and define $ \lastBorn (r) = r' $. Then add $ v $ as a child of $ r$. If $ [v {\adj}] = \varnothing $, then define $\choosePath (v) = \varnothing $. Otherwise, let $ P $ be the set of vertices on the path between $ r' $ and $ u_1 $, including both of them, and define $ \choosePath (v) = P $. Call this new Burling tree $ T $. 
		
		\textit{Case 2:} $ [v {\desc}]  \neq \varnothing $. In this case, if $ w_1 $ is a leaf, and hence $ [v{\adj}] = \varnothing $, then add $ v $ as a last-born child of $ w_1 $ and define $\choosePath (v) = \varnothing $. If $ w_1 $ is not a leaf, then add $ v $ as a non-last-born child of $ w_1 $. If $ [v{\adj}] = \varnothing $, define $\choosePath (v) = \varnothing $. Otherwise, let $ P $ be the set of vertices on the path between $\lastBorn (w_1) $ and $ u_1 $, and define $ \choosePath (v) = P $. Call the obtained Burling tree~$ T $.
		
		In both cases, we obviously have $ S \subseteq V(T) $, so it remains to prove the two properties of the statement. For any two distinct elements of $ S $ which are both different from $ v$, the result follows from the induction hypothesis. So consider $ v $ and an element $ u $ of $ S $ different from $ v $. Notice that by minimality of $ v $ with respect to both relations, we have neither $ u \adj v $ nor $ u \desc v $ in $ S$, and by the construction of $ T $, in both cases, $ v $ is not in $ \choosePath(u) $, and it has no descendant, so in particular, $ u $ is not a descendant of $ v $. Moreover, by construction of $ T $ in both cases, if $ v \desc u $ in $ S $, then $ v $ is a descendant of $ u $ in $ T $, and if $ v \adj u $ in $ S$, then $ u \in \choosePath (v) $ in $ T $. 
		
		Now suppose that $ x $ is an element of $ S $, and in $ T $, $ x $ is an ancestor of $ v $, and thus we are necessarily in case 2. We prove that $ v \desc x $. If $ x  =w_1$, then the result is immediate. Otherwise, $ x $ is an ancestor of $ w_1 $. Thus by the induction hypothesis, $ w_1  \desc x $. On the other hand, $ v \desc w $. Since $\desc $ is a strict partial order, $ v \desc x $.
		
		Finally, suppose that $ x $ is an element of $ S $ and in $ T $, $x \in \choosePath (v) $. We show that $ v \adj x $ in $ S $. From $x \in \choosePath (v) $, we know that $ x $ is a vertex among the vertices of the path from the last-born of $ w_1 $ to $ u_1 $. If $ x = u_1 $, then the result is immediate. If not, we have $ v \adj u_1 $ and $ u_1 \desc x  $. So by Axiom (i) of Definition \ref{def:BurlingSet}, either $ v \adj x $ or $ v \desc x $. But the latter is not possible because otherwise from $ v \desc x $ and the fact that $ v \neq x $, we know that $ x $ is either $ w_1 $ or it is an ancestor of $ w_1 $ in $T$. But this is not possible, because $ x \in \choosePath (v) $.
		
		To complete the proof we notice that $ G $ is exactly the subgraph of the graph derived from $ T $, induced by the vertices of $ S $.
	\end{proof}
	
	\begin{theorem} \label{lem:abstractB=derived=Burling}
		The class of abstract Burling graphs is equal to the class of derived graphs, and therefore to the class of Burling graphs.
	\end{theorem}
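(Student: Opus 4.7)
The plan is to chain together the three results already established: Lemma~\ref{lem:derived-is-abstractB}, Lemma~\ref{lem:abstractB-is-derived}, and Theorem~\ref{thm:B=D}. Reading the statement charitably (with the presumably-intended word ``abstract'' inserted), the goal is to show that the classes of abstract Burling graphs, oriented derived graphs, and Burling graphs coincide. Since Theorem~\ref{thm:B=D} already identifies derived graphs with Burling graphs, what remains is to identify the class of abstract Burling graphs with the class of derived graphs.

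For the forward inclusion (derived $\subseteq$ abstract Burling), I would invoke Lemma~\ref{lem:derived-is-abstractB} directly: every oriented derived graph is by that lemma of the form $\Gr(S)$ for some Burling set $S$, hence an abstract Burling graph. For the reverse inclusion (abstract Burling $\subseteq$ derived), I would take an arbitrary abstract Burling graph $G$, which by definition is of the form $\Gr(S)$ for some Burling set $S$, and then apply Lemma~\ref{lem:abstractB-is-derived} to conclude that $G$ is an oriented derived graph. Here I would also remark (using the observation made right after the definition of abstract Burling graphs) that the class of abstract Burling graphs is automatically closed under induced subgraphs, so there is no subtlety about whether to restrict attention to ``fully'' derived graphs.

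Chaining the two inclusions gives: oriented derived graphs $=$ abstract Burling graphs. Combining with Theorem~\ref{thm:B=D}, which states that derived graphs equal Burling graphs, we obtain the triple equality asserted by the theorem.

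There is really no technical obstacle at this point; the hard work was done in proving Lemmas~\ref{lem:derived-is-abstractB} and~\ref{lem:abstractB-is-derived} (particularly the latter, whose inductive construction of a Burling tree from a minimal element of $\adj \cup \desc$ is the conceptual core) and in Theorem~\ref{thm:B=D}. The only thing worth emphasizing in the write-up is why the equivalence of abstract Burling graphs and derived graphs covers the non-fully-derived case: one needs the remark that any induced subgraph of $\Gr(S)$ is itself $\Gr(S')$ for the inherited relations on $S' \subseteq S$, so that the ``hereditary closure'' steps on the two sides match up, and no extra argument is required.
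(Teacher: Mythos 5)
Your proposal is correct and matches the paper's own proof, which likewise just cites Lemma~\ref{lem:derived-is-abstractB} and Lemma~\ref{lem:abstractB-is-derived} (with Theorem~\ref{thm:B=D} supplying the final identification with Burling graphs). Your reading of the statement's evident typo and your remark about hereditariness are both consistent with what the paper intends.
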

	
	\begin{proof}
		The proof follows directly from Lemmas \ref{lem:derived-is-abstractB} and \ref{lem:abstractB-is-derived}.
	\end{proof}
	
		We remark here that even though the classes of graphs derived from Burling sets and the graphs derived from Burling trees are the same, there is no immediate one-to-one correspondence between Burling sets and Burling graphs. Burling sets do not need notions equivalent to root and last-born. This is what makes them a general object to work with. On the other hand, derived graphs, having all these specific notions, provide strong tools to deduce structural results, as we will see in the second part of this work.
	
                \subsection*{Topological orderings}
                
        We here make several remarks about topological orderings of
        the vertices in Burling graphs, and BFS and DFS algorithms on them.  We do not really
        need these easy observations (and therefore omit their
        straightforward formal proofs), but we believe that they help
        to understand the next section.

        It was observed in Lemma~\ref{lem:DG-no-cycle-no-triangle}
        that oriented derived graphs have no directed cycles.  And in
        Lemma~\ref{lem:union-of-the-two-relations}, we go further and
        observe that the union of the relations $\desc $ and $\adj$
        has no directed cycles (which is easy to see directly on an
        oriented graph derived from a tree). This means that when an
        oriented graph $G$ is derived from a tree $T$, if one orients
        every edge of $T$ from the root to the bottom, then the
        oriented graph $G^*$ on $V(T)=V(G)$ with the union of arcs
        from $T$ and from $G$ has no directed cycle.

        This implies that there should exist a topological ordering of
        $G^*$. And indeed, there is a natural way to find one: with
        BFS applied to $T$ (starting at the root, and with priority
        given to non-last-borns).  If we denote by $\score(v)$ the
        opposite of the number given by BFS to each vertex, we have the
        following: if $u \desc v$ or $u \adj v $, then
        $\score(u) < \score (v)$. See Figure~\ref{fig:topo}.

        Now, in $G^*$, change the orientation of every arc of $G$ (but
        keep the arcs of $T$ from root to leaves).  Again, it is easy
        to check that there is no directed cycle, so there should
        exist a topological ordering again.  This time, an ordering may
        be obtained with DFS (starting at the root and with priority
        given to the last-born).  If denote by $\antiscore(v)$ the
        opposite of the number given by DFS to each vertex, we have
        the following: if $v \desc u$ or $u \adj v $, then
        $\antiscore(u) < \antiscore (v)$.  An example is represented in
        Figure~\ref{fig:topo}.

        \begin{figure}
          \centering

          \includegraphics{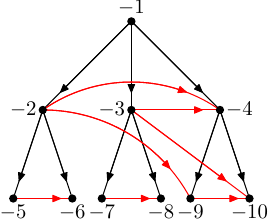} \hspace{1cm}
          \includegraphics{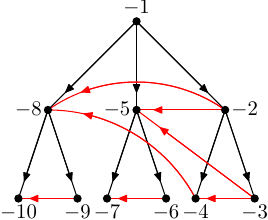}
          \caption{Functions $\score$ and $\antiscore$ viewed as
            topological orderings\label{fig:topo}. Every arc is from
          a number to a smaller number.} 
        \end{figure}

        We sum up the main properties of $\score$ and $\antiscore$, as
        defined in this paragraph, in the next lemma.  As we will see
        in the next section, in geometrical interpretations of Burling
        graphs, there are natural geometrical counterparts of the
        functions $\score$ and $\antiscore$, with each time a similar
        lemma.

        \begin{lemma}
          \label{l:scoreDerived}
          If $A \desc B$, then $\score(A) < \score(B)$ and
          $\antiscore(A) < \antiscore(B)$.  If $A \adj B$, then
          $\score(A) < \score(B)$ and $\antiscore(A) > \antiscore(B)$.
        \end{lemma}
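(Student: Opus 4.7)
The plan is to interpret the lemma as asserting that $\score$ and $\antiscore$ are linear extensions of two specific acyclic relations on $V(T)$: for $\score$, $A$ should precede $B$ whenever $A \desc B$ or $A \adj B$; for $\antiscore$, $A$ should precede $B$ whenever $A \desc B$ or $B \adj A$. Once acyclicity of these two relations is verified, the BFS/DFS-based constructions described just before the lemma provide concrete such extensions, and the four claimed inequalities are immediate from reading off whether the source or the target of each arc carries the smaller label.

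For acyclicity of the $\score$-order, this is exactly Lemma~\ref{lem:union-of-the-two-relations}. For the $\antiscore$-order, I would argue via the tree-depth function $d$ on $V(T)$: an arc coming from $A \desc B$ strictly decreases $d$ from source to target, while an arc coming from $B \adj A$ (i.e.\ $A \in \choosePath(B)$) never decreases $d$, and in fact strictly increases it unless $A = \lastBorn(p(B))$, in which case $d$ is preserved. A directed cycle would therefore have to conserve depth all the way around and so consist only of depth-preserving $\adj^{-1}$-arcs between siblings; but such arcs always point from the last-born sibling to a non-last-born sibling, and therefore cannot close a cycle.

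It then remains to check that the prescribed traversals produce linear extensions of these orders. The $\desc$-inequalities reflect the standard fact that BFS and DFS both encounter ancestors before descendants. For the $\adj$-inequalities, the key observation is that if $A \adj B$ then $B$ lies in the subtree of $\lastBorn(p(A))$: the BFS priority defining $\score$ is arranged so that $A$ (and its non-last-born subtree) come before the last-born subtree containing $B$, yielding $\score(A) < \score(B)$; while the DFS priority defining $\antiscore$ drives the traversal fully into the last-born subtree before returning to $A$, so that $B$ is visited before $A$ and hence $\antiscore(A) > \antiscore(B)$. The main subtlety, and the single point where the argument is not purely mechanical, is the boundary case $B = \lastBorn(p(A))$ in which $A$ and $B$ share the same tree-depth; here neither level-based nor subtree-based reasoning alone separates them, and it is the tiebreaking priority of the chosen traversal that produces the required order.
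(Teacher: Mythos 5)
The paper explicitly omits a proof of this lemma (``we omit their straightforward formal proofs''), so there is no official argument to compare against; your reading of the statement as ``$\score$ and $\antiscore$ are linear extensions of the acyclic relations $\desc\cup\adj$ and $\desc\cup\adj^{-1}$'' is the natural one, and the existence half of your argument (acyclicity, hence a linear extension exists) does prove the lemma. Two points in the execution are wrong as written, though.

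First, in the acyclicity argument for the $\antiscore$-order your monotonicity claims do not fit together: you say the $\desc$-arcs strictly \emph{decrease} depth from source to target while the arcs coming from $B\adj A$ never decrease it and generically \emph{increase} it. If the two arc types move depth in opposite directions, a directed cycle need not conserve depth (the changes can cancel), so the inference ``a cycle must consist only of depth-preserving arcs'' does not follow from what you stated. The correct observation is that \emph{both} arc types weakly decrease depth from source to target: the precedence arc for $B\adj A$ is $A\to B$ with $A\in\choosePath(B)$ a descendant of $\lastBorn(p(B))$, so $d(A)\ge d(B)$, with equality exactly when $A=\lastBorn(p(B))$. With that fix, your endgame (depth-preserving arcs go from a last-born to a non-last-born sibling and hence cannot be chained into a cycle) is correct.

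Second, the verification that the named traversals realize the two functions derives the wrong inequalities. If $A\adj B$ and $B$ is a \emph{proper} descendant of $\lastBorn(p(A))$, then $B$ lies at strictly greater depth than $A$, so BFS --- which is level order, regardless of tie-breaking --- numbers $A$ before $B$; since $\score$ is the \emph{negative} of the visit number, this gives $\score(A)>\score(B)$, the opposite of the claim. Your phrase ``$A$ and its non-last-born subtree come before the last-born subtree containing $B$'' describes depth-first, not breadth-first, behaviour. Symmetrically, in the DFS with priority to the last-born, $B$ is visited before $A$, so $\antiscore(B)>\antiscore(A)$, again the opposite of what you conclude. In fact the pairing in the paper's informal paragraph is swapped relative to the lemma: the negated DFS-with-last-born-priority numbering satisfies $\score(A)<\score(B)$ for both $A\desc B$ and $A\adj B$, while the negated BFS-with-non-last-born-priority numbering satisfies $\antiscore(A)<\antiscore(B)$ for $A\desc B$ and $\antiscore(A)>\antiscore(B)$ for $A\adj B$; these are the assignments the lemma needs (compare Lemma~\ref{l:scoreFrame}: a frame that enters another is the shorter one and extends further to the right). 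So either drop the traversals entirely and rest on the linear-extension existence argument, or swap the two traversals and redo the sign bookkeeping.
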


        \section{Burling graphs as intersection graphs}
        \label{sec:geom}

        In this section, we define three classes of graphs: \emph{strict frame
          graphs}, \emph{strict line segment graphs}, and \emph{strict box graphs}. We show that
        they are all equal to the class of Burling graphs.

        \subsection*{Strict frame graphs}

        A \textit{frame} in $\mathbb{R}^2$ is the boundary of an
        axis-aligned rectangle. Intersection graphs of frames are
        called \textit{frame graphs}. Frame graphs clearly form a
        hereditary class of graphs. The class of \textit{restricted
          frame graphs}, defined in~\cite{Chalopin2014}
        (Definition~2.2.), is a subclass of frame graphs. They are the
        frame graphs with some extra restrictions.

        \begin{definition}
          \label{def:rfg}
          A set of frames in the plane is \emph{restricted} if it has
          the following restrictions:
          \begin{enumerate}[label=(\roman*)]
          \item there are no three frames, which are mutually
            intersecting (in other words, the intersection graph of
            the frames is triangle-free),
          \item corners of a frame do not coincide with any point of
            another frame,
          \item the left side of any frame does not intersect any
            other frame,
          \item if the right side of a frame intersects a second
            frame, this right side intersects both the top and bottom
            of this second frame,
          \item if two frames have non-empty intersection, then no
            frame is (entirely) contained in the intersection of the
            regions bounded by the two frames.  If frames $ A $ and
            $ B $ intersect as in Figure~\ref{pic:only-possible-rfg},
            we say that frame~$ A $ \textit{enters} frame~$ B $.
          \end{enumerate}
        \end{definition}

        The only possibility for two frames to intersect with
        these restrictions is shown in Figure~\ref{pic:only-possible-rfg}. 
        In such case, we say that the frame~$ A $ \emph{enters} the frame~$ B $.

        \begin{figure}[h!]
          \begin{center}
            \centering
            \vspace*{-1.2cm}
            \includegraphics[width=7cm]{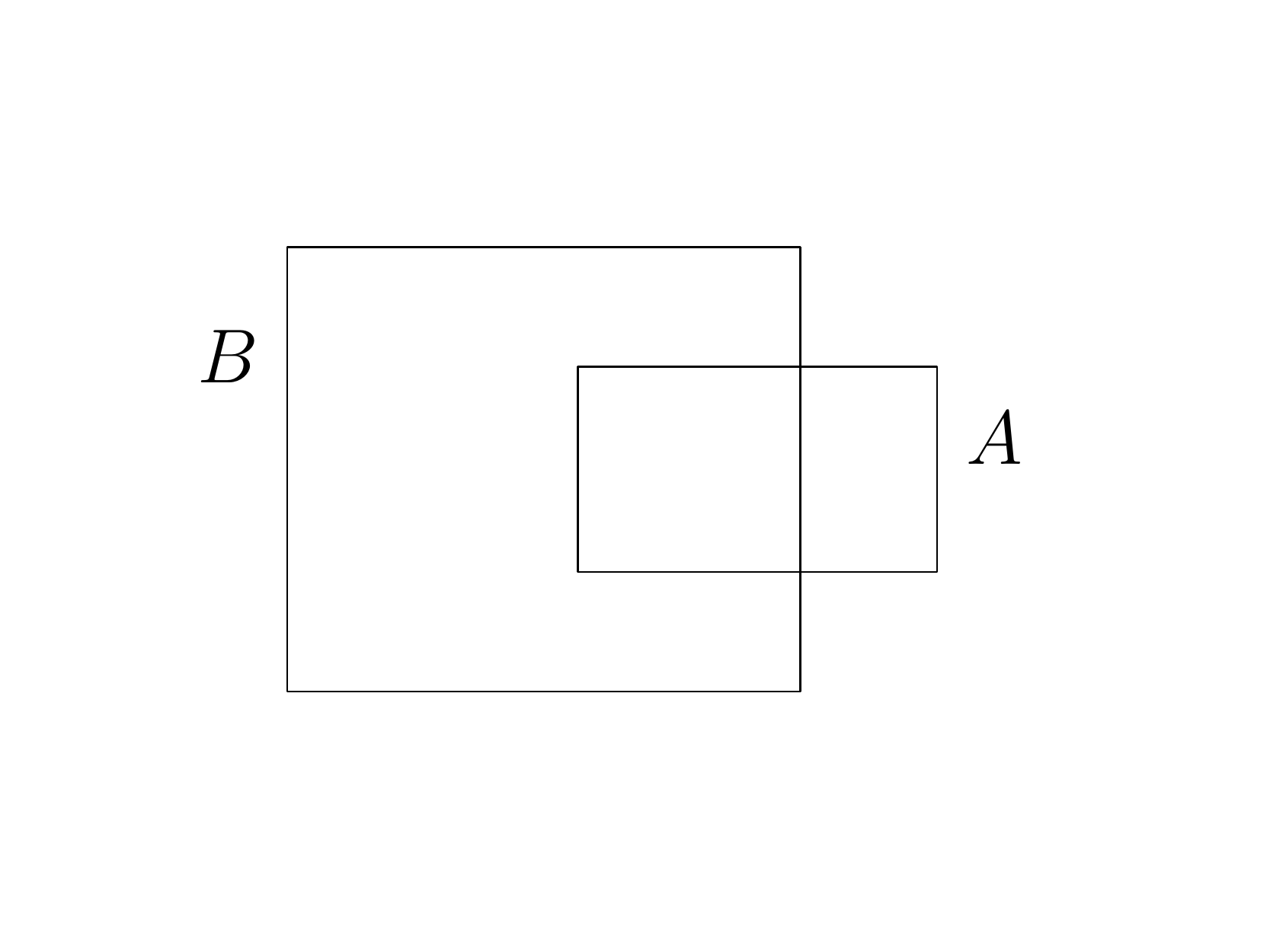}
            \vspace*{-1.5cm}
            \caption{Intersection of two frames in restricted frame
              graphs} \label{pic:only-possible-rfg}
          \end{center}
        \end{figure}

		\begin{figure}
			\begin{center}
				\vspace*{-2.5cm}
				\includegraphics[width=12cm]{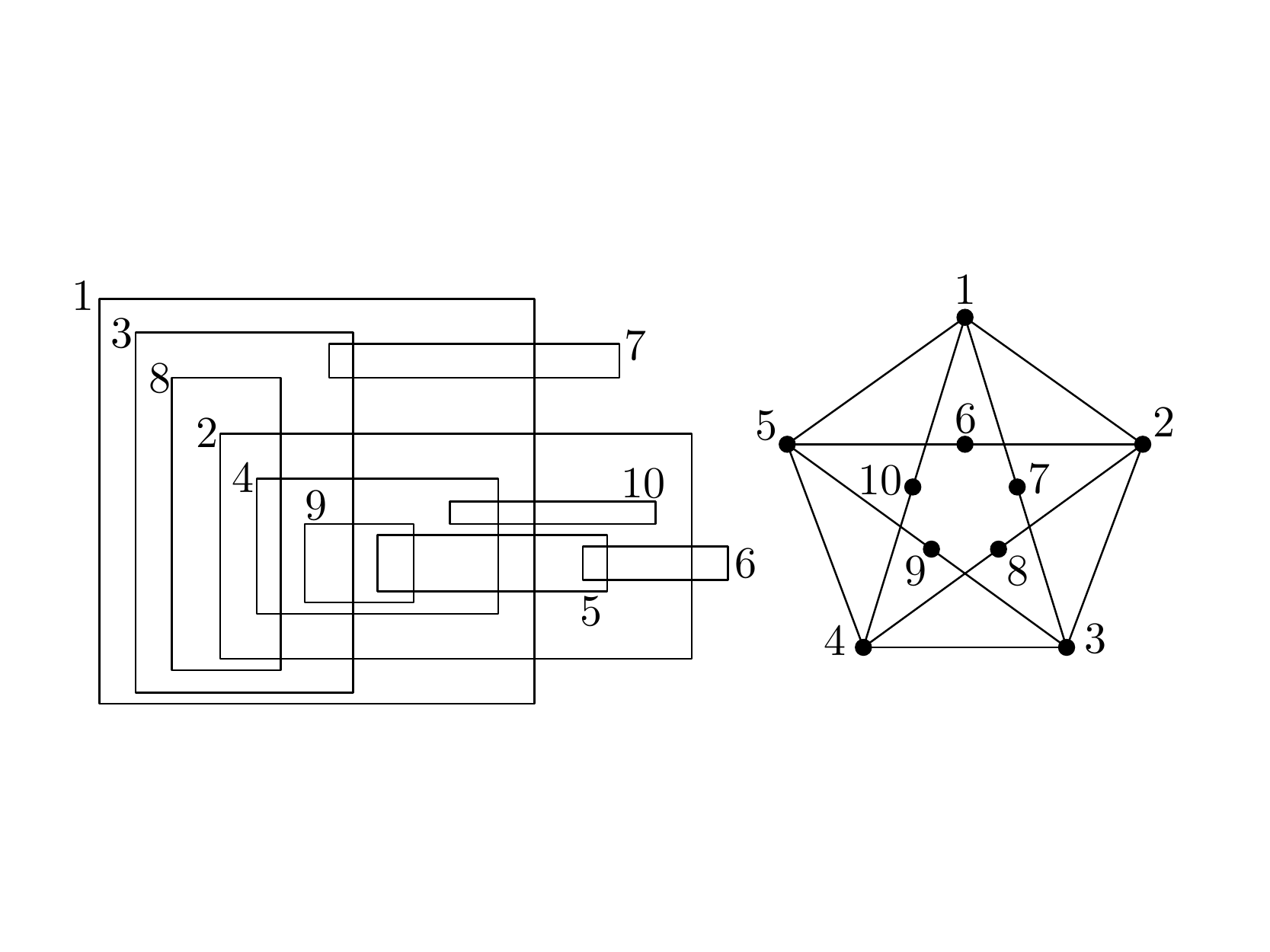}
				\vspace*{-2.4cm}
				\caption{\footnotesize A graph obtained from $ K_5 $ by subdividing some edges and its
					presentation as a restricted frame
					graph} \label{pic:k5sd-rfg}
			\end{center}
			\begin{center}
				\vspace*{-1.6cm}
				\includegraphics[width=12cm]{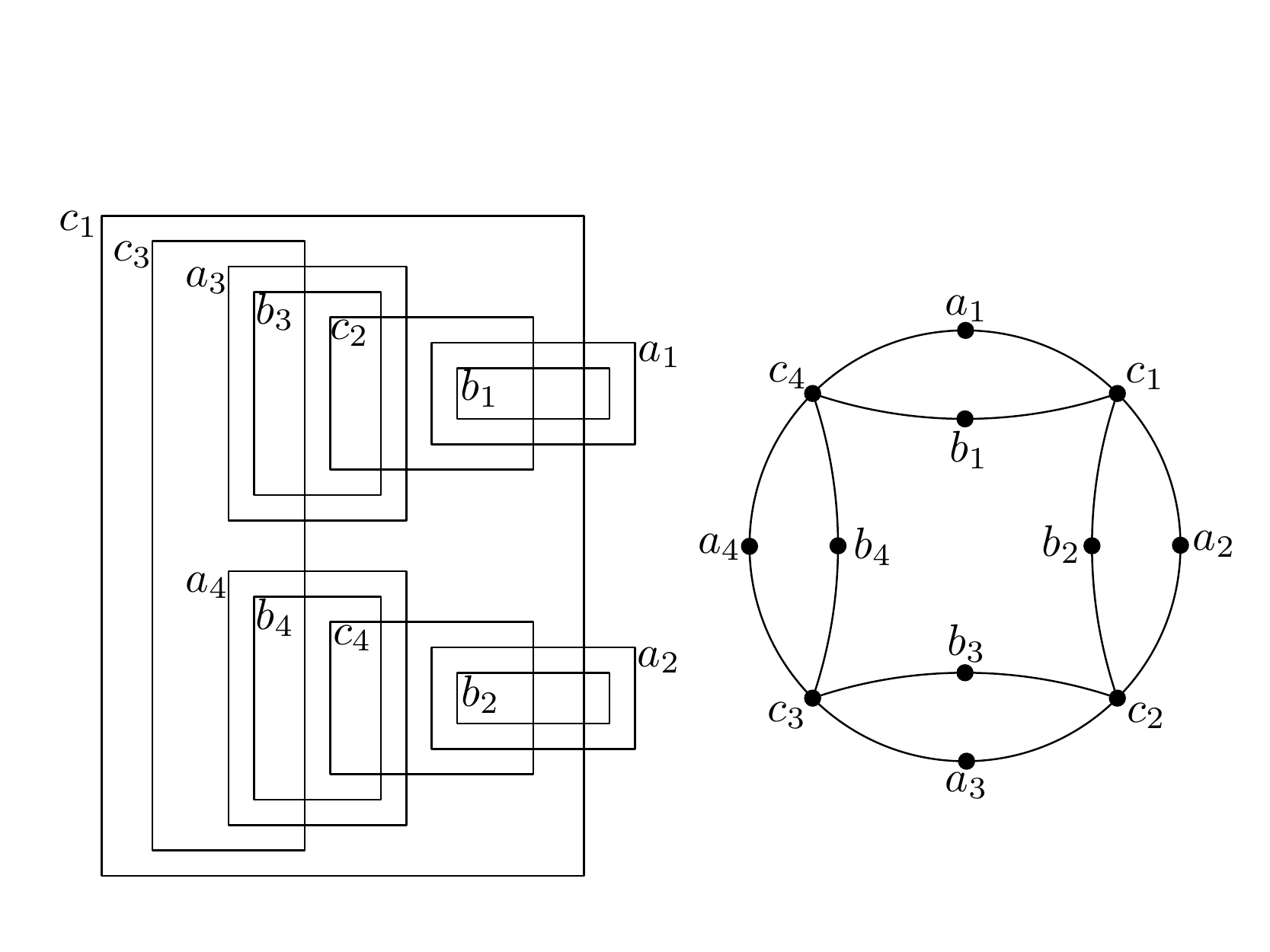}
				\vspace*{-.7cm}
				\caption{\footnotesize A necklace and its presentation
					as a restricted frame
					graph} \label{pic:necklace-rfg}
			\end{center}
			\begin{center}
				\vspace*{-2.1cm}
				\includegraphics[width=12cm]{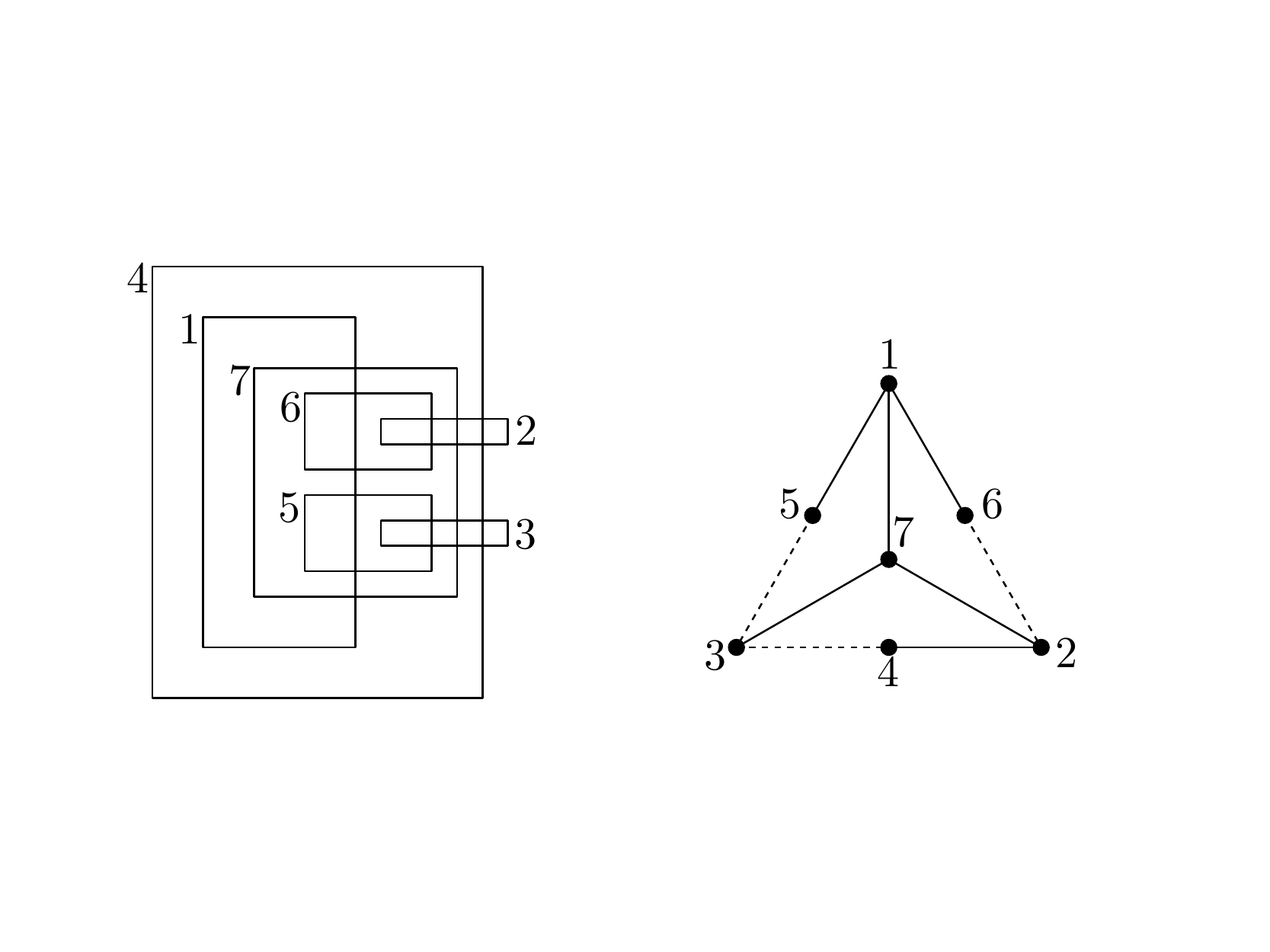}
				\vspace*{-2.5cm}
				\caption{\footnotesize A wheel and its presentation as
					a restricted frame graph. It is easy to see that
					subdividing the dashed edges yields to restricted frame
					graphs (see also Appendix A of~\cite{Chalopin2014}).} \label{pic:wheel-rfg}
			\end{center}
		\end{figure}

		 A \emph{restricted frame graph} is the intersection graph of a
		restricted set of frames in the plane.  An \emph{oriented
			restricted frame graph} is a frame graph such that every
		edge $ AB $ is oriented from $ A $ to $ B $ when frame~$ A $
		enters frame~$ B $.
		
        See Figures \ref{pic:k5sd-rfg}, \ref{pic:necklace-rfg} and
        \ref{pic:wheel-rfg} for some examples of restricted frame
        graphs. It is worth noting that in the second part of this work,
        we prove that non of these graphs are Burling graphs.

		Now we introduce the class of \emph{strict frame graphs}, a subclass
of restricted frame graphs, and show that it is equal to the
class of Burling graphs.

            \begin{definition}
              \label{def:strictfg}
              A restricted set of frames in the plane is \emph{strict}
              if for any two frames $ A $ and $ B $ such that $ A $ is
              entirely inside $ B $, when a frame $ C $ intersects
              both, $ C $ enters both $ A $ and $ B $. See Figure
              \ref{pic:super-rfg}.
            \end{definition}
            \begin{figure}[t]
              \begin{center}
                \vspace*{-2.5cm}
                \includegraphics[width=8cm]{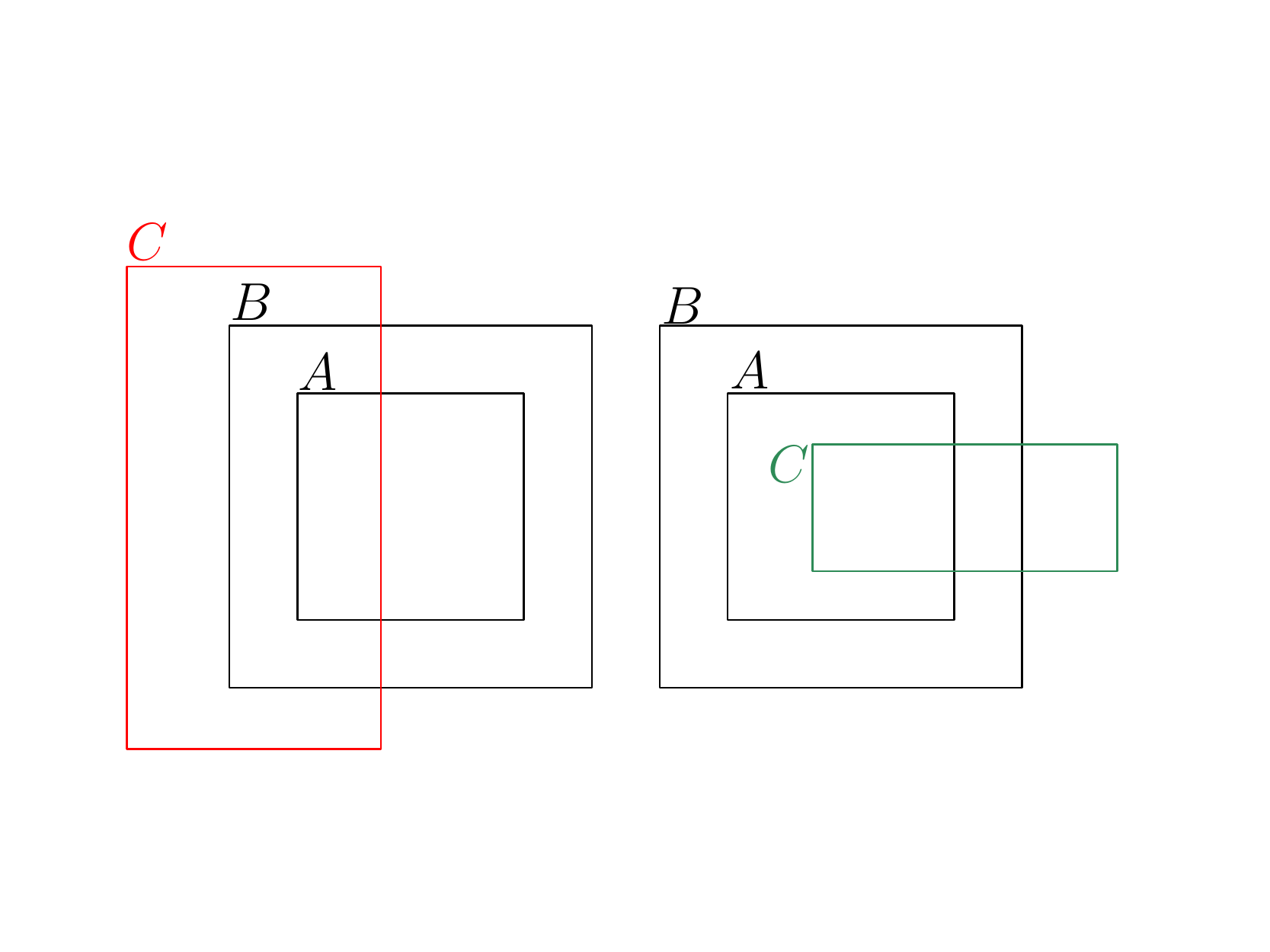}
                \vspace*{-1.4cm}
		\caption{\footnotesize Left: the forbidden structure
                  in strict frame graph, right: the allowed
                  structure} \label{pic:super-rfg}
              \end{center}
            \end{figure}

            A \emph{strict frame graph} is the intersection graph of a
            strict set of frames in the plane. An \emph{oriented
              strict frame graph} is a strict frame graph, oriented
            as an oriented restricted frame graph.

            Let $ F $ be a non-empty finite strict set of frames in
            the plane. Define $ A \desc B $ if and only if $ A $ is
            entirely inside $ B $. Define $ A \adj C$ if and only if
            $ A $ enters $ C $.  We denote by $ A \interior $ the area
            that frame $ A $ encloses.  Two frames
            are \emph{comparable} if
            $ A \interior \cap B \interior \neq \varnothing $ and
            \emph{incomparable} otherwise.  Note that in a strict set of frames, 
            two frames are
            comparable if and only if one of them enters the other or
            is inside the other.

            We denote by $\score(A)$ the vertical length of a frame
            $A$ and by $\antiscore(A)$ the maximum real number $x$
            such that $x$ is the $x$-coordinate of a point of $A$.

            \begin{lemma}
              \label{l:scoreFrame}
              If $A \desc B$, then $\score(A) < \score(B)$ and
              $\antiscore(A) < \antiscore(B)$.  If $A \adj B$, then
              $\score(A) < \score(B)$ and
              $\antiscore(A) > \antiscore(B)$.
            \end{lemma}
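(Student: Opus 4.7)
The plan is to unpack the two relations $\desc$ and $\adj$ directly in terms of the coordinates of the frames, using Definition~\ref{def:rfg} (and especially the configuration forced by Figure~\ref{pic:only-possible-rfg}) to read off strict inequalities. For a frame $A$, I denote by $\ell_A < r_A$ the $x$-coordinates of its left and right sides, and by $b_A < t_A$ the $y$-coordinates of its bottom and top sides. Then by definition $\score(A) = t_A - b_A$ and $\antiscore(A) = r_A$, so the lemma reduces to comparing these four coordinates in each of the two cases.

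First case: suppose $A \desc B$, meaning $A$ lies entirely inside the region bounded by $B$. Then geometrically $\ell_B \leq \ell_A$, $r_A \leq r_B$, $b_B \leq b_A$, $t_A \leq t_B$, and condition (ii) of Definition~\ref{def:rfg} (corners of $A$ do not coincide with points of $B$) upgrades all four inequalities to strict ones. Subtracting gives $\score(A) = t_A - b_A < t_B - b_B = \score(B)$, and the right-coordinate comparison is $\antiscore(A) = r_A < r_B = \antiscore(B)$.

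Second case: suppose $A \adj B$, so $A$ enters $B$ in the unique configuration of Figure~\ref{pic:only-possible-rfg}. Then the right side of $B$ (the vertical segment from $(r_B, b_B)$ to $(r_B, t_B)$) crosses both the top and bottom horizontal sides of $A$. Reading the $y$-coordinates of these two crossings forces $b_B < b_A$ and $t_A < t_B$, again made strict by condition (ii); hence $\score(A) < \score(B)$. Reading the $x$-coordinate of the same picture, the right side of $A$ lies strictly to the right of the right side of $B$, so $r_A > r_B$, i.e.\ $\antiscore(A) > \antiscore(B)$.

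The only subtle point is justifying strictness, but this is exactly what condition (ii) of Definition~\ref{def:rfg} is for; with it, the proof is just a matter of reading coordinates off the two possible pictures.
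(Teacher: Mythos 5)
Your proof is correct and is exactly the computation the paper has in mind: the paper's entire proof of this lemma is ``Obvious from the definitions,'' and your coordinate unpacking of the two configurations (reading $b_B<b_A<t_A<t_B$ and $r_A<r_B$ from ``inside,'' and $b_B<b_A<t_A<t_B$ with $r_A>r_B$ from ``enters,'' with condition (ii) of Definition~\ref{def:rfg} supplying strictness) is the intended justification. Nothing further is needed.
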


            \begin{proof}
              Obvious from the definitions.
            \end{proof}

          \begin{lemma}
            \label{lem:Frames-are-Abstract-B}
            For every non-empty finite and strict set of frames $ F $,
            the triple $ (F, \desc, \adj) $ forms a Burling set.
          \end{lemma}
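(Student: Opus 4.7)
The plan is to verify, for $(F, \desc, \adj)$, the three conditions in Definition~\ref{def:BurlingSet} (strict partial order for $\desc$, no directed cycle for $\adj$, and the four axioms). The partial order property of $\desc$ (``entirely inside'') is immediate from elementary plane geometry of nested rectangles, and the fact that $\adj$ has no directed cycle follows directly from Lemma~\ref{l:scoreFrame}: along any $\adj$-arc the score strictly increases, so a cycle would give $\score(A) < \score(A)$.

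For Axiom~\ref{item:descdesc}, assume $A \desc B$ and $A \desc C$, so $A \subset B^{\circ} \cap C^{\circ}$. If $B$ and $C$ intersected, condition~(v) of Definition~\ref{def:rfg} would prohibit a frame entirely contained in the intersection of their interior regions, contradicting $A \subset B^{\circ} \cap C^{\circ}$. So $B$ and $C$ are disjoint as curves but share interior points; the Jordan curve theorem then forces one to be nested inside the other. For Axiom~\ref{item:adjadj}, if $A \adj B$ and $A \adj C$ then $B \cap C = \varnothing$ (else $\{A,B,C\}$ would be a triangle, contradicting triangle-freeness, condition~(i) of Definition~\ref{def:rfg}); an inspection of the configuration in Figure~\ref{pic:only-possible-rfg} shows that $B^{\circ}$ and $C^{\circ}$ must nevertheless share a region (both contain a small neighborhood on one side of $A$ near the common piercing), so again one of them is nested inside the other by the Jordan curve theorem.

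Axiom~\ref{item:adjdesc} uses the strict condition for the first time. Given $A \adj B$ and $A \desc C$, the intersection points of $A$ with $B$ lie on $A \subset C^{\circ}$, so $B$ meets $C^{\circ}$. If $B$ also met $C$, then since $A \desc C$, Definition~\ref{def:strictfg} would force $B$ to enter both $A$ and $C$; but the geometry of ``entering'' (Figure~\ref{pic:only-possible-rfg}) rules out both $A \adj B$ and $B \adj A$ simultaneously, contradiction. So $B \cap C = \varnothing$, and since $B$ is connected and meets $C^{\circ}$, we get $B \subset C^{\circ}$, i.e., $B \desc C$. For Axiom~\ref{item:transitiveboth}, if $A \adj B$ and $B \desc C$, then $A \cap B \subset B \subset C^{\circ}$, so $A$ meets $C^{\circ}$. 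Either $A \subset C^{\circ}$ (so $A \desc C$), or $A$ crosses $C$, in which case $A$ meets both $B$ and $C$; applying the strict condition to the nested pair $B \desc C$ yields $A \adj C$.

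The main obstacle is Axiom~\ref{item:adjadj}: axioms~\ref{item:adjdesc} and \ref{item:transitiveboth} follow relatively cleanly from the strict condition, and~\ref{item:descdesc} from the restricted-frame condition~(v), but Axiom~\ref{item:adjadj} requires the additional geometric observation that when $A$ is entered by two disjoint frames $B$ and $C$ the interiors $B^{\circ}$ and $C^{\circ}$ necessarily overlap. The cleanest way to establish this is to analyze the shape of the crossing at Figure~\ref{pic:only-possible-rfg} and to use Lemma~\ref{l:scoreFrame} to locate the rightmost vertical side of $B$ and $C$ both inside $A$'s horizontal range, so that the two interiors share a rectangular neighborhood.
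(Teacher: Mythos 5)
Your proof is correct and follows essentially the same route as the paper: acyclicity of $\adj$ via Lemma~\ref{l:scoreFrame}, a one-line check that $\desc$ is a strict partial order, and the four axioms established from the same ingredients (condition~(v) of Definition~\ref{def:rfg} for Axiom~(i), triangle-freeness for Axiom~(ii), the strictness condition of Definition~\ref{def:strictfg} for Axioms~(iii) and~(iv)), with the shared key step that two disjoint frames with overlapping interiors must be nested. The only cosmetic difference is that you close out Axioms~(iii) and~(iv) by connectivity and the antisymmetry of ``enters'' where the paper instead uses the $\score$/$\antiscore$ inequalities of Lemma~\ref{l:scoreFrame} to exclude the unwanted alternatives; your justification of $B^{\circ}\cap C^{\circ}\neq\varnothing$ in Axiom~(ii) (both interiors contain the left side of $A$) supplies a detail the paper merely asserts.
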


          \begin{proof}
            First, $ \desc $ is obviously transitive and asymmetric,
            so it is a strict partial order. Moreover, by
            Lemma~\ref{l:scoreFrame}, the relation $ \adj $ cannot
            have any directed cycle. Now we prove that the four axioms
            hold. Now we prove that the four axioms of Burling sets
            hold.

            Axiom \ref{item:descdesc}: If $ A \desc B $ and
            $ A \desc C $, $ C\interior $ and $ B \interior $ both
            contain $ A $ and thus their intersection is not empty, so
            $ B $ and $ C $ are comparable. Now we cannot have
            $ B \adj C $ or $ C \adj B $, because it contradicts item
            (v) of Definition \ref{def:rfg}. Thus either $ B \desc C $
            or $ C \desc B $.

            Axiom \ref{item:adjadj}: If $ A \adj B $ and $ A \adj C $,
            then again
            $ B \interior \cap C \interior \neq \varnothing $, so
            $ B $ and $ C $ are comparable. But because $ F $ is
            triangle-free, we cannot have $B \adj C $ or $ C \adj B
            $. So either $ B \desc C $ or $ C \desc B $.
	
            Axiom \ref{item:adjdesc}: if $ A \adj B $ and
            $ A \desc C $, then by Lemma~\ref{l:scoreFrame},
            $\antiscore(B) < \antiscore(A) < \antiscore(C)$.  Since
            there are points of $ A \interior $ which are in both
            $ B \interior $ and $ C \interior $, $ B $ and $C $ are
            comparable. Since $\antiscore(B) < \antiscore(C)$, by
            Lemma~\ref{l:scoreFrame}, we cannot have $ B \adj C $ or
            $ C \desc B $. Moreover, $ C \adj B $ contradicts
            the restriction of Definition \ref{def:strictfg}. Thus
            $ B \desc C $.

            Axiom \ref{item:transitiveboth}: If $ A \adj B $ and
            $ B \desc C $, then by Lemma~\ref{l:scoreFrame},
            $\score(A) < \score(B) < \score(C)$.  There are points of
            $ A \interior $ which are inside $ C \interior $. So $ A $
            and $ C $ are comparable. Since $\score(A) < \score(C)$,
            by Lemma~\ref{l:scoreFrame}, we cannot have $ C \adj A $
            or $ C \desc A $. So, $ A \adj C $ or $ A \desc C $.
            \end{proof}

            \begin{lemma}
              \label{lem:Burling-is-sfg} Every Burling graph is a
              strict frame graph.
            \end{lemma}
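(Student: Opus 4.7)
The plan is to use Theorem~\ref{lem:abstractB=derived=Burling} to reduce to the case where $G$ is an induced subgraph of the graph fully derived from some Burling tree $(T, r, \lastBorn, \choosePath)$, and then apply Lemma~\ref{l:chooseT} so that I may further assume every non-leaf vertex of $T$ has exactly two children, $r \notin V(G)$, and no last-born of $T$ is in $V(G)$. I will construct a strict set of frames $\mathcal{F} = \{F_v : v \in V(T)\}$ whose intersection graph realizes the fully derived graph on $V(T)$; restricting $\mathcal{F}$ to $\{F_v : v \in V(G)\}$ then yields a strict set (strictness is clearly preserved under subsets) whose intersection graph is $G$.

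The frames are built recursively along $T$. For each $v \in V(T)$ I assign an axis-aligned rectangle $R_v$, and $F_v$ will be its boundary, so that: \textbf{(a)} $F_v$ is entirely inside $F_u$ iff $v$ is a proper descendant of $u$ in $T$; \textbf{(b)} $F_v$ enters $F_u$ iff $u \in \choosePath(v)$; \textbf{(c)} no other pair of frames has nonempty intersection. Starting with a large rectangle $R_r$, I proceed top-down: having placed $F_u$, I draw $F_{\lastBorn(u)}$ as a proper sub-rectangle strictly inside $R_u$, reserving a ``corridor'' to its right, and I draw each non-last-born child $v$ of $u$ as a tall thin vertical frame placed in that corridor, positioned so that $F_v$ pierces exactly the frames indexed by the vertices of $\choosePath(v)$ (which, by definition, form a branch starting at $\lastBorn(u)$). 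Widths, heights and $x$-coordinates are controlled by the functions $\score$ and $\antiscore$ of Lemma~\ref{l:scoreDerived} so that Lemma~\ref{l:scoreFrame} holds by construction.

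Once (a)--(c) are verified, strictness follows from the axioms of the Burling set $(V(T), \desc, \adj)$ associated with $T$ (as checked after Definition~\ref{def:BurlingSet}). Indeed, suppose $A$ is entirely inside $B$ and some frame $C$ intersects both; then $A \desc B$, and the restricted property implies that either $C \adj A$ or $A \adj C$. Axiom~\ref{item:adjdesc} rules out $A \adj C$ since it would force $C \desc B$, in which case $C$ would be nested inside $B$ and the two frames would not intersect. Hence $C \adj A$, that is, $C$ enters $A$. Axiom~\ref{item:transitiveboth} applied to $C \adj A$ and $A \desc B$ then gives $C \adj B$ or $C \desc B$; the second case again contradicts the fact that $C$'s frame intersects $B$'s frame, so $C \adj B$, meaning $C$ enters $B$, as required.

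The main obstacle is engineering the explicit geometric placement so that the intersection pattern realizes \emph{precisely} the relation $\adj$, with no spurious intersections. This requires that the thin vertical frame $F_v$ of each non-last-born vertex $v$ span exactly the branch $\choosePath(v)$ and no further, while respecting the nesting given by $\desc$ for all ancestor frames. A careful inductive scheme, allocating each newly introduced non-last-born frame a fresh vertical slot further to the right (following the order given by $\antiscore$) and extending it vertically through exactly the intended nested rectangles (following $\score$), accomplishes this. With that geometric bookkeeping in place, properties (a)--(c) are immediate, and the strictness argument above concludes the proof.
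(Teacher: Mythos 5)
Your proof takes a genuinely different route from the paper. The paper's own proof is short and citation-based: it invokes Theorem~\ref{thm:B=D} to place $G$ inside some $G_k$ of the Burling sequence, points to the existing frame representations of the Burling sequence in~\cite{Pawlik2012Jctb} and~\cite{Chalopin2014}, and asserts that one can check that those constructions never create the forbidden configuration of Definition~\ref{def:strictfg} and that the property survives restriction to induced subgraphs. You instead build the frames directly from a normalized Burling tree (via Lemma~\ref{l:chooseT}) so that nesting realizes $\desc$ and entering realizes $\adj$, and then \emph{deduce} strictness from the axioms of the associated Burling set: your chain $A\desc B$, hence $C\adj A$ (Axiom~\ref{item:adjdesc} excluding $A\adj C$), hence $C\adj B$ (Axiom~\ref{item:transitiveboth} excluding $C\desc B$) is correct and is arguably cleaner and more self-contained than the paper's ``it is easy to check''. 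What the paper's approach buys is brevity, by outsourcing all geometry to~\cite{Pawlik2012Jctb}; what yours buys is independence from those references and a conceptual explanation of \emph{why} strictness must hold for any embedding satisfying your properties (a)--(c).

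The one place where you should be more careful is the construction of (a)--(c) itself, which is the technical heart and which you defer to ``geometric bookkeeping''. As literally described, the top-down order does not quite work: when you place the thin frame $F_v$ for a non-last-born child $v$ of $u$, the set $\choosePath(v)$ is a branch descending from $\lastBorn(u)$ into a subtree whose frames have not yet been drawn, so you cannot at that moment ``position $F_v$ so that it pierces exactly the frames of $\choosePath(v)$''. You need either a bottom-up construction, or an advance reservation scheme in which each rectangle sets aside disjoint vertical strips for the right sides of the probes that will later have to pierce it (this is essentially what the constructions in~\cite{Pawlik2012Jctb} and~\cite{Chalopin2014} do). Since the paper's proof leans on exactly those published constructions for the same step, this is not a fatal gap, but your write-up should either carry out the reservation argument or explicitly cite the existing embeddings for it.
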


            \begin{proof}
              If $ G $ is a derived graph, then by theorem
              \ref{thm:B=D} it is an induced subgraph of a graph
              $ G_k $ in the Burling sequence. It is easy to check
              that in the geometrical representation of the Burling
              sequence in~\cite{DBLP:journals/dcg/PawlikKKLMTW13} one
              never creates the forbidden structure of Definition
              \ref{def:strictfg}.  One can see the construction of the
              graphs in the Burling sequence as restricted frame
              graphs
              in~\cite{DBLP:journals/dcg/PawlikKKLMTW13,Chalopin2014},
              and check that in their construction, the forbidden
              constraint of Definition~\ref{def:strictfg} does not
              happen. Moreover, we notice restriction of frames to an
              induced subgraph, does not create any of the forbidden
              constraints.
            \end{proof}

            \begin{theorem} \label{thm:strict-frames=BG}
              The class of strict frame graphs is equal to the class
              of Burling graphs.
            \end{theorem}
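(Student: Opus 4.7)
The plan is to prove the two inclusions separately, reducing everything to lemmas already established. The inclusion that every Burling graph is a strict frame graph is precisely Lemma~\ref{lem:Burling-is-sfg}, so nothing more needs to be done in that direction.

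For the converse, I would take a strict frame graph $G$ realized by a non-empty finite strict set of frames $F$ in the plane, and aim to show $G$ is an abstract Burling graph, so that Theorem~\ref{lem:abstractB=derived=Burling} finishes the job. By Lemma~\ref{lem:Frames-are-Abstract-B}, the triple $(F, \desc, \adj)$ (with $A \desc B$ meaning $A$ is entirely inside $B$ and $A \adj B$ meaning $A$ enters $B$) is already a Burling set.

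The remaining step is to check that the abstract Burling graph $\Gr(F)$ coincides with the (oriented) intersection graph of $F$. By construction of $\Gr(F)$, there is an arc $AB$ iff $A \adj B$, i.e.\ iff $A$ enters $B$, which matches the definition of the oriented strict frame graph. For the underlying undirected graph, it suffices to observe that in a strict (hence restricted) set of frames, two distinct frames intersect if and only if one enters the other: indeed, by Definition~\ref{def:rfg} the only possible configuration for two intersecting frames is the one depicted in Figure~\ref{pic:only-possible-rfg}. Hence $AB$ is an edge of $G$ iff $A \adj B$ or $B \adj A$, which is exactly the adjacency relation in the underlying graph of $\Gr(F)$. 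So $G = \Gr(F)$ is an abstract Burling graph, and by Theorem~\ref{lem:abstractB=derived=Burling} it is a Burling graph. For an induced subgraph of a strict frame graph, we simply restrict $F$ to the corresponding frames, which is still a strict set, and apply the same argument; alternatively, we invoke the fact that Burling graphs form a hereditary class.

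There is essentially no obstacle in this proof: all the substantive work has been done in Lemmas~\ref{lem:Frames-are-Abstract-B} and~\ref{lem:Burling-is-sfg} and in Theorem~\ref{lem:abstractB=derived=Burling}. The only point requiring a small care is the verification that the unoriented intersection graph of $F$ is literally the underlying graph of $\Gr(F)$, which relies on the restricted-frame geometry forbidding intersections other than the ``enters'' configuration.
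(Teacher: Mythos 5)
Your proposal is correct and follows essentially the same route as the paper: one inclusion is Lemma~\ref{lem:Burling-is-sfg}, and the other uses Lemma~\ref{lem:Frames-are-Abstract-B} together with Theorem~\ref{lem:abstractB=derived=Burling}. The only difference is that you explicitly verify that the intersection graph of the frame set is the underlying graph of $\Gr(F)$, a point the paper leaves implicit; this is a reasonable bit of extra care, not a divergence.
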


            \begin{proof}
              By Lemma \ref{lem:Frames-are-Abstract-B}, every frame
              graph, is the underlying graph of an abstract Burling
              graph, and therefore by Theorems
              \ref{lem:abstractB=derived=Burling}, a Burling
              graphs. This, along with Lemma \ref{lem:Burling-is-sfg},
              completes the proof.
            \end{proof}

          \subsection*{Strict line segment graphs}

          Let $ l $ be a non-vertical line segment in $\mathbb{R}^2$. We
          can characterize $ l $ by $ y = ax +b $ for
          $ x \in [\alpha, \beta] $. The number $ a $ is the
          \textit{slope} of $ l $. We say that $ l $ has
          \emph{positive slope} if $ a $ is a finite positive number
          (in which case $l$ is neither horizontal nor vertical). We
          denote the interval $ [\alpha, \beta] $ by $ \Xspan (l) $,
          and the interval
          $ [ a\alpha +b, a \beta +b ] $ by
          $ \Yspan (l)$. Finally, for $ l $ with positive slope, we
          define the \textit{territory} of $ l $ to be the
          unbounded polyhedron defined by
          $ y \geq ax +b $ and $ y  \in \Yspan (l) $.
          We denote the territory of $ l $ by $ \mathcal T(l) $. See
          Figure~\ref{territory}.

          \begin{figure}[h!]
            \centering 
            \vspace*{-1.7cm}
            \includegraphics[width=7cm]{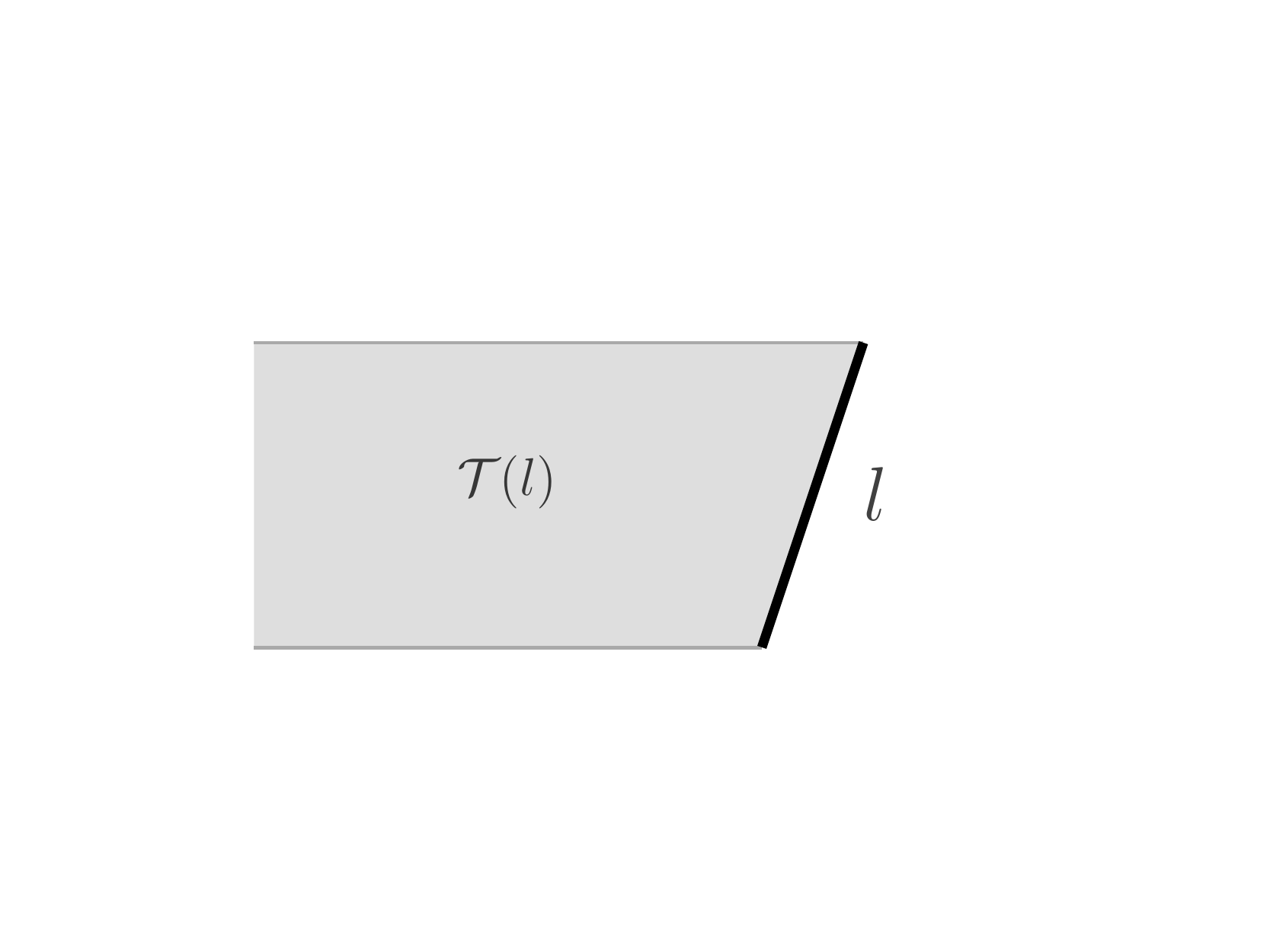}
            \vspace*{-1.7cm}
            \caption{The gray area is the territory of segment $ l
              $.} \label{territory}
          \end{figure}

          \begin{definition} \label{def:slsg} Let $ L $ be a finite
            set of line segments in $\mathbb{R}^2$. We call $ L $ a
            \emph{strict} set of line segments if the following hold:
            \begin{enumerate}[label=(\roman*)]
            \item all the segments in $ L $ have positive slopes,
            \item no end-point of any line segment lies on another
              line segment,
            \item there exist no three pairwise intersecting
              line segments in $L $ (in other words, the intersection
              graph of $L$ is triangle-free),
            \item for any two non-intersecting line segments $ l $ and
              $ m $, if there exists a point $ p $ of $ m $ such that
              $ p \in \mathcal T(l) $, then $ m $ is entirely in
              $ T(l) $ and $\Yspan(m) \subsetneq \Yspan(l)$,
            \item if $ l $ and $ k$ are two intersecting segments,
              then there are no segments entirely inside
              $ \mathcal T (l) \cap \mathcal T (k)$,
            \item If $ k $ and $l $ are two intersecting segments such
              that the slope of $ k $ is less than the slope of $ l $,
              then $ \Yspan (k) \subsetneq \Yspan (l) $ and
              $ \Xspan (l) \subsetneq \Xspan (k) $ such that the maximum 
              of $\Xspan (l) $ is strictly less than the maximum of $ \Xspan (k) $. 
              See Figure~\ref{two-inters-segments},
              \begin{figure}[h!]
                \centering
                \vspace*{-1cm}
                \includegraphics[width=7cm]{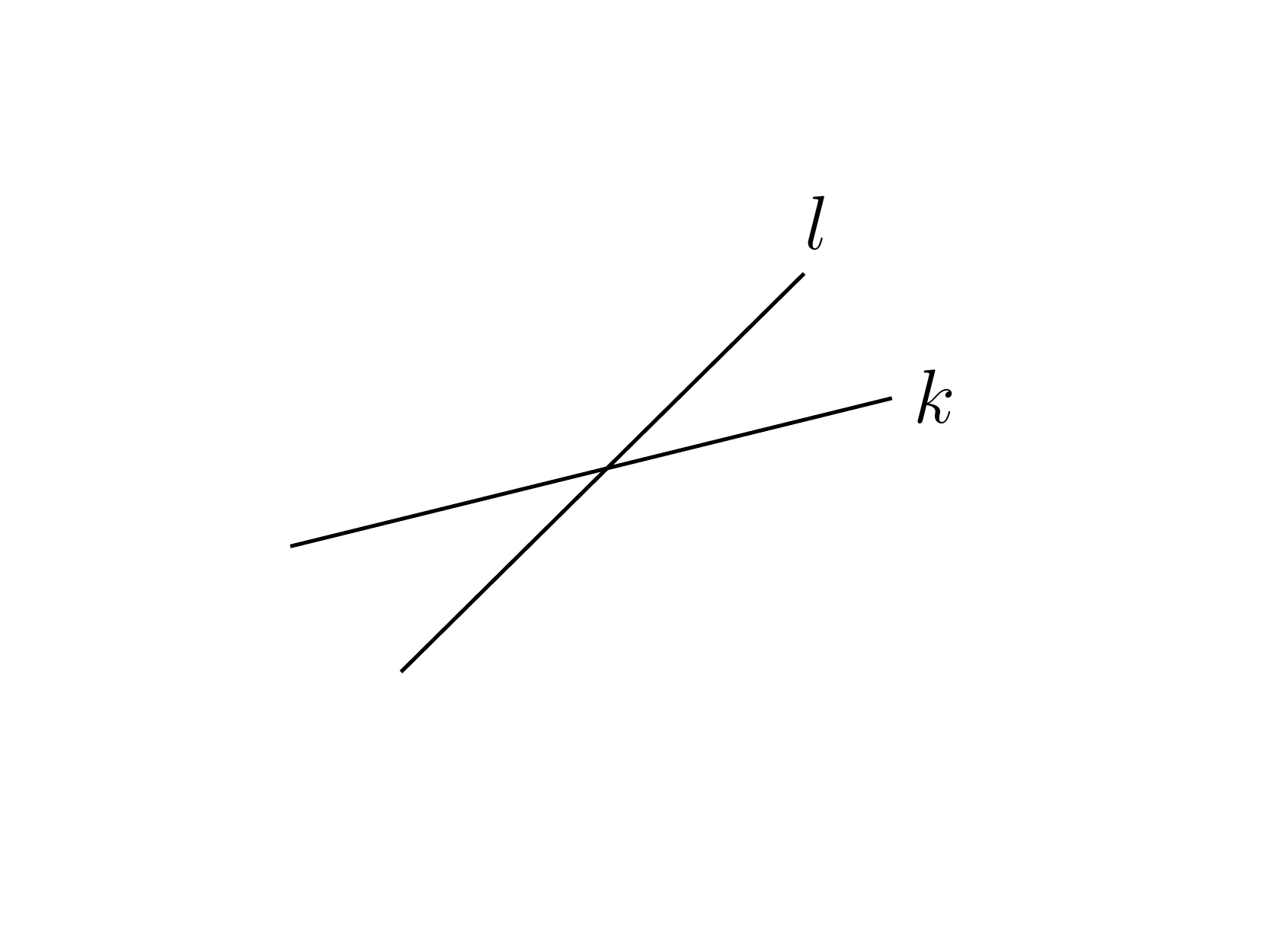}
                \vspace*{-1.5cm}
                \caption{two intersecting segments}
                \label{two-inters-segments}
              \end{figure}

            \item for any two non-intersecting line segments $ l $ and
              $ m $ such that one is in the territory of the other, if
              a line segment $ k $ intersects both of them, then the
              slope of $ k $ is strictly less than both the slope of
              $l $ and the slop of $ m$, as illustrated in Figure
              \ref{fig:forbidden-line-segment}.
              \begin{figure}[h!]
                \vspace*{-1cm} \centering
                \includegraphics[width=9cm]{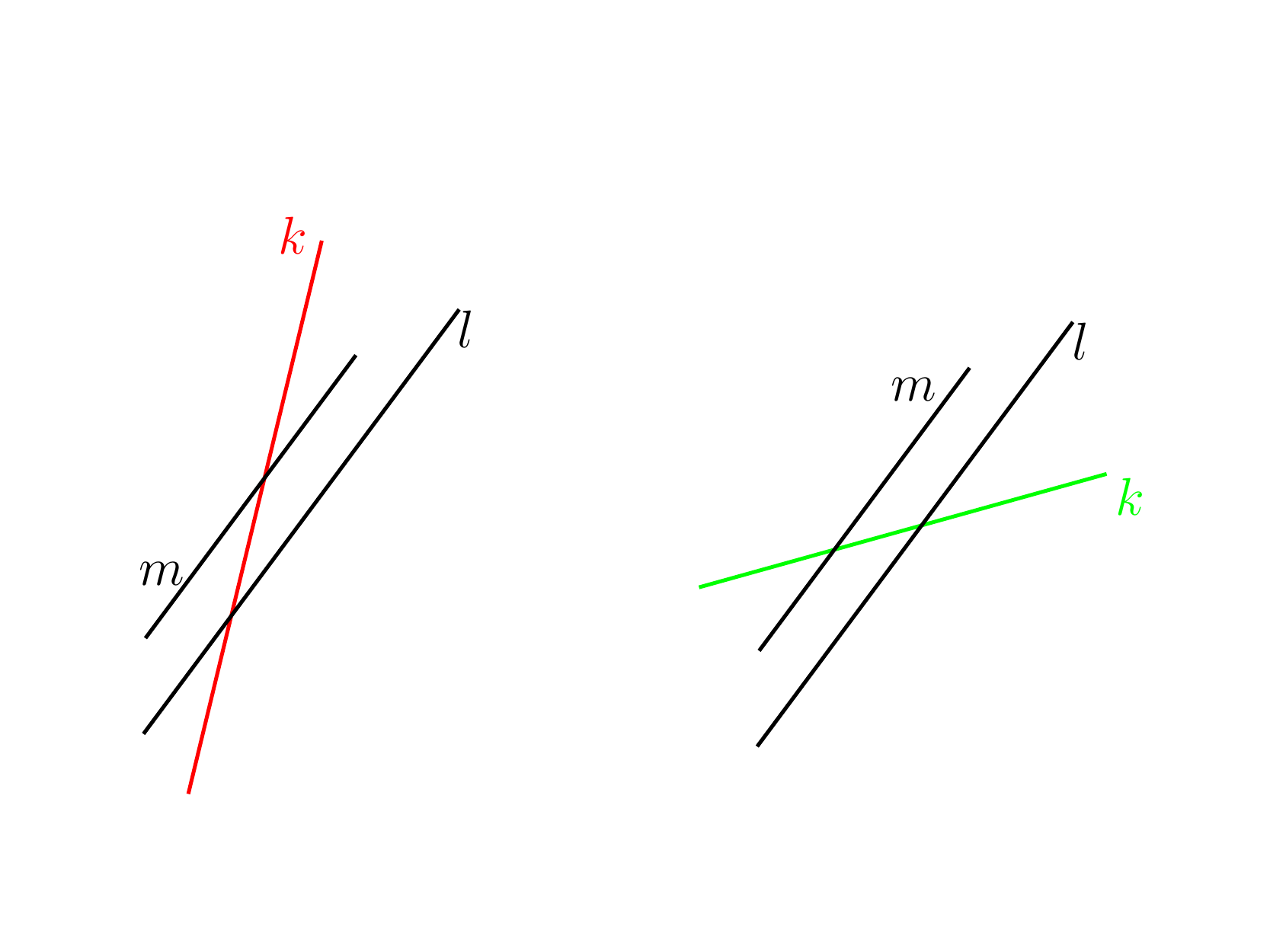}
                \vspace*{-1.2cm}
                \caption{Left: forbidden and right: allowed structure
                  in constraint
                  (vii).} \label{fig:forbidden-line-segment}
              \end{figure}
            \end{enumerate}
          \end{definition}

          Let $ L $ be a strict finite set of line segments
          in the plane. Define $ l \desc k $ if and only if $ l $ is
          in the territory of $ k $, and define $ l \adj k$ if and
          only if $ l $ and $ k $ have non-empty intersection and the
          slope of $ l $ is less than the slope of $ k $.

          Note that by Constraint (ii) of Definition \ref{def:slsg},
          intersecting line segments must have distinct slopes.

          \begin{lemma}
            \label{l:remark}
            If for two non-intersecting line segments $ l $ and $ m $
            we have
            $ \mathcal T (l) \cap \mathcal T (m) \neq \varnothing $,
            then $ \mathcal T (l) \subseteq \mathcal T (m)$ or
            $ \mathcal T (m) \subseteq \mathcal T (l)$.
          \end{lemma}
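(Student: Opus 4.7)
}

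The plan is to exploit the horizontal strip structure of a territory. Recall that $\mathcal{T}(l)$ consists of all points $(x,y)$ with $y \in \Yspan(l)$ and $x \leq (y-b_l)/a_l$, where $l$ is given by $y=a_l x+b_l$ on $\Xspan(l)$; so for each $y \in \Yspan(l)$ the horizontal cross-section of $\mathcal{T}(l)$ is the left-unbounded ray $(-\infty, (y-b_l)/a_l]$, whose right endpoint is exactly the point of $l$ at height $y$. I will first produce a point of one segment lying inside the territory of the other, then use Constraint~(iv) of Definition~\ref{def:slsg} to promote this to a full inclusion of the segment, and finally propagate the inclusion to territories by a one-line calculation.

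First I would pick $(x^*,y^*) \in \mathcal{T}(l) \cap \mathcal{T}(m)$. Since $y^* \in \Yspan(l) \cap \Yspan(m)$, both segments meet the horizontal line $y=y^*$: let $p_l = ((y^*-b_l)/a_l, y^*) \in l$ and $p_m = ((y^*-b_m)/a_m, y^*) \in m$. Without loss of generality $p_l$ lies weakly to the left of $p_m$, i.e.\ $(y^*-b_l)/a_l \leq (y^*-b_m)/a_m$; this says exactly that $p_l \in \mathcal{T}(m)$, so $p_l \in l \cap \mathcal{T}(m)$. Since $l$ and $m$ do not intersect, Constraint~(iv) applied to the pair $(m,l)$ gives $l \subseteq \mathcal{T}(m)$ and $\Yspan(l) \subsetneq \Yspan(m)$.

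It remains to prove $\mathcal{T}(l) \subseteq \mathcal{T}(m)$. Let $(x,y) \in \mathcal{T}(l)$ be arbitrary. Then $y \in \Yspan(l) \subseteq \Yspan(m)$, so the required $y$-condition holds. The point $q = ((y-b_l)/a_l, y)$ lies on $l$ (since $y \in \Yspan(l)$), hence belongs to $\mathcal{T}(m)$ by the previous paragraph; reading off its horizontal coordinate gives $(y-b_l)/a_l \leq (y-b_m)/a_m$. Combining with $x \leq (y-b_l)/a_l$ (which is just $(x,y) \in \mathcal{T}(l)$) yields $x \leq (y-b_m)/a_m$, that is, $(x,y) \in \mathcal{T}(m)$.

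The main obstacle, such as it is, is really only a notational one: one has to recognise that Constraint~(iv) is tailor-made to convert ``a single point of $l$ lying in $\mathcal{T}(m)$'' into the much stronger ``all of $l$ lies in $\mathcal{T}(m)$ and $\Yspan(l) \subsetneq \Yspan(m)$'', and that, once this is in hand, territory inclusion is forced by the elementary fact that the right boundary of a horizontal cross-section of $\mathcal{T}(l)$ is the point of $l$ at that height. No use of Constraints~(v), (vi) or~(vii) should be needed.
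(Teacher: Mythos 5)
Your proof is correct and follows essentially the same route as the paper's: use the non-empty intersection of $\Yspan(l)$ and $\Yspan(m)$ to find a point of one segment in the territory of the other, then invoke Constraint~(iv) of Definition~\ref{def:slsg}. You additionally spell out the final step (that $l \subseteq \mathcal{T}(m)$ together with $\Yspan(l) \subsetneq \Yspan(m)$ forces $\mathcal{T}(l) \subseteq \mathcal{T}(m)$), which the paper's proof leaves implicit, so your write-up is if anything slightly more complete.
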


          \begin{proof}
            Since
            $ \mathcal T (l) \cap \mathcal T (m) \neq \varnothing $,
            $ \Yspan (l) \cap \Yspan (m) \neq \varnothing $. Thus
            necessarily one of them has some points inside the
            territory of the other, and thus by Constraint (iv), is
            entirely inside the territory of the other.
          \end{proof}
          
          We denote by $\score(l)$ the length of the interval
          $\Yspan(l)$ and by $\antiscore(l)$ the maximum real number
          $x$ such that $x$ is the $x$-coordinate of a point of $l$, i.e.\ the maximum of $\Xspan(l)$.

          \begin{lemma}
            \label{l:scoreLine}
            If $l \desc k$, then $\score(l) < \score(k)$ and
            $\antiscore(l) < \antiscore(k)$.  If $l \adj k$, then
            $\score(l) < \score(k)$ and
            $\antiscore(l) > \antiscore(k)$.
          \end{lemma}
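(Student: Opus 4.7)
The plan is to handle the two cases of the lemma separately, with each case reducing to a direct application of one of the constraints in Definition~\ref{def:slsg}, plus a short additional argument to upgrade a weak inequality to a strict one.

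The case $l \adj k$ is immediate from Constraint~(vi). By the definition of $\adj$, the segments $l$ and $k$ intersect and the slope of $l$ is strictly smaller than that of $k$, so Constraint~(vi) applies with the roles of its $k$ and $l$ played by our $l$ and $k$ respectively. It yields $\Yspan(l) \subsetneq \Yspan(k)$ and $\max \Xspan(k) < \max \Xspan(l)$. Proper inclusion of intervals forces a strict inequality of lengths, giving $\score(l) < \score(k)$, while the second inequality is exactly $\antiscore(l) > \antiscore(k)$.

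For the case $l \desc k$, i.e.\ $l \subseteq \mathcal{T}(k)$, I would first observe that $l$ and $k$ must be disjoint. Intersecting segments have distinct slopes (as noted immediately after Definition~\ref{def:slsg}), and by Constraint~(ii) no endpoint of either lies on the other; hence, were they to meet, $l$ would cross the supporting line of $k$ transversally and have points strictly below that line, contradicting $l \subseteq \mathcal{T}(k)$. With $l$ and $k$ disjoint, Constraint~(iv) applies and gives $\Yspan(l) \subsetneq \Yspan(k)$, whence $\score(l) < \score(k)$.

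It remains to prove $\antiscore(l) < \antiscore(k)$; this is the only place where a small additional argument is needed, and I expect it to be the main (if modest) obstacle. Let $e_l := (\antiscore(l), \max \Yspan(l))$ denote the top endpoint of $l$, which is also its rightmost point since the slope is positive, and define $e_k$ analogously. Writing the line supporting $k$ as $y = a_k x + b_k$ with $a_k > 0$, the membership $e_l \in \mathcal{T}(k)$ yields $\max \Yspan(l) \geq a_k \antiscore(l) + b_k$, while $\Yspan(l) \subseteq \Yspan(k)$ yields $\max \Yspan(l) \leq \max \Yspan(k) = a_k \antiscore(k) + b_k$. Combining these and using $a_k > 0$ gives $\antiscore(l) \leq \antiscore(k)$; in case of equality both intermediate inequalities must be equalities, which forces $e_l = e_k$ and hence places an endpoint of $l$ on $k$, contradicting Constraint~(ii).
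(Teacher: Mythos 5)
Your proof is correct and follows the same route as the paper, which simply cites Constraints (iv) and (vi) of Definition~\ref{def:slsg} for the two cases. You additionally supply the details the paper leaves implicit in the $l \desc k$ case (that the segments must be disjoint before Constraint (iv) applies, and the endpoint argument upgrading $\antiscore(l) \leq \antiscore(k)$ to a strict inequality via Constraint (ii)), and these details are sound.
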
  

          \begin{proof}
            If $l \desc k$, then $\score(l) < \score(k)$ and
            $\antiscore(l) < \antiscore(k)$ because $ l $ is in the
            territory of $ k $. The inequalities are strict because of
            Constraint~(iv) of Definition~\ref{def:slsg}.

            If $l \adj k$, then $\score(l) < \score(k)$ and
            $\antiscore(l) > \antiscore(k)$ because of Constraint~(vi)
            of Definition~\ref{def:slsg}.
          \end{proof}

          \begin{lemma}
            \label{lem:L-is-BSet}
            $ (L, \desc, \adj) $ forms a Burling set.
          \end{lemma}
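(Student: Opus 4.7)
The plan is to follow closely the template of Lemma~\ref{lem:Frames-are-Abstract-B}, leveraging the monotonicity of $\score$ and $\antiscore$ from Lemma~\ref{l:scoreLine} and the territory comparability from Lemma~\ref{l:remark}. First I would verify the preliminaries: the relation $\desc$ is transitive (inclusion of territories is transitive) and asymmetric (by Lemma~\ref{l:scoreLine}, $l \desc k$ forces $\score(l) < \score(k)$), so it is a strict partial order; the relation $\adj$ has no directed cycle because, again by Lemma~\ref{l:scoreLine}, $l \adj k$ strictly increases $\score$.

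For Axiom~\ref{item:descdesc}, if $l \desc k$ and $l \desc m$ then $l \subseteq \mathcal{T}(k) \cap \mathcal{T}(m)$, so Constraint~(v) of Definition~\ref{def:slsg} forces $k \cap m = \varnothing$, and Lemma~\ref{l:remark} then yields, up to symmetry, $\mathcal{T}(k) \subseteq \mathcal{T}(m)$, i.e.\ $k \desc m$. For Axiom~\ref{item:adjadj}, triangle-freeness gives $k \cap m = \varnothing$, and Constraint~(vi) gives $\Yspan(l) \subsetneq \Yspan(k) \cap \Yspan(m)$, so $\mathcal{T}(k) \cap \mathcal{T}(m) \neq \varnothing$ and Lemma~\ref{l:remark} again provides comparability. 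For Axiom~\ref{item:transitiveboth}, the intersection point of $l$ and $k$ lies in $\mathcal{T}(m)$, so either $l \cap m = \varnothing$ and Constraint~(iv) gives $l \desc m$, or $l$ and $m$ intersect and the score chain $\score(l) < \score(k) < \score(m)$ rules out $m \adj l$ via Lemma~\ref{l:scoreLine}, forcing $l \adj m$.

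The step I expect to be the main obstacle is Axiom~\ref{item:adjdesc}: unlike the frame case, where the corresponding axiom was almost immediate from the ``entering'' restriction, here I would first need to rule out that $k$ intersects $m$. For this I would apply Constraint~(vii) to the non-intersecting pair $l, m$ with $l \in \mathcal{T}(m)$: any segment meeting both of them would have slope strictly less than the slope of $l$, contradicting $l \adj k$. Once $k \cap m = \varnothing$ is in hand, the intersection point of $l$ and $k$ witnesses that $k$ meets $\mathcal{T}(m)$, and Constraint~(iv) upgrades this to $k \subseteq \mathcal{T}(m)$, giving $k \desc m$. This is the one place where the slope-based Constraint~(vii), specifically crafted to forbid a ``crossing'' configuration that does not arise for frames, plays an essential role beyond the direct analogues of the frame axioms.
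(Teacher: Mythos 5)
Your proof is correct and follows essentially the same route as the paper: establish that $\desc$ is a strict partial order and that $\adj$ is acyclic via Lemma~\ref{l:scoreLine}, then verify the four axioms using Constraints (iii)--(vii) of Definition~\ref{def:slsg} together with Lemmas~\ref{l:remark} and~\ref{l:scoreLine}. The only differences are cosmetic (for Axiom~\ref{item:adjadj} you argue via overlap of the $\Yspan$'s rather than the leftmost point of the common neighbour, and for Axiom~\ref{item:adjdesc} you invoke Constraint~(vii) directly instead of first deriving the adjacency via $\antiscore$), and both variants are sound.
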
  

\begin{proof}
  First, $ \desc $ is obviously transitive and asymmetric, so it is a
  strict partial order. Moreover, by Lemma~\ref{l:scoreLine}, the
  relation $ \adj $ cannot have any directed cycle. Now we prove that the four
  axioms of Burling sets hold.

  Axiom \ref{item:descdesc}: If $ k \desc l $ and $ k \desc m $, then
  because of Constraint (v) of Definition~\ref{def:slsg}, $ l $ and
  $ m $ do not intersect. Moreover, because $ k $ is inside the
  territory of both $ l $ and $ m $, then by  Lemma~\ref{l:remark}, one of
  them is inside the territory of the other.
	
  Axiom \ref{item:adjadj}: If $ k \adj l$ and $ k \adj m $, then by
  Constraint (iii), $ m $ and $ l $ do not intersect. Moreover, notice
  that by Constraint (vi), the leftmost point of $ k $ (the lower
  endpoint of $ k $) is inside the territory of both $ l $ and $m
  $. Thus by Lemma~\ref{l:remark}, one of them is inside the territory of
  the other.
	
  Axiom \ref{item:adjdesc}: If $ k \adj l $ and $ k \desc m $, then by
  Lemma~\ref{l:scoreLine},
  $\antiscore(l) < \antiscore(k) < \antiscore(m)$.  So, if $l$ and $m$
  intersect, then by Lemma~\ref{l:scoreLine} again, $ m \adj l $.  So,
  $k$, $l$ and $m$ contradict Constraint (vii).  Hence, $ l $ and
  $ m $ do not intersect. So, the segment $ k $ and thus the
  intersection of $ k $ and $ l $ is in the territory of $ m $, so by
  property (iv), $ l $ is in the territory of $m $, i.e.\
  $ l \desc m $.
	
  Axiom \ref{item:transitiveboth}: If $ k \adj l $ and $ l \desc m $,
  then two cases are possible. Case 1: $ k $ and $ m $ do not
  intersect. Let $ p = (x, y ) $ denote the intersection point of
  $ k $ and $ l $. Because $ p $ is inside the territory of $ m $, by
  Constraint (iv), $ k \desc m $. Case 2: $ k $ and $ m $
  intersect. Then, by Lemma~\ref{l:scoreLine}, $s(k) < s(l) < s(m)$.
  So $ k \adj m $.
	
\end{proof}

A \emph{strict line segment graph} is the intersection graph of a strict set
of line segments in the plane.

\begin{lemma}
  \label{lem:Burling-is-slsg}
  Every Burling graph is a strict line segment graph.
\end{lemma}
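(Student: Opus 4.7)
The plan is to mirror exactly the strategy used in Lemma~\ref{lem:Burling-is-sfg} for strict frame graphs. First, by Theorem~\ref{thm:B=D} and the hereditary nature of strict line segment graphs (which follows from the fact that removing segments from a strict set trivially preserves all constraints of Definition~\ref{def:slsg}), it suffices to show that every graph $G_k$ of the Burling sequence is a strict line segment graph. I would then appeal to the explicit geometric construction of $G_k$ as a line segment graph given in~\cite{Pawlik2012Jctb}, and verify that this construction already satisfies each of the seven constraints of Definition~\ref{def:slsg}.

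In more detail, I would proceed by induction on $k$, following the inductive geometric construction of the Pawlik et al.\ drawing: each $G_{k+1}$ is built by taking several copies of $G_k$ (one main copy plus one per stable set of the privileged family), placing them in disjoint strips with appropriate relative slopes and positions, and then inserting the new ``probe'' segments that play the role of the vertices $v_{S,Q}$. The inductive invariant I would maintain is precisely the conjunction of constraints (i)--(vii), together with the auxiliary property that every segment in the construction has positive slope lying in a controlled range and that every copy of $G_k$ is entirely contained in the territory of a single outer segment of the previous stage. Constraints (i)--(iii) are straightforward: positive slopes are chosen from the start, endpoints are put in general position, and triangle-freeness of the Burling sequence is classical. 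Constraint (vi) is arranged by design of the Pawlik construction, since each time two segments are made to cross they do so with the lower-slope segment straddling the higher-slope one in the prescribed way.

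The main obstacles are the genuinely new strict constraints (iv), (v) and (vii). For (iv) one must check that whenever a segment of a new copy touches the territory of a segment of the surrounding structure, it is entirely contained in it with strictly smaller $\Yspan$; this is an immediate consequence of the nested-strip placement used in the induction. Constraint (v) --- no segment is entirely inside $\mathcal{T}(l)\cap\mathcal{T}(k)$ when $l$ and $k$ cross --- follows because in the Pawlik construction such a double territory is precisely a ``fresh'' region into which only the probe segment~$v_{S,Q}$ is placed, and the probe extends outside it. Constraint (vii) is the subtlest: it forbids a segment $k$ of slope larger than both $l$ and $m$ from intersecting two non-intersecting comparable segments; but in the construction the only segments that intersect several older segments are the new probe segments $v_{S,Q}$, which are introduced with the smallest slope at their stage, so their role is always the ``low slope'' role, never the high one.

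Finally, I would note that the restriction of a strict set of line segments to any induced subgraph remains strict --- deleting segments cannot create a forbidden configuration --- so the property passes from $G_k$ to an arbitrary Burling graph $H \subseteq G_k$. Combined with the reduction of the first paragraph, this yields the lemma. A fully rigorous verification of constraints (iv)--(vii) on the Pawlik construction is essentially a bookkeeping exercise; one could alternatively prove the lemma via the abstract Burling set framework of Section~\ref{sec:abs}, but since the geometric construction is already on record in~\cite{Pawlik2012Jctb}, the short route above parallels the proof of Lemma~\ref{lem:Burling-is-sfg} and is the most economical.
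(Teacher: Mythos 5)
Your proposal takes essentially the same route as the paper: the paper's proof simply cites the line-segment construction of \cite{Pawlik2012Jctb}, asserts that one can check all constraints of Definition~\ref{def:slsg} hold there, and concludes via the hereditary property of strict sets of segments. Your additional sketch of how constraints (iv)--(vii) are verified on the inductive construction is more detailed than what the paper writes, but it is the same argument.
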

\begin{proof}
  In \cite{Pawlik2012Jctb}, graphs of the Burling sequence as
  presented as line segment graphs. One can check easily that in this
  construction, all the constraints of Definition \ref{def:slsg}
  hold. Now, because every Burling graph is an induced subgraph of a
  graph in the Burling sequence, and because removing line segments
  from a strict set of line segments leaves a strict set of line
  segments, the proof is complete.
\end{proof}

\begin{theorem} \label{thm:strict-line-segments=BG}
  The class of strict line segment graphs is equal to the class of
  Burling graphs.
\end{theorem}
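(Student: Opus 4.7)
The plan is to prove the two inclusions separately. The inclusion of Burling graphs into strict line segment graphs is already established by Lemma~\ref{lem:Burling-is-slsg}, so the task reduces to proving the reverse inclusion.

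For the reverse inclusion, let $G$ be a strict line segment graph, realized as the intersection graph of a strict set of line segments $L$ in the plane. By Lemma~\ref{lem:L-is-BSet}, the triple $(L, \desc, \adj)$ forms a Burling set, where $\desc$ is the ``inside the territory'' relation and $\adj$ is defined by intersection together with strictly smaller slope. Consider the abstract Burling graph $\Gr(L)$: its vertex set is $L$ and $kl \in A(\Gr(L))$ iff $k \adj l$. By Theorem~\ref{lem:abstractB=derived=Burling}, $\Gr(L)$ is a (derived, hence) Burling graph.

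The remaining point is to check that the underlying graph of $\Gr(L)$ is precisely $G$. Two segments $k$ and $l$ are adjacent in $G$ exactly when they intersect; by Constraint~(ii) of Definition~\ref{def:slsg}, intersecting segments have distinct slopes, so they intersect iff one of $k \adj l$ or $l \adj k$ holds, i.e.\ iff $kl$ or $lk$ is an arc of $\Gr(L)$. Hence $G$ is the underlying graph of the oriented Burling graph $\Gr(L)$, and therefore $G$ is itself a Burling graph.

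I do not anticipate a real obstacle: all the technical content (checking the four Burling set axioms against the geometric constraints) has been absorbed into Lemma~\ref{lem:L-is-BSet}, and the equivalence with abstract Burling graphs has been established in Theorem~\ref{lem:abstractB=derived=Burling}. The only thing to be careful about is the slope-distinctness observation that justifies identifying intersection of segments with the symmetric closure of $\adj$; this uses Constraint~(ii) of Definition~\ref{def:slsg}.
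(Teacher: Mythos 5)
Your proposal is correct and follows the same route as the paper: one inclusion via Lemma~\ref{lem:Burling-is-slsg}, and the other by applying Lemma~\ref{lem:L-is-BSet} to view the segments as a Burling set and then invoking Theorem~\ref{lem:abstractB=derived=Burling}. Your extra remark that Constraint~(ii) of Definition~\ref{def:slsg} guarantees the underlying graph of $\Gr(L)$ is exactly the intersection graph is a welcome detail the paper leaves implicit.
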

      
      \begin{proof}
        By Lemma \ref{lem:L-is-BSet}, every line segment graph, is the
        underlying graph of an abstract Burling graph, and therefore
        by Theorems \ref{lem:abstractB=derived=Burling}, a Burling
        graph. This, along with Lemma \ref{lem:Burling-is-slsg},
        completes the proof.
\end{proof}

\subsection*{Strict box graphs}

Let $S$ be a strict set of frames in the $\mathbb R^2$.  Suppose that
to each frame $A\in S$ is associated a non-empty interval
$I_A$ of $\mathbb R$.  The set of intervals is \emph{compatible
  with $S$} if for all pairs $A, B \in S$ we have:

\begin{itemize}
\item if $A$ enters $B$, then $ I_B \subsetneq I_A$ and
\item if $A$ is inside $B$ then $I_A \cap I_B = \varnothing$.
\end{itemize}

Note that if $A$ and $B$ are incomparable, then there is no condition
on $I_A$ and $ I_B$ (in particular, their intersection can be empty or
not).

\begin{lemma}
  \label{l:interExist}
  For every finite strict set $S$ of frames in the plane, there exists
  a set of intervals compatible with $S$.
\end{lemma}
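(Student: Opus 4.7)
My plan is to reduce to the combinatorial Burling tree and then construct the intervals explicitly via a depth-first traversal. By Lemma~\ref{lem:Frames-are-Abstract-B}, $(S, \desc, \adj)$ is a Burling set, so by Lemma~\ref{lem:abstractB-is-derived} there is a Burling tree $(T, r, \lastBorn, \choosePath)$ with $S \subseteq V(T)$ such that, for distinct $A, B \in S$, $A \desc B$ iff $A$ is a proper descendant of $B$ in $T$, and $A \adj B$ iff $B \in \choosePath(A)$. Since in the geometric setting compatibility amounts exactly to ``$I_A \cap I_B = \varnothing$ whenever $A \desc B$'' and ``$I_B \subsetneq I_A$ whenever $A \adj B$'', it suffices to assign intervals to all $v \in V(T)$ satisfying these two relational conditions and then restrict the assignment to $S$.

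To build the assignment, I would perform a depth-first traversal of $T$ from $r$ in which, at each internal vertex $v$, the subtree rooted at $\lastBorn(v)$ is completed before any other child of $v$ is visited. Let $e(v)\in\{1,\dots,|V(T)|\}$ denote the resulting entry time. For each $v\in V(T)$ set
\[
I_v = \begin{cases} \bigl[\,e(v),\; e(v)+\tfrac{1}{3}\,\bigr] & \text{if $v$ is the root or a last-born,}\\[2pt] \bigl[\,e(\lastBorn(p(v))),\; e(v)-\tfrac{1}{3}\,\bigr] & \text{otherwise.}\end{cases}
\]

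The verification rests on two facts about this traversal. First, if $v$ is a proper descendant of $u$, then $e(v) \geq e(u)+1$, and when $v$ is non-last-born the left endpoint $e(\lastBorn(p(v)))$ also lies in $u$'s subtree, hence is at least $e(u)+1$; in both cases of the definition the right endpoint of $I_u$ is at most $e(u)+\tfrac{1}{3}$, strictly less than the left endpoint of $I_v$, so $I_u \cap I_v = \varnothing$. Second, if $w \in \choosePath(v)$ then $v$ is non-last-born and $w$ lies in the subtree of $\lastBorn(p(v))$, a subtree entered at time $e(\lastBorn(p(v)))$ and traversed completely before $v$; thus $e(\lastBorn(p(v))) \le e(w) < e(v)$, and a short case split on whether $w$ is last-born shows $I_w \subseteq I_v$, with strictness at the left endpoint when $w \neq \lastBorn(p(v))$ (since then $e(\lastBorn(p(w))) > e(\lastBorn(p(v)))$) and at the right endpoint when $w = \lastBorn(p(v))$.

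The main obstacle is securing strict containment $I_w \subsetneq I_v$ precisely in the corner case $w = \lastBorn(p(v))$, where $I_v$ and $I_w$ share a left endpoint; the buffer $\tfrac{1}{3}$ between consecutive integer entry times is chosen so that both this strictness and the disjointness of descendant intervals hold at once. Once the two facts above are established, restricting the family $\{I_v\}_{v\in V(T)}$ to indices in $S$ yields the required compatible set of intervals.
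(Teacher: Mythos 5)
Your proof is correct and follows essentially the same route as the paper: reduce to a Burling tree via Lemmas~\ref{lem:Frames-are-Abstract-B} and~\ref{lem:abstractB-is-derived}, run a DFS giving priority to last-borns, and read the intervals off the entry times, with $[e(\lastBorn(p(v))), \cdot]$ playing exactly the role of the paper's $[f(u), f(w)]$. The only difference is that the paper first normalizes the tree with Lemma~\ref{l:chooseT} so that no frame is a root or last-born, whereas you skip the normalization and absorb those cases with the $\tfrac{1}{3}$-buffers; both work.
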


\begin{proof}
  By lemma~\ref{lem:Frames-are-Abstract-B}, the intersection graph of
  $S$ is an abstract Burling graph. Hence by
  Lemma~\ref{lem:abstractB-is-derived}, $G$ can be derived from a Burling
  tree.  So, by Lemma~\ref{l:chooseT}, $G$ is isomorphic to a graph
  $H$ derived from a Burling tree $ (T, r, \lastBorn, \choosePath) $
  such that $ r $ is not in $V(H)$, every non-leaf vertex in $T$ has
  exactly two children, and no last-born of $T$ is in $V(G)$.  So,
  every frame $A$ of $S$ corresponds to a vertex $v_A\neq r$ of $H$
  that is not a last-born.  Moreover, $A$ is inside $B$ if and only
  if $v_A$ is a descendant of $v_B$ in $T$ and $A$ enters $B$ if and
  only if $v_B \in c(v_A)$.

   Hence, it is enough to prove that we may associate to every vertex $v$
   of $H$ an interval $I_z$ in such a way that for all
   vertices $u, v$ of $H$:

 \begin{itemize}
 \item if $u\in c(v)$, then
   $I_u \subsetneq  I_v$ and
 \item if $u$ is a proper descendant of $v$ then
   $I_u \cap I_v = \varnothing$.
 \end{itemize}

 We now define the intervals.  We first perform a DFS search of $T$,
 starting at the root and giving priority to the last-borns.  This
 defines an integer $f(v)$ for each vertex $v$ of $T$ satisfying
 $f(r) = 1$, and for every non-leaf vertex $v$ with last-born child
 $u$ and non-last-born child $w$, $f(u) = f(v)+1$ and
 $$f(w) = \max\{f(x): \text{ $x$ is a descendant of $u$}\} + 1.$$ See
 Figure~\ref{fig:inter}.

         \begin{figure}
          \centering

          \includegraphics[height=3.5cm]{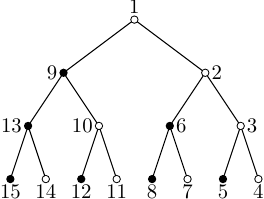} \hspace{1cm}
          \includegraphics[height=3.3cm]{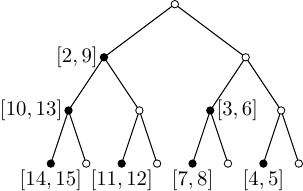}
          \caption{Intervals associated to the non-last-borns of a
            Burling tree\label{fig:inter}.} 
        \end{figure}

        Let $w$ be a vertex of $H$. So, $w$ is a vertex of $T$ that is
        neither $r$ nor a last-born. It follows that $u$ has a parent
        $v$ that has a last-born child $u$.  We associate to $w$ the
        interval $[f(u), f(w)]$ (note that $f(u) < f(w)$ since we
        apply DFS with priority to the last-borns).

        Suppose that $w$ is a proper descendant of $w'$, and their
        intervals are $[f(u), f(w)]$ and $[f(u'), f(w')]$ with
        notation as above.  In fact, both $u$ and $w$ are descendant
        of $w'$, so by the properties of DFS, $f(u) > f(w')$. This
        implies that $[f(u), f(w)]$ and $[f(u'), f(w')]$ are disjoint.

        Suppose that $w'\in c(w)$, and their intervals are
        $[f(u), f(w)]$ and $[f(u'), f(w')]$ with notation as above.
        Note that $u'$ and $w'$ are both descendants of $u$. So
        $f(u) < f(u') < f(w')$. And since $w'$ is a descendant of $u$,
        $f(w') < f(w)$.  Hence
        $[f(u'), f(w')] \subsetneq [f(u), f(w)]$.
      \end{proof}

      When an interval $I$ is associated to a frame $A$ of
      $\mathbb R^2$, there is a natural way to define an axis-align
      box of $\mathbb R^3$:
      $\{(x, y, z): (x, y)\in A \interior, z \in I\}$. This is the box
      \emph{associated} to $A$ and $I$.
 
 A set of axis-aligned boxes of $\mathbb R^3$ is \emph{strict} if it
 can be obtained from a strict set $S$ of frames by considering of set of
 intervals compatible with $S$, and by taking for each frame $A$ and
 each interval $I_A$ the box associated to $A$ and $I_A$.

 \begin{lemma}
   \label{2d3dEquiv}
   Suppose that a strict set $S'$ of boxes is obtained from a strict
   set of frames $S$.  Let $A, B \in S$ be frames, and $A', B'$ be the
   respective boxes associated to them.  Then
   $A\cap B \neq \varnothing$ if and only if
   $A' \cap B' \neq \varnothing$.  In particular, the intersection
   graph of $S$ is isomorphic to the intersection graph of $S'$.
 \end{lemma}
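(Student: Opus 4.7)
The plan is to prove the equivalence $A \cap B \neq \varnothing \iff A' \cap B' \neq \varnothing$ by a case analysis on the possible geometric relationship between $A$ and $B$ in the strict set of frames $S$. Recall that under Definition~\ref{def:rfg} and Definition~\ref{def:strictfg}, any two frames of $S$ fall into exactly one of three categories: (a) one enters the other, (b) one is entirely inside the other (and they are disjoint as subsets of $\mathbb{R}^2$), or (c) they are incomparable, i.e., $A\interior \cap B\interior = \varnothing$, in which case the frames themselves are disjoint.

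For the forward direction, suppose $A \cap B \neq \varnothing$. Then we are in case (a); without loss of generality $A$ enters $B$, i.e., $A \adj B$. By compatibility of the intervals, $I_B \subsetneq I_A$, so $I_A \cap I_B = I_B \neq \varnothing$. Moreover, because $A$ enters $B$ in the sense of Figure~\ref{pic:only-possible-rfg}, a non-empty portion of $A\interior$ lies in $B\interior$, so $A\interior \cap B\interior \neq \varnothing$. Picking $(x,y) \in A\interior \cap B\interior$ and $z \in I_A \cap I_B$ yields a point of $A' \cap B'$.

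For the converse, I argue the contrapositive. Suppose $A \cap B = \varnothing$, so we are in case (b) or case (c). In case (c), by the very definition of incomparability in $S$ we have $A\interior \cap B\interior = \varnothing$, so no point $(x,y,z)$ can lie in both $A'$ and $B'$. In case (b), say $A$ is inside $B$, i.e., $A \desc B$; the compatibility condition then forces $I_A \cap I_B = \varnothing$, so again $A' \cap B' = \varnothing$. Either way the boxes are disjoint.

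I expect no real obstacle here: the whole argument is essentially a check that the two bullet points defining ``compatible intervals'' have been engineered precisely to mirror the two ways frame intersection can fail (``inside'' kills the $z$-coordinate overlap) and to preserve the one way frames can meet (``enters'' yields nested intervals). The one subtlety is making sure the trichotomy of relationships between two frames in a strict set is genuinely exhaustive and mutually exclusive, but this follows directly from Definitions~\ref{def:rfg} and~\ref{def:strictfg}. The ``in particular'' statement is immediate: the assignment $A \mapsto A'$ is by construction a bijection $S \to S'$, and the equivalence above shows it preserves and reflects edges of the intersection graphs.
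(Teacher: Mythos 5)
Your proof is correct and follows essentially the same route as the paper: a three-way case analysis (enters / inside / incomparable) in which the two compatibility conditions on the intervals handle the "enters" and "inside" cases and disjointness of the enclosed regions handles the incomparable case. Your version is in fact slightly more careful than the paper's, since you explicitly produce a point of $A' \cap B'$ using $A\interior \cap B\interior$ rather than the frames themselves, which matches the definition of the associated boxes.
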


 \begin{proof}
   If $A$ enters $B$, then $A'\cap B'\neq \varnothing$ because both
   the frames and the interval associated to them have a non-empty
   intersection. If $A$ is inside $B$, then $A'\cap B' = \varnothing$
   because the intervals associated to $A$ and $B$ are disjoint.  If
   $A$ and $B$ are incomparable, then $A'\cap B' = \varnothing$
   because $A\interior \cap B\interior = \varnothing$.
 \end{proof}

A \emph{strict box graph} is the intersection graph of a strict set
 of boxes of $\mathbb R^3$.
 
 \begin{theorem}
   The class of strict box graphs is equal to the class of Burling
   graphs.
 \end{theorem}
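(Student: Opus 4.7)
The plan is to combine the three key lemmas already established in this subsection, together with the equivalence between Burling graphs and strict frame graphs (Theorem~\ref{thm:strict-frames=BG}), since the definition of strict box graph was specifically engineered so that these tools directly apply.

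First I would prove that every strict box graph is a Burling graph. Let $G$ be a strict box graph. By definition of \emph{strict set of boxes}, $G$ is the intersection graph of a set $S'$ of boxes obtained from some strict set of frames $S$ and a compatible family of intervals. Lemma~\ref{2d3dEquiv} then gives that the intersection graph of $S'$ is isomorphic to the intersection graph of $S$, so $G$ is a strict frame graph, and therefore a Burling graph by Theorem~\ref{thm:strict-frames=BG}.

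For the converse, let $G$ be a Burling graph. By Theorem~\ref{thm:strict-frames=BG}, $G$ is isomorphic to the intersection graph of some strict set of frames $S$. By Lemma~\ref{l:interExist}, $S$ admits a compatible family of intervals $(I_A)_{A \in S}$. Assembling the boxes associated to each pair $(A, I_A)$ produces, by the definition of strictness, a strict set $S'$ of boxes of $\mathbb{R}^3$. Lemma~\ref{2d3dEquiv} then guarantees that the intersection graph of $S'$ is isomorphic to the intersection graph of $S$, hence to $G$. So $G$ is a strict box graph.

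Since both directions are essentially immediate consequences of lemmas already stated in the excerpt, there is no real obstacle here; the only delicate point is to invoke the lemmas in the right order and to use the fact that the construction of the boxes from the frames and intervals is explicit and forces the strictness condition. The substantive work is hidden in Lemma~\ref{l:interExist}, where one had to actually produce the intervals using the DFS-based construction on the Burling tree.
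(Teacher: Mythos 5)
Your proposal is correct and follows exactly the paper's own argument: both directions are obtained by combining Theorem~\ref{thm:strict-frames=BG} with Lemma~\ref{l:interExist} and Lemma~\ref{2d3dEquiv}, just as the paper does. Nothing is missing.
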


 \begin{proof}
   Suppose that $G$ is a Burling graph.  Then, by
   Theorem~\ref{thm:strict-frames=BG}, $G$ is the intersection graph
   of a strict set $S$ of frames.  By Lemma~\ref{l:interExist}, a set
   of intervals compatible with $S$ exists. Hence, by
   Lemma~\ref{2d3dEquiv}, $G$ is isomorphic to a strict box graph.

   Suppose conversely that $G$ is a strict box graph. Then, by
   definition, it arises from a strict set of frames and a set of
   interval compatible with it.  Hence, by Lemma~\ref{2d3dEquiv}, $G$
   is isomorphic to a strict frame graph. So by
   Theorem~\ref{thm:strict-frames=BG}, $G$ is a Burling graph.
 \end{proof}
 
\section*{Acknowledgment}

Thanks to an anonymous referee for helping us to clarify the bibliography
and to Louis Esperet, Gwenaël Joret and Paul Meunier for
useful discussions.



\providecommand{\bysame}{\leavevmode\hbox to3em{\hrulefill}\thinspace}
\providecommand{\MR}{\relax\ifhmode\unskip\space\fi MR }
\providecommand{\MRhref}[2]{%
  \href{http://www.ams.org/mathscinet-getitem?mr=#1}{#2}
}
\providecommand{\href}[2]{#2}

\end{document}